\numberwithin{equation}{section}
\newtheorem{letterthm}{Theorem}
\newtheorem{lettercor}[letterthm]{Corollary}
\newtheorem{theorem}{Theorem}[section]
\newtheorem{lemma}[theorem]{Lemma}
\newtheorem{corollary}[theorem]{Corollary}
\newtheorem{proposition}[theorem]{Proposition}
\newtheorem{observation}[theorem]{Observation}
\theoremstyle{definition} 
\newtheorem{definition}[theorem]{Definition}
\newtheorem{notation}[theorem]{Notation}
\newtheorem{remark}[theorem]{Remark}
\newtheorem{example}[theorem]{Example}
\newcommand{\act}{\curvearrowright}
\newcommand{\cA}{\mathcal A}
\DeclareMathOperator{\atom}{atom}
\newcommand{\cC}{\mathcal C}
\newcommand{\C}{\mathbf C}
\DeclareMathOperator{\Comm}{Comm}
\DeclareMathOperator{\comp}{comp}
\DeclareMathOperator{\diff}{diff}
\newcommand{\cF}{\mathcal F}
\newcommand{\fH}{\mathfrak H}
\newcommand{\fHatom}{\fH_{\atom}}
\newcommand{\fHray}[1]{\fH_{\atom}^{[#1]}}
\newcommand{\scrH}{\mathscr H}
\DeclareMathOperator{\Hilb}{Hilb}
\DeclareMathOperator{\id}{id}
\DeclareMathOperator{\Ind}{Ind}
\DeclareMathOperator{\Irr}{Irr}
\newcommand{\into}{\hookrightarrow}
\newcommand{\fK}{\mathfrak K}
\newcommand{\scrK}{\mathscr K}
\DeclareMathOperator{\Leaf}{Leaf}
\DeclareMathOperator{\length}{length}
\DeclareMathOperator{\Mod}{Mod}
\newcommand{\N}{\mathbf{N}}
\newcommand{\NInd}{\text{Ind-mixing}}
\newcommand{\cO}{\mathcal O}
\newcommand{\ot}{\otimes}
\newcommand{\cP}{\mathcal P}
\DeclareMathOperator{\PSU}{PSU}
\newcommand{\R}{\mathbf{R}}
\DeclareMathOperator{\Rep}{Rep}
\DeclareMathOperator{\res}{res}
\DeclareMathOperator{\Root}{Root}
\newcommand{\cspan}{\overline{\textrm{span}}}
\DeclareMathOperator{\Span}{span}
\newcommand{\sub}[1]{\langle#1\rangle}
\newcommand{\cT}{\mathcal T}
\newcommand{\fT}{\mathfrak T}
\newcommand{\ti}{\tilde}
\newcommand{\cU}{\mathcal U}
\newcommand{\varep}{\varepsilon}
\DeclareMathOperator{\Ver}{Ver}
\newcommand{\fX}{\mathfrak X}
\newcommand{\scrX}{\mathscr X}
\newcommand{\Y}{\wedge}
\newcommand{\Z}{\mathbf{Z}}
\newcommand{\fZ}{\mathfrak Z}
\newcommand{\onto}{\twoheadrightarrow}
\DeclarePairedDelimiterX{\norm}[1]{\lVert}{\rVert}{#1}
\providecommand{\keywords}[1]{\textbf{\textit{Index terms---}} #1}
\begin{document}
	
\title[Atomic Pythagorean representations]
{Atomic representations of R.~Thompson's groups and Cuntz's algebra}
\thanks{
AB is supported by the Australian Research Council Grant DP200100067.\\
DW is supported by an Australian Government Research Training Program (RTP) Scholarship.}
\author{Arnaud Brothier and Dilshan Wijesena}
\address{Arnaud Brothier, Dilshan Wijesena\\ School of Mathematics and Statistics, University of New South Wales, Sydney NSW 2052, Australia}
\email{arnaud.brothier@gmail.com\endgraf
	\url{https://sites.google.com/site/arnaudbrothier/}}

\begin{abstract}
We continue to study Pythagorean unitary representation of Richard Thompson's groups $F,T,V$ and their extension to the Cuntz(--Dixmier) algebra $\cO$. Any linear isometry from a Hilbert space to its direct sum square produces such. We focus on those arising from a finite-dimensional Hilbert space.
We show that they decompose as a direct sum of a so-called diffuse part and an atomic part.
We previously proved that the diffuse part is Ind-mixing: it does not contain induced representations of finite-dimensional ones.
In this article, we fully describe the atomic part: it is a finite direct sum of irreducible monomial representations arising from a precise family of parabolic subgroups.
\end{abstract}

\maketitle

\keywords{{\bf Keywords:} Fraction groups, Jones' technology, unitary representations, Cuntz $C^*$-algebra}

%%%%%%%%%%%%%%%%INTRODUCTION%%%%%%%%%%%%%%%%%%%%%%%%%%%%%%%%%
%%%%%%%%%%%%%%%%%%%%%%%%%%%%%%%%%%%%%%%%%%%%%%%%%%%%%%%%%%

\section*{Introduction}
Richard Thompson's groups $F\subset T\subset V$ are fascinating groups which appear in various branches of mathematics, see \cite{Cannon-Floyd-Parry96}.
Groups are often understood via their actions.
Jones' technology offers a practical machinery to construct such by leveraging that $F,T,V$ are fraction groups of basic categories \cite{Jones17,Jones18a}. 
This approach has already been successfully applied for constructing actions on operator algebras and groups, and unitary representations \cite{Jones18b, Brothier-Stottmeister20,Brothier22,Brothier21,Jones21,Brothier-Jones19b}. 
Beyond producing actions of the Thompson groups we may use this technology to produce new knot invariants, obtain natural subgroups of the Thompson groups, and to study certain non-commutative probabilities \cite{Jones18a,Grymski-Peters22,Golan-Sapir17,Aiello-Nagnibeda23,Kostler-Krishnan-Wills20,Kostler-Krishnan22}.
Finally, this technology is useful for studying other Thompson-like groups built from categories \cite{Brothier21,Brothier23a,Brothier23b,Larsen23}.
We refer the reader to the recent surveys \cite{Jones19, Brothier20, Aiello22}.

%Pyth representations
The first author and Jones considered the following particular case of Jones' technology \cite{Brothier-Jones19a}: any linear isometry $R:\fH\to\fH\oplus\fH$, with $\fH$ a complex Hilbert space, permits to construct a unitary  representation $(\sigma^V,\scrH)$ of the largest Thompson group $V$. We write $\sigma^F,\sigma^T$ for the restrictions to the subgroup $F,T$, respectively. We call these {\it Pythagorean representations} (in short P-representations).
The isometric condition translates into
\begin{equation}\label{eq:Pyth}
	A^*A+B^*B=\id_\fH, \tag{PE}
\end{equation}
where $A,B\in B(\fH)$ are the legs of $R$.
We call $(A,B,\fH)$ a {\it Pythagorean module} (in short P-module). 
P-modules correspond to the representations of the {\it Pythagorean algebra} $\cP$: the universal $C^*$-algebra defined by Relation \ref{eq:Pyth}.
We consider maps between P-modules that only intertwinnes the $A$'s and $B$'s (but not necessarily their adjoints as one would classically require in $\Rep(\cP)$). 
This develops a highly non-trivial representation theory that allows us to perform powerful classifications of P-representations by solely working with P-modules. 
Beyond classification we are moreover able to read properties of P-representations by only studying the operators $A,B$.

By adding the relations $AA^*=BB^*=\id_\fH$ we obtain a quotient $\cP\onto\cO$ on the Cuntz(--Dixmier) algebra \cite{Dixmier64,Cuntz77}.
Surprisingly, any representation of $\cP$ canonically lifts into a representation $\sigma^\cO$ of $\cO$.
Moreover, $\sigma^\cO$ restricts into $\sigma^V$ (after identifying $V$ inside $\cO$ via the Birget--Nekrashevych embedding, see Section \ref{sec:cuntz-algebra} and \cite{Birget04,Nekrashevych04}).

%Finite-dimensional representation
In practice we mostly consider $\fH$ finite-dimensional, yet the space $\scrH$ on which $V$ and $\cO$ act is always infinite-dimensional and is roughly equal to trees with leaves decorated by vectors in $\fH$.
Thus, the force of this construction resides in constructing and studying representations of $F,T,V$ and $\cO$ using only finite-dimensional data. 

{\bf \underline{Brief outline of the article.}}
In this article we continue our systematic study of P-representations initiated in \cite{Brothier-Wijesena22}.
For improving the clarity of the exposition we restrict our study to {\it finite-dimensional} P-modules in this article. Several of our results extend to the infinite-dimensional case and will be proven in a future article.
We previously defined {\it diffuse} P-modules (i.e.~increasing words in $A,B$ tend to zero for the strong operator topology) and proved that the associated P-representation (also named {\it diffuse}) $\sigma^X$ of the Thompson groups $X=F,T,V$ are {\it Ind-mixing} (i.e.~$\Ind_H^X\theta\not\subset \sigma^X$ for all subgroups $H\subset X$ and finite-dimensional representations $\theta:H\to \cU(\C^d)$) \cite{Brothier-Wijesena22}.
In this present article we define a negation of being diffuse called {\it atomic}. 
We show that any P-representation $\sigma$ decomposes into $\sigma_{\atom}\oplus \sigma_{\diff}$ where $\sigma_{\atom},\sigma_{\diff}$ are themselves P-representations named the atomic and diffuse parts of $\sigma$, respectively. 
We then decompose $\sigma_{\atom}$ into explicit irreducible components. This is achieved by solely decomposing the underlying P-module.

{\bf \underline{Detailed content of the article and main results.}}
\begin{center}{\bf For the rest of the introduction all P-modules are finite-dimensional and all P-representations are built from finite-dimensional P-modules.}\end{center}

Decomposing and classifying P-modules is more much subtle than the usual representation theory of the $C^*$-algebra $\cP$.
However, this is a small cost to pay as it allows one to decompose P-representations of $F,T,V,\cO$ at the level of $\cP$.
A P-module $m=(A,B,\fH)$ does not decompose in general as a direct sum of irreducible components.
However, we have the (orthogonal) direct sum 
\[\fH = \fH_{\comp} \oplus \fH_{\res}\]
where {\it the residual subspace} $\fH_{\res}$ is the largest vector subspace that does not contain any non-trivial sub-module and $\fH_{\comp}$ is {\it the complete sub-module}. 
We will see that $\fH_{\comp}$ can be decomposed into irreducible components.

\textbf{Diffuse and atomic P-modules.}
If $p$ is an infinite binary sequence (often called a {\it ray} when identified with a path in the infinite rooted binary tree), then write $[p]$ for its class obtained by swapping finite prefix.
We set $p_n$ to be the first $n$ digits of $p$ that we often identify with an operator obtained by replacing digits of $p$ by $A,B$ and reversing the order.
We define $\fH_{\diff}\subset\fH_{\comp}$ to be the subset of vectors $\xi$ satisfying that $\lim_n \norm{p_n \xi}=0$ for all rays $p$.
This forms a sub-module that we call the \textit{diffuse} part of $\fH$.
In contrast, for each periodic ray $p$ we define $\fHray{p} \subset \fH_{\comp}$ to be the span of all vectors $\xi$ such that there exists a ray $q \in [p]$ satisfying $\norm{q_n\xi} = \norm{\xi}$ for all $n \geq 1$. 
This also forms a sub-module $\fH_{\atom} := \oplus_{[p]} \fHray{p}$ called the \textit{atomic} part of $\fH$. 

{\bf The Pythagorean functor.}
Jones' technology promotes a P-module into a representation of $F$ that extends to $T,V$ and even $\cO$.
This process is functorial giving the four {\it Pythagorean functors} (P-functors) 
$\Pi^X: \Mod(\cP) \rightarrow \Rep(X)$ for $X=F,T,V,\cO$, where $\Rep(X)$ is the usual category of representations of $X$, and $\Mod(\cP)$ is the category of P-modules with same objects as $\Rep(\cP)$ but with more morphisms, see Section \ref{subsec:p-functor}.
We can now state our first main theorem on decomposing P-modules and P-representations.

\begin{letterthm} \label{theo:atom-diff-decomp}
If $(A,B,\fH)$ is a P-module then the following assertions hold:
	\begin{enumerate}[i]
		\item $\Pi^X(\fH) \cong \Pi^X(\fH_{\comp})$;
		\item $\fH_{\comp} = \fH_{\atom} \oplus \fH_{\diff} = \oplus_{[p]} \fHray{p} \oplus \fH_{\diff}$;
		\item $\Pi^X(\fH) \cong \Pi^X(\fH_{\atom}) \oplus \Pi^X(\fH_{\diff}) \cong \oplus_{[p]} \Pi^X(\fHray{p}) \oplus \Pi^X(\fH_{\diff})$
	\end{enumerate}
	where $[p]$ runs over all {\it periodic rays} and $X=F,T,V,\cO$.	
\end{letterthm}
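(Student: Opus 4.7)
The approach is to prove (i) by a direct reduction to properties of the P-functor, prove (ii) as a purely module-theoretic decomposition of $\fH_{\comp}$, and finally deduce (iii) by applying $\Pi^X$ to (ii). Assertion (i) amounts to the statement that $\Pi^X$ factors through the complete sub-module. Indeed, the inclusion $\fH_{\comp} \hookrightarrow \fH$ is a P-module morphism, and by maximality $\fH/\fH_{\comp}$ contains no non-trivial sub-module; in our previous work we established that $\Pi^X$ annihilates any P-module with no non-trivial sub-module, and combining these two facts yields the isomorphism $\Pi^X(\fH_{\comp}) \cong \Pi^X(\fH)$.

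For (ii), I first verify that each $\fHray{p}$ with $p$ periodic is a sub-module. Let $\xi$ be a generating vector of $\fHray{p}$, so $\norm{q_n \xi} = \norm{\xi}$ for all $n$ and some $q \in [p]$. In particular $\norm{q_1 \xi} = \norm{\xi}$, and the Pythagorean identity $\norm{A\xi}^2 + \norm{B\xi}^2 = \norm{\xi}^2$ forces one of $A\xi, B\xi$ to vanish; the other is a generator attached to the shifted ray $q_2 q_3 \ldots \in [p]$. Closure of $\fHray{p}$ under $A, B$ then extends by linearity. Next, $\fH_{\diff}$ is a sub-module since for every ray $p$ one has $p_n (A\xi) = (Ap)_{n+1} \xi$, and analogously for $B$, so diffuseness is inherited. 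Finally, the intersection $\fH_{\atom} \cap \fH_{\diff}$ is trivial because any generator of $\fH_{\atom}$ has constant nonzero norm along a specific path, directly contradicting diffuseness along that path.

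The crux of (ii) is the equality $\fH_{\comp} = \fH_{\atom} \oplus \fH_{\diff}$ together with the internal directness of $\oplus_{[p]} \fHray{p}$. For $\xi \in \fH_{\comp}$, the iterated Pythagorean identity gives $\norm{v\xi} \leq \norm{\xi}$ for every finite word $v$, so in the finite-dimensional $\fH$ we may extract subsequential limits of $p_n \xi$ along any ray $p$. If every such limit is zero, $\xi$ is diffuse; otherwise, selecting $p$ and a subsequence $p_{n_k} \xi$ bounded below, a compactness plus pigeonhole argument on the shift dynamics of the words $p_n$ acting on $\fH$ forces the accumulation to occur along an eventually periodic ray, producing an atomic summand. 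I expect this to be the main obstacle: one must cleanly extract this atomic summand---as a lift of an eigenvector with unimodular eigenvalue for the monodromy operator associated to the period---from $\xi$, subtract it, and iterate to exhaust the non-diffuse content. The internal directness of the sum over inequivalent periodic classes then follows from a separate orbit/spectral argument tracking the distinct monodromy eigenspaces, which are disjoint across inequivalent classes.

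Finally, (iii) is a formal consequence of (ii): $\Pi^X$ sends internal direct sums of sub-modules to direct sums of sub-representations, so the module decomposition carries through to the representation level for every $X = F, T, V, \cO$.
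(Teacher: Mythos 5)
There are two genuine gaps. First, your argument for (i) rests on a fact that does not exist and cannot be made to work in the form you state it: ``$\Pi^X$ annihilates any P-module with no non-trivial sub-module'' is incoherent here, because the residual part $\fH_{\res}$ is \emph{not} a sub-module (it is only a subspace, not invariant under $A,B$), so it is not a P-module and $\Pi^X$ cannot be applied to it; nor does $\fH/\fH_{\comp}$ inherit a Pythagorean (isometric) structure in any obvious way; and in any case $\Pi^X$ of a non-zero P-module is never zero (the space $\scrH$ is infinite-dimensional). The actual content of (i) is the density statement $\sub{\fH_{\comp}}=\scrH$, i.e.\ that every $\xi\in\fH_{\res}$ is a norm-limit of vectors supported on $\fH_{\comp}$ after expanding along deep trees. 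The paper proves this quantitatively (Proposition \ref{prop:complete-mod-rep}): one introduces $\psi_n(\xi)=\sum_{|w|=n}\norm{P(w\xi)}^2$ with $P$ the projection onto $\fH_{\res}$, uses compactness of the unit ball of $\fH_{\res}$ to find $N$ with $\max\psi_N=M_N<1$ (else a limit vector would generate a sub-module inside $\fH_{\res}$), and then a submultiplicativity estimate $\psi_{N+k}(\xi)\le M_N\,\psi_k(\xi)$ to force $\psi_k(\xi)\to 0$. Nothing of this sort is present in your proposal, so (i) is unproved.

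Second, for (ii) you yourself flag the exhaustion step ($\fH_{\comp}=\fH_{\atom}\oplus\fH_{\diff}$) as ``the main obstacle'' and only sketch an extract-and-subtract scheme on individual vectors; as written this does not go through, since the accumulation point $\eta$ of $(p_n\xi)_n$ is not a direct summand of $\xi$ that you can subtract, and the claim that the accumulation must occur along an eventually periodic ray, and with \emph{exact} norm preservation $\norm{\hat p_n\eta}=\norm{\eta}$, requires the $\varepsilon$-argument of Proposition \ref{prop:dichotomy convergence prop} together with Lemma \ref{lem:vec-rays-ortho} (orthogonality of the tails $p_n\xi$, which also bounds the period by $\dim\fH$ and gives the directness of $\oplus_{[p]}\fHray{p}$ without any monodromy-eigenspace argument). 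The paper sidesteps your iteration entirely: by Lemma \ref{lem:p-mod-irr-decomp}, $\fH_{\comp}$ is a finite direct sum of \emph{irreducible} sub-modules, and it then suffices to show each irreducible P-module is atomic or diffuse, which is exactly the dichotomy of Proposition \ref{prop:dichotomy convergence prop} (if not diffuse, a vector contained in a periodic ray generates the whole module, which is then atomic). Your verifications that $\fHray{p}$ and $\fH_{\diff}$ are sub-modules, that the atomic and diffuse parts intersect trivially, and the deduction of (iii) from (i)--(ii) are fine, but without the two missing arguments above the theorem is not established.
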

The first statement is proven in Proposition \ref{prop:complete-mod-rep} and motivates the terminology. 
The second statement is proven in Theorem \ref{thm:atomic-diff-decomp} and Proposition \ref{prop:atom-decomp-rays}. The third statement follows from the first two.
Note that we only consider (eventually) periodic rays. In fact, we may even restrict to classes of rays with period of length smaller than $\dim(\fH)$.

\textbf{Description of atomic P-representations.}
The second half of the paper is dedicated to precisely describing and classifying atomic representations: P-representations from atomic P-modules. 
Given $d\geq 1$ define $W_d$ to be a set of representatives of prime binary word of length $d$ modulo cyclic permutations and write $S_1$ for the circle (complex numbers of modulus $1$).
For each pair $(w,\varphi)\in W_d\times S_1$ we define an explicit P-module $m_{w,\varphi}$ using $d$ by $d$ matrices.
This P-module is atomic and irreducible. 
We show that conversely all irreducible atomic P-module is of this form (up to isomorphism) and moreover describe explicitly their associated P-representations $\Pi^X(m_{w,\varphi})$ for the Thompson groups $X=F,T,V$. 
These explicit descriptions together with the Mackey--Schoda criterion give us a complete comprehension of atomic representations.
We refer to Section \ref{subsec:atomic-pmod-model} for notations and details.
In the below theorem, statements i, ii, v are proven in Theorem \ref{thm:atom-rep-class} while statements iii, iv follow easily from Section \ref{subsec:atomic-rep-fam}.

\begin{letterthm}\label{theo:atom-rep-classif}
	Let $X=F,T,V$ and fix $w \in W_d$, $\varphi \in S_1$. 
Write $p$ for the periodic ray $w^\infty$,  $X_p$ for the parabolic subgroup $\{g\in X:\ g(p)=p\}$, $\chi_{\varphi}^p$ for the representation $X_p\to S_1, g\mapsto \varphi^{\log(2^{|p|})(g'(p))}$.
Then the following assertions are true.
	\begin{enumerate}[i]
		\item If $(X, \vert w \vert) \neq (F,1)$ , then
		$\Pi^X(m_{w, \varphi})\cong \Ind_{X_p}^X\chi_\varphi^p$ and this representation is irreducible.
		\item If $w=0$ (resp.~$w=1$) set $q=1\cdot 0^\infty$ (resp.~$0\cdot 1^\infty$). We have:
		\[\Pi^F(m_{w, \varphi})\cong \chi_\varphi^p \oplus \Ind_{F_{q}}^q\chi_\varphi^q\]
		which is a direct sum of a one-dimensional representation and an irreducible one.
		\item Given $(w,\varphi)$ and $(v,\mu)$ and assuming that $(X,|w|)\neq (F,1)$, then $\Pi^X(m_{w, \varphi})\cong \Pi^X(m_{v, \mu})$ when $(w,\varphi)=(v,\mu).$
		\item Given $(w,\varphi)$ and $(v,\mu)$ and assuming that $|w|=|v|=1$, then $\Pi^F(m_{w, \varphi})\cong \Pi^F(m_{v, \mu})$ when $(w,\varphi)=(v,\mu)$ or $\varphi=\mu=1$.
		\item Every atomic representation is a finite direct sum of irreducible ones appearing in:
		\begin{equation}\label{eq:list-FD}\tag{AR}\{ \Ind_{X_p}^X\chi_\varphi^p:\ p \text{ eventually periodic ray}, \varphi\in S_1 \}.\end{equation}
	\end{enumerate}
\end{letterthm}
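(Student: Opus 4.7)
The plan is to leverage Theorem A, which reduces the study of any atomic P-representation to a finite direct sum $\bigoplus_{[p]}\Pi^X(\fHray{p})$ over classes of periodic rays, and then to analyze the irreducible atomic P-modules $m_{w,\varphi}$ individually. The core idea is that $m_{w,\varphi}$ carries a distinguished \textit{atomic vector} $\xi_p$ associated to the periodic ray $p=w^\infty$ on which the parabolic stabilizer $X_p$ acts by the character $\chi_\varphi^p$, so the cyclic $X$-subrepresentation generated by $\xi_p$ is naturally a quotient of $\Ind_{X_p}^X\chi_\varphi^p$ via Frobenius reciprocity.

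First I would unpack the explicit description of the representation space underlying $\Pi^X(m_{w,\varphi})$ (the Jones direct limit of trees with leaves decorated by vectors in $\fH$), exhibit $\xi_p$, and verify that the action of $g\in X_p$ agrees with $\chi_\varphi^p(g)$. This produces an intertwiner $\Ind_{X_p}^X\chi_\varphi^p\to\Pi^X(m_{w,\varphi})$, which by a dimension/orbit count is an isomorphism precisely when $X_p$ is a proper subgroup of $X$, which by inspection of the Thompson groups is equivalent to $(X,|w|)\neq(F,1)$; this gives (i). In the exceptional case $X=F$ with $w\in\{0,1\}$, the parabolic $F_p$ equals all of $F$ (since $F$ fixes both endpoints $0^\infty,1^\infty$), so $\xi_p$ spans only a one-dimensional invariant subspace; the complementary atomic vectors attached to the non-endpoint rays $q\in[p]$ generate the second summand $\Ind_{F_q}^F\chi_\varphi^q$, yielding (ii).

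The irreducibility assertion inside (i) and the distinctness statements (iii)-(iv) follow from the Mackey-Shoda criterion: $\Ind_{X_p}^X\chi_\varphi^p$ is irreducible iff for every $g\in X\setminus X_p$ the characters $\chi_\varphi^p$ and $\chi_\varphi^p\circ\Ad_{g^{-1}}$ disagree on the double stabilizer $X_p\cap gX_pg^{-1}$. This reduces to constructing, for each such $g$, elements fixing both $p$ and $gp$ with independently prescribed slopes at $p$, which is possible by the standard slope-flexibility of Thompson-like groups outside the degenerate case $(X,|w|)=(F,1)$. The same slope argument distinguishes non-conjugate parabolics and distinct $\varphi$'s, and the extra collapse in (iv) at $\varphi=\mu=1$ simply reflects that the trivial character on $F$ does not remember the choice of $w\in\{0,1\}$.

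For (v), Theorem A expresses any atomic representation as a finite sum $\bigoplus_{[p]}\Pi^X(\fHray{p})$ with $[p]$ running over finitely many periodic rays of period at most $\dim\fH$, and each summand splits further into irreducible components $\Pi^X(m_{w,\varphi})$ via the classification of irreducible atomic P-modules of Section \ref{subsec:atomic-pmod-model}; substituting (i)-(ii) and allowing $p$ in the list (AR) to range over eventually periodic rays absorbs the different ray representatives appearing in each class. I expect the main difficulty to lie in the Mackey-Shoda verification: one must construct elements of $X$ stabilizing two distinct rays with independently prescribed slopes at each, and correctly isolate why this construction fails exactly in the boundary case $(X,|w|)=(F,1)$ so that the behaviour in (ii) emerges naturally rather than as an \emph{ad hoc} exception.
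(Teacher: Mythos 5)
Your overall architecture matches the paper's (distinguished vector on which $X_p$ acts by $\chi_\varphi^p$, Mackey--Shoda for irreducibility and for (iii)--(iv), and Theorem A plus the classification of irreducible atomic P-modules for (v)), but the central step of item (i) has a genuine gap. You claim that since $X_p$ acts on $\xi_p$ by $\chi_\varphi^p$, ``Frobenius reciprocity'' produces an intertwiner $\Ind_{X_p}^X\chi_\varphi^p\to\Pi^X(m_{w,\varphi})$, upgraded to an isomorphism by a ``dimension/orbit count''. For unitary representations of a discrete group and an \emph{infinite-index} subgroup, naive Frobenius reciprocity fails: an $X_p$-eigenvector of character $\chi$ does not in general yield a bounded nonzero intertwiner out of $\Ind_{X_p}^X\chi$ (e.g.\ an $X$-invariant unit vector never does when $X/X_p$ is infinite, although it is trivially an eigenvector for the trivial character). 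The map $f\mapsto\sum_{gX_p}f(g)\sigma(g)\xi_p$ is bounded precisely when the translates $\sigma(g)\xi_p$ are suitably orthogonal, i.e.\ when the matrix coefficient $g\mapsto\langle\sigma(g)\xi_p,\xi_p\rangle$ vanishes off $X_p$. That vanishing is exactly the paper's Claim~3, proved using that $e_1$ is contained in the ray $p$ and that vectors contained in distinct rays are orthogonal (Lemma \ref{lem:vec-rays-ortho} together with Observation \ref{contained vector norm obs}); it is the structural input from the atomic module, and your proposal never establishes it. Likewise, ``isomorphism precisely when $X_p$ is proper'' is not a valid criterion and is not the mechanism behind the $(F,1)$ exception; in infinite dimensions no orbit/dimension count can replace it. The correct conclusion of (i) comes from: (a) cyclicity of $\xi_p=e_1$ in all of $\Pi^X(m_{w,\varphi})$ (the paper's Claim~1, which itself needs the transitivity of $X$ on standard dyadic intervals --- avoiding endpoints when $X=F$, which is where primeness and $|w|\neq1$ enter), and (b) identifying the matrix coefficient of $\xi_p$ with $\chi_\varphi^p$ extended by zero, so that the cyclic representation is the GNS representation of that positive-definite function, i.e.\ $\Ind_{X_p}^X\chi_\varphi^p$.

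Your treatment of the exceptional case (ii) is in the right spirit (it parallels the paper's splitting $\scrH=\C\xi\oplus\scrH_2$ with $\scrH_2$ spanned by translates of $\tau_1^*(\xi)$), and (iii)--(v) are handled as in the paper via Mackey--Shoda and the decomposition results. But to make (i) (and hence the second summand in (ii)) rigorous you must add the two missing verifications above: cyclicity of the distinguished vector for the full P-representation, and the vanishing of its matrix coefficients outside the parabolic subgroup.
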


This classification permits
to deduce that, up to few exceptions, the P-functors preserve irreducible classes in the atomic case (see Section \ref{subsec:atom-rep-decomp}). 
Moreover, this extends to the Cuntz algebra (since $V\subset\cO$ and using items i, iii).
We will obtain a similar conclusion in the {\it diffuse} case in \cite{Brothier-Wijesena23} by using a direct conceptual argument.

\begin{lettercor} \label{cor:intro-classif-atom-general}
	Let $X = F,T,V,\cO$ and $\fH_1, \fH_2$ be two atomic P-modules. If either $X \neq F$ or both $\fH_1, \fH_2$ do not contain a copy of $m_{w, \varphi}$ with $\vert w \vert = 1$ then:
	\begin{enumerate}[i]
		\item $\Pi^X(\fH_1)$ is irreducible if and only if $(\fH_1)_{\comp}$ is irreducible;
		\item $\Pi^X(\fH_1) \cong \Pi^X(\fH_2)$ if and only if $(\fH_1)_{\comp}\cong (\fH_2)_{\comp}$.
	\end{enumerate}
\end{lettercor}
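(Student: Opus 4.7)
The plan is to combine Theorem~\ref{theo:atom-diff-decomp} with the explicit classification of Theorem~\ref{theo:atom-rep-classif}. First, Theorem~\ref{theo:atom-diff-decomp} (i) gives $\Pi^X(\fH_j) \cong \Pi^X((\fH_j)_{\comp})$ for $j=1,2$, so I may immediately replace each $\fH_j$ by its complete sub-module. Since the $\fH_j$ are atomic, the diffuse part is trivial and Theorem~\ref{theo:atom-diff-decomp} (ii) further yields $(\fH_j)_{\comp} = \bigoplus_{[p]} \fHray{p}$. The structural results underlying Theorem~\ref{theo:atom-rep-classif} (v) let me further decompose each $\fHray{p}$ as a finite direct sum of irreducible atomic P-modules isomorphic to $m_{w,\varphi}$ with $w$ a prime representative satisfying $w^{\infty}\in [p]$; I take this irreducible decomposition as the starting point for both items.

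Applying the additive functor $\Pi^X$ then gives $\Pi^X((\fH_j)_{\comp}) \cong \bigoplus_i \Pi^X(m_{w_i^{(j)},\varphi_i^{(j)}})$. Under the hypothesis excluding the pair $(X,|w|)=(F,1)$, Theorem~\ref{theo:atom-rep-classif} (i) guarantees that each summand is irreducible, and Theorem~\ref{theo:atom-rep-classif} (iii) guarantees that distinct labels $(w,\varphi)$ yield pairwise non-isomorphic irreducibles of $X$. Both assertions of the corollary then follow from uniqueness of the multiplicities in such a decomposition into pairwise distinguishable irreducibles: item (i) amounts to saying this sum has exactly one summand occurring with multiplicity one, which is equivalent to $(\fH_1)_{\comp}\cong m_{w,\varphi}$ being irreducible as a P-module; item (ii) amounts to equality of the multiplicity functions $(w,\varphi)\mapsto \mathrm{mult}_{(w,\varphi)}$ on both sides, and the non-isomorphism statement lets me transfer this equality from the representation side back to the module side.

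The main obstacle is twofold. First, I need the P-module decomposition of each $\fHray{p}$ into irreducibles $m_{w,\varphi}$ to carry a well-defined multiplicity function; this exploits the explicit matrix model for $m_{w,\varphi}$ from Section~\ref{subsec:atomic-pmod-model} together with Theorem~\ref{theo:atom-rep-classif} (iii), effectively pulling faithfulness from representations of $X$ back to morphisms in $\Mod(\cP)$. Second, the hypothesis excluding $(X,|w|)=(F,1)$ is truly necessary: in the excluded case Theorem~\ref{theo:atom-rep-classif} (ii) shows $\Pi^F(m_{w,\varphi})$ splits into a character plus an induced piece, and Theorem~\ref{theo:atom-rep-classif} (iv) permits distinct labels (namely those with $\varphi=\mu=1$) to produce isomorphic representations, so the one-to-one correspondence between irreducible P-module summands and irreducible representation summands collapses and item (ii) genuinely fails there.
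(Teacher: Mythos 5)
Your argument is correct for $X=F,T,V$ and follows essentially the same route as the paper: reduce to the complete sub-module, decompose it into irreducible atomic P-modules $m_{w,\varphi}$ (Lemma \ref{lem:p-mod-irr-decomp}, Propositions \ref{prop:atom-decomp-rays} and \ref{prop:atom-p-mod-classif}), identify their images under $\Pi^X$ as pairwise inequivalent irreducible monomial representations, and match multiplicities -- the step the paper compresses into ``immediately follows from Proposition \ref{prop:complete-mod-rep} and Theorem \ref{thm:atom-rep-class}''. Two small points: first, the statement also covers $X=\cO$, which you never address; since Theorem \ref{theo:atom-rep-classif} concerns $F,T,V$ only, you should add the paper's one-line reduction that the $\cO$ case follows from the $V$ case, because $V\subset\cU(\cO)$ and $\Pi^\cO(m)$ restricts to $\Pi^V(m)$ on the same Hilbert space (so irreducibility and equivalence of the $\cO$-representations transfer to $V$, and the converse directions are functoriality). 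Second, item iii of Theorem \ref{theo:atom-rep-classif} is literally stated only as a sufficiency (``when $(w,\varphi)=(v,\mu)$''), whereas you need the converse, namely that distinct labels give inequivalent irreducibles; that converse is exactly Lemma \ref{lem:monomial-rep-class} (self-commensuration of the parabolic subgroups plus Mackey--Shoda), which is what the paper's proof invokes directly, so cite that rather than Theorem \ref{theo:atom-rep-classif} iii.
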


Recall that a representation of a group is {\it weakly mixing} (resp.~{\it Ind-mixing}) if it does not contain (resp.~the induction of) a non-zero finite-dimensional representation.
We proved in \cite{Brothier-Wijesena22} that a {\it diffuse} P-representation is Ind-mixing.
In sharp contract, atomic representations are direct sum of monomial representations. 
This allows to deduce the following characterisations (proved in Section \ref{subsec:atom-rep-decomp}).

\begin{lettercor}\label{cor:intro-ind-weak-mix}
Fix a P-module	$m = (A,B,\fH)$ and set $X=F,T,V$. We have:
	\begin{enumerate}[i]
		\item the representation $\Pi^X(m)$ is weakly mixing if and only if either $X=T,V$ or $\lim_{n}A^n\xi=\lim_n B^n\xi=0$ for all $\xi\in\fH$;
		\item the representation $\Pi^X(m)$ is Ind-mixing if and only if $m$ is diffuse.
	\end{enumerate}
\end{lettercor}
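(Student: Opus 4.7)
The plan is to leverage the decomposition of Theorem A together with the explicit classification in Theorem B, reducing everything to a single operator-theoretic verification. By Theorem A we have $\Pi^X(m) \cong \Pi^X(m_{\atom}) \oplus \Pi^X(m_{\diff})$, and $\Pi^X(m_{\diff})$ is Ind-mixing (hence a fortiori weakly mixing) by \cite{Brothier-Wijesena22}. Thus both equivalences reduce to an analysis of the atomic summand, which by Theorem A(ii,iii) further splits as a direct sum of irreducible pieces $\Pi^X(m_{w,\varphi})$ indexed by (classes of) periodic words $w$ and $\varphi \in S_1$. Statement (ii) is then essentially immediate: if $m$ is not diffuse, some such summand is non-zero, and by Theorem B this summand either equals $\Ind_{X_p}^X \chi_\varphi^p$ or, in the exceptional case $(X,|w|)=(F,1)$, splits as $\chi_\varphi^p \oplus \Ind_{F_q}^F \chi_\varphi^q$ with $\chi_\varphi^p = \Ind_F^F \chi_\varphi^p$. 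In every case $\Pi^X(m)$ contains an induction of a finite-dimensional representation, so it fails to be Ind-mixing; the converse is the prior Ind-mixing theorem.

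For (i), I will use that $[X:X_p] = \infty$ for every eventually periodic ray $p$ whenever $(X,|w|) \neq (F,1)$: $F$ fixes only the constant rays $0^\infty, 1^\infty$, while $T$ and $V$ act with infinite orbit on every eventually periodic ray. Combining this with Theorem B(i), the representation $\Pi^X(m_{w,\varphi})$ is in these cases irreducible and infinite-dimensional and therefore contains no non-zero finite-dimensional subrepresentation. Since a direct sum of weakly mixing representations is weakly mixing (a finite-dimensional subrepresentation would project equivariantly and injectively into each summand, forcing it to be zero), this fully disposes of $X=T,V$ regardless of $m$. For $X=F$, the only potential source of a finite-dimensional subrepresentation in the atomic part is the exceptional case $|w|=1$ in Theorem B(ii), whose one-dimensional summand $\chi_\varphi^p$ breaks weak mixing; the higher-$|w|$ components remain irreducible and infinite-dimensional as above.

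It then remains to match the vanishing of $\fHray{0^\infty}$ and $\fHray{1^\infty}$ with the operator condition $\lim_n A^n\xi = \lim_n B^n\xi = 0$ for all $\xi \in \fH$. The forward implication is immediate from the defining norm equality for $\fHray{p}$. The reverse is where finite-dimensionality of $\fH$ is essential: if $\norm{A^n\eta}$ does not tend to zero for some $\eta$, then since $\norm{A}\leq 1$ this non-increasing sequence converges to some $c>0$, and compactness of the sphere of radius $c$ in $\fH$ yields a subsequential limit $\xi = \lim_k A^{n_k}\eta$ satisfying $\norm{A^j\xi} = c = \norm{\xi}$ for all $j \geq 0$; such $\xi$ witnesses a non-zero vector in $\fHray{0^\infty}$, and symmetrically for $B$. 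I expect this last limit step to be the main technical point of the argument, though it is brief in finite dimension; the conceptual weight is carried entirely by the atomic/diffuse decomposition of Theorem A and the induction description of Theorem B.
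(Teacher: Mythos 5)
Your overall route is the same as the paper's: decompose via Theorem \ref{theo:atom-diff-decomp}, identify the atomic summands via Theorem \ref{thm:atom-rep-class}, quote \cite{Brothier-Wijesena22} for the diffuse part, and convert a failure of the operator condition into a vector contained in a constant ray by compactness (your inline argument is a correct simplification of Proposition \ref{prop:dichotomy convergence prop}, the ray being constant). Part (ii), the $T,V$ case, and the identification of the $|w|=1$ summands as the only finite-dimensional pieces are all fine; only your parenthetical for ``a direct sum of weakly mixing representations is weakly mixing'' is misphrased (the coordinate projection of a finite-dimensional subrepresentation need not be injective into each summand; rather, some coordinate projection is non-zero and its image is a non-zero finite-dimensional invariant subspace of that summand), though the fact itself is standard.

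The one step that does not follow as written is the final assertion that the limit vector $\xi$, satisfying $\norm{A^j\xi}=\norm{\xi}\neq 0$ for all $j\geq 0$, ``witnesses a non-zero vector in $\fHray{0^\infty}$''. By definition $\fHray{0^\infty}\subset\fH_{\atom}\subset\fH_{\comp}$, whereas your $\xi$ is only known to lie in $\fH$ and could a priori have a non-zero component in $\fH_{\res}$; nothing in the compactness construction places it in the complete sub-module, and your equivalence chain (weak mixing $\Leftrightarrow$ $\fHray{0^\infty}=\fHray{1^\infty}=0$ $\Leftrightarrow$ operator condition) genuinely needs membership in $\fH_{\comp}$, since Theorems \ref{theo:atom-diff-decomp} and \ref{thm:atom-rep-class} only see $\fH_{\comp}$. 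It is in fact true that any vector contained in a periodic ray lies in $\fH_{\comp}$, but proving this requires the decay estimate $\sum_{|w|=n}\norm{P(w\xi)}^2\to 0$ ($P$ the projection onto $\fH_{\res}$) from the proof of Proposition \ref{prop:complete-mod-rep}, combined with recurrence of the unitary period operator on the generated sub-module---it cannot be read off the definitions. The cheap repair, and essentially what the paper does, is to bypass $\fHray{0^\infty}$: the sub-module $\fK:=\Span\{A^k\xi:k\geq 0\}$ satisfies $B\restriction_{\fK}=0$ and $A\restriction_{\fK}$ isometric (Pythagorean identity), so every vector of $\fK$ is contained in $0^\infty$; hence $\fK_{\comp}\neq\{0\}$ is a direct sum of copies of $m_{0,\varphi}$, and Theorem \ref{thm:atom-rep-class} applied to $\fK$ shows that the subrepresentation $\Pi^F(\fK)\subset\Pi^F(m)$ contains a character $\chi_{\varphi}^{0^\infty}$, which kills weak mixing without ever deciding where $\xi$ sits relative to $\fH_{\comp}$. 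With that substitution your argument is complete and coincides with the paper's proof.
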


{\bf Geometrical interpretation.}
We now introduce coordinates: $\fH=\C^d$ and $A,B\in M_d(\C)$ (they are $d$ by $d$ complex matrices).
Denote by $\Irr_{\atom}(d)$ the set of P-modules $m=(A,B,\C^d)$ that are atomic and irreducible.
Our work demonstrates that in the atomic case the number $d$ is an invariant of both $m$ and $\Pi^X(m)$ that we name the {\it Pythagorean dimension} (in short P-dimension).
The projective special unitary group $\PSU(d)$ acts on $\Irr_{\atom}(d)$ by conjugation: $u\cdot (A,B):=(uAu^*,uBu^*).$
By definition, the $\PSU(d)$-orbits are the irreducible classes of atomic P-modules.  
Moreover, the P-modules $m_{w,\varphi}$ of Section \ref{subsec:atomic-pmod-model} are representatives of these orbits. 
Using Corollary \ref{cor:intro-classif-atom-general} we deduce that $\PSU(d)$ classifies the associated P-representations. Using obvious manifold structures we deduce moduli spaces of atomic P-representations (see Section \ref{sec:geometry} for details).

\begin{lettercor}\label{lettercor:geometry}
Consider $d\geq 1$ and $X=F,T,V,\cO$. We have:
\begin{enumerate}[i]
\item $\PSU(d)\times W_d\times S_1\to \Irr_{\atom}(d),\ (u,A,B)\mapsto u\cdot (A,B)$ is a bijection. Hence, $\Irr_{\atom}(d)$ has an obvious structure of compact smooth real manifold of dimension $d^2$.
\item For $(X,d) \neq (F,1)$ the set of irreducible classes of atomic representations of $X$ with P-dimension $d$ is in bijection with $W_d \times S_1$ (a finite disjoint union of circles).
\item If $d\neq \ti d$ then $\Pi^X(m)\not\cong \Pi^X(\ti m)$ where $m\in\Irr_{\atom}(d)$ and $\ti m\in \Irr_{\atom}(\ti d).$ 
\end{enumerate}
\end{lettercor}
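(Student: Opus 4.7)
The plan is to bootstrap from Theorem~\ref{theo:atom-rep-classif} and Corollary~\ref{cor:intro-classif-atom-general}; the one new ingredient is the promotion of $\Mod(\cP)$-isomorphisms between irreducible atomic P-modules to unitary conjugations. Given a $\Mod(\cP)$-isomorphism $T : m_1 \to m_2$ on $\C^d$, substituting $A_2 = T A_1 T^{-1}$, $B_2 = T B_1 T^{-1}$ into the Pythagorean identity $A_2^* A_2 + B_2^* B_2 = \id$ yields $\Phi_1(T^*T) = T^*T$, where $\Phi_1(X) := A_1^* X A_1 + B_1^* X B_1$ is the unital completely positive map attached to $m_1$. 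Testing against a unit eigenvector $\xi$ of $T^*T$ at its maximal eigenvalue and using $A_1^*A_1 + B_1^*B_1 = \id$ forces $A_1\xi, B_1\xi$ into the top eigenspace of $T^*T$, so that eigenspace is a non-zero sub-module; irreducibility then collapses $T^*T$ to a scalar, making $T$ a positive scalar times a unitary. Combined with Theorem~\ref{theo:atom-rep-classif}.v and the enumeration of irreducible atomic P-modules from Section~\ref{subsec:atomic-pmod-model}, every element of $\Irr_{\atom}(d)$ is $\PSU(d)$-conjugate to a unique $m_{w,\varphi}$, giving surjectivity of the map in~(i). Injectivity follows from a Schur-type argument: any unitary stabilising $m_{w,\varphi}$ has spectral projections that are $A_{w,\varphi}, B_{w,\varphi}$-invariant, hence scalar by irreducibility, so its image in $\PSU(d)$ is trivial. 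The product manifold structure then transfers, with real dimension $(d^2 - 1) + 0 + 1 = d^2$ and compactness visible.

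For part~(ii), the hypothesis $(X,d) \neq (F,1)$ places us in the scope of Corollary~\ref{cor:intro-classif-atom-general}, so $\Pi^X$ is injective on iso classes of irreducible atomic P-modules; composing with the bijection from~(i) yields the correspondence with $W_d \times S_1$, a finite disjoint union of circles indexed by prime cyclic words of length $d$.

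For part~(iii), suppose $\Pi^X(m) \cong \Pi^X(\ti m)$ with $m \cong m_{w,\varphi}$, $\ti m \cong m_{v,\mu}$, and $|w| = d$, $|v| = \ti d$. If neither $(X, d)$ nor $(X, \ti d)$ equals $(F, 1)$, both representations are irreducible by Theorem~\ref{theo:atom-rep-classif}.i; irreducibility of $\Pi^X(\ti m)$ excludes the exceptional case on the $v$-side as well, and Theorem~\ref{theo:atom-rep-classif}.iii then forces $(w, \varphi) = (v, \mu)$, hence $d = \ti d$. Otherwise one may assume $X = F$ with $d = 1$ and $\ti d \geq 2$: then $\Pi^F(m)$ contains a one-dimensional subrepresentation $\chi_\varphi^p$ (Theorem~\ref{theo:atom-rep-classif}.ii), whereas $\Pi^F(\ti m) \cong \Ind_{F_{v^\infty}}^F \chi_\mu^{v^\infty}$ is irreducible and infinite-dimensional (the inducing subgroup has infinite index since $v^\infty$ is a non-dyadic point with infinite $F$-orbit), precluding any isomorphism.

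The hard part is the unitarization step in~(i): without it one only identifies $\Irr_{\atom}(d)$ with $\GL_d(\C)$-orbits rather than $\PSU(d)$-orbits, and the manifold dimension would be off. Once the Pythagorean equation is exploited to pin $T^*T$ to a scalar, everything else is direct bookkeeping on top of Theorem~\ref{theo:atom-rep-classif} and Corollary~\ref{cor:intro-classif-atom-general}.
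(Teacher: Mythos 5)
Your argument is correct and, at its core, follows the same route as the paper: Proposition \ref{prop:atom-p-mod-classif} identifies $\{m_{w,\varphi}:(w,\varphi)\in W_d\times S_1\}$ as orbit representatives for the conjugation action of $\PSU(d)$ on $\Irr_{\atom}(d)$, and Corollary \ref{cor:intro-classif-atom-general} (equivalently Theorem \ref{thm:atom-rep-class} plus Lemma \ref{lem:monomial-rep-class}) transports this to the representations. The one place you diverge is the step you call the hard part: promoting an arbitrary $\Mod(\cP)$-isomorphism between irreducible atomic modules to a unitary via $\Phi_1(T^*T)=T^*T$ and the top-eigenvalue argument. That lemma is correct and is a nice strengthening (any bounded invertible intertwiner between finite-dimensional irreducible P-modules is a positive scalar times a unitary), but it is not needed here: the paper's notion of equivalence of P-modules is \emph{by definition} unitary equivalence, and the proof of Proposition \ref{prop:atom-p-mod-classif}~ii already produces a unitary (it sends an orthonormal basis $\Xi$ to the standard basis), so the dichotomy you fear between unitary and merely linear orbits does not arise in the paper's framework. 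On the other hand, your explicit Schur-type computation that the $\PSU(d)$-stabiliser of $m_{w,\varphi}$ is trivial is a detail the paper leaves implicit, and it is genuinely required for the map in (i) to be a bijection rather than merely to induce a bijection on orbit spaces, so it is a worthwhile addition. One small citation point: in part (iii) your first case invokes Theorem \ref{theo:atom-rep-classif}~i and iii, which are stated only for $X=F,T,V$; for $X=\cO$ you should instead either apply Corollary \ref{cor:intro-classif-atom-general} directly (unitary equivalence of the modules is impossible when $d\neq\ti d$) or restrict the $\cO$-representations to $V\subset\cO$ and reuse the $V$ case — an immediate fix, not a gap.
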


\underline{\textbf{Atomic representations of Dutkay, Haussermann and Jorgensen.}} 
The family of so-called \textit{purely atomic} representations of the Cuntz algebra $\cO$ was defined in \cite{Dutkay-Haussermann-Jorgensen15}.
They coincide with the atomic representations of $\cO$ considered in the current article. Similarly to the current article, in \cite{Dutkay-Haussermann-Jorgensen15} the authors classified the irreducible classes of purely atomic representations of $\cO$. Despite sharing some common features, these two studies are rather different in nature.
Notably, the classification of purely atomic representations is accomplished by studying directly the larger infinite-dimensional Hilbert space $\scrH$ via certain projections associated to singleton sets. In contrast, our study is primarily focused on classifying atomic representations by only studying the smaller finite-dimensional Hilbert space $\fH$. This allows us to follow somewhat simpler arguments and phrase many of our results in terms of finite-dimensional liner algebra.
Furthermore, we are largely concerned on studying the restriction of the atomic representations to the Thompson groups and to explicitly describe them (as direct sum of monomial representations). 
This takes up the majority of the last section in the paper, where else irreducibility and equivalence follows rather easily from classical results. 
Finally, our novel approach of decomposing finite-dimensional P-modules provides an important framework for future work. This will permit us to recover and extend, among other, results appearing in 
\cite{Aita-Bergmann-Conti97, Araujo-Pinto22, Barata-Pinto19, Bergmann-Conti03, Bratteli-Jorgensen19, Dutkay-Haussermann-Jorgensen15, Garncarek12, Guimaraes-Pinto22, Jones21, Kawamura05, Mori-Suzuki-Watatani07, Olesen16}. 
This will be extensively explained in \cite{Brothier-Wijesena23}.

%%%%%%%%%%%%%%%%PRELIMINARIES%%%%%%%%%%%%%%%%%%%%%%%%%%%%%%%%%
%%%%%%%%%%%%%%%%%%%%%%%%%%%%%%%%%%%%%%%%%%%%%%%%%%%%%%%%%%

\section{Preliminaries}\label{sec:preliminaries}
In this section we fix notations (similar to the ones in \cite{Brothier-Wijesena22}) and recall some standard definitions and results.

{\bf Convention.} 
We assume that all groups are discrete, all Hilbert spaces are over the complex field $\C$ with inner-products linear in the first variable, and all group representations are {\it unitary}.
The set of natural numbers $\N$ contains $0$ and we write $\N^*$ for $\N\setminus\{0\}$.

\subsection{Monomial representations and the Mackey--Shoda criterion}

%{\bf Induced and monomial representations.}
If $H\subset G$ is a subgroup and $\sigma$ a representation of $H$, then $\Ind_H^G \sigma$ denotes the \textit{induced representation} of $\sigma$ associated to $H$. 
When $\sigma=\chi$ is one-dimensional (i.e.~valued in the circle group $S_1$) then $\Ind_H^G \chi$ is called \textit{monomial}.
If $\chi=1_H$ is the trivial representation then $\lambda_{G/H}:=\Ind_H^G\chi$ is the \textit{quasi-regular} representation.

{\bf Commensurator.}
Let $H\subset G$ be a subgroup.
\begin{itemize}
	\item The \textit{commensurator} of $H\subset G$ is the subgroup $\Comm_G(H)\subset G$ of $g\in G$ satisfying that $H\cap g^{-1}Hg$ has finite index in both $H$ and $g^{-1}Hg$.
	\item The subgroup $H\subset G$ is \textit{self-commensurating} if $H=\Comm_G(H)$.
\end{itemize}

We recall the celebrated Mackey--Shoda criterion \cite{Mackey51} (see also \cite{Bekka-Harpe20} Theorem 1.F.11, Theorem 1.F.16 and Corollary 1.F.18).

\begin{theorem} \label{Mackey--Shoda criteria}
Let $H_i\subset G_i,i=1,2$ be two subgroups and take one-dimensional representations $\chi_i:H_i\to S_1, i=1,2.$ Set $(H,G):=(H_1,G_1)$. We have the following.
	\begin{enumerate}[i]
		\item The induced representation $\Ind_H^G(\chi)$ is irreducible if and only if for every $g\in\Comm_G(H)$ with $g\notin H$, the restrictions of $\chi:H\to S_1$ and $\chi^g:g^{-1}Hg\ni s\mapsto \chi(gsg^{-1})$ to the subgroup $H\cap g^{-1} H g$ do not coincide.
		\item The induced representation $\Ind_{H_1}^G\chi_1$ is unitary equivalent to $\Ind_{H_2}^G\chi_2$ if and only if there exists $g\in G$ such that $H_1 \cap g^{-1}H_2g$ has finite index in both groups $H_1$ and $g^{-1}H_2g$; and moreover the restrictions of $\chi_2^g$ and $\chi_1$ to $H_1 \cap g^{-1}H_2g$ coincide.
	\end{enumerate}
\end{theorem}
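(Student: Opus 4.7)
My plan is to derive both assertions from a direct analysis of bounded intertwiners between monomial representations via their double-coset decomposition, in the spirit of Mackey's intertwining number formula. I realise $\Ind_{H_i}^G \chi_i$ concretely as the Hilbert space $\scrH_i$ of functions $f \colon G \to \C$ satisfying $f(h g)=\chi_i(h) f(g)$ for all $h \in H_i,\, g \in G$ which are $\ell^2$ on a system of right coset representatives of $H_i \backslash G$, with $G$ acting by right translation. A bounded intertwiner $T\colon \scrH_1 \to \scrH_2$ is then encoded by a bi-equivariant kernel $K \colon G \to \C$ satisfying $K(h_2 g h_1) = \chi_2(h_2) K(g) \chi_1(h_1)$, and writing $G$ as the disjoint union of double cosets $H_2 g H_1$ the kernel decomposes orthogonally along these.

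Next I would analyse the contribution of a single double coset $H_2 g H_1$ to the space $\Hom_G(\scrH_1,\scrH_2)$. A non-zero bi-equivariant function supported on $H_2 g H_1$ exists (up to a scalar) if and only if the restrictions of $\chi_1$ and of the conjugate character $\chi_2^g \colon s \mapsto \chi_2(g s g^{-1})$ to the intersection $H_1 \cap g^{-1} H_2 g$ coincide. The associated operator on $\scrH_1$ is bounded, and then automatically a scalar multiple of a partial isometry, if and only if $H_1 \cap g^{-1} H_2 g$ has finite index in $H_1$. Thus each double coset contributes at most one complex dimension to $\Hom_G(\scrH_1,\scrH_2)$, and in the symmetric setting these finite-index conditions assemble into the commensurator condition.

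Statement (i) then reduces to the observation that the trivial double coset $H e H = H$ always contributes $\C \cdot \id$, so irreducibility is equivalent to $\dim \Hom_G(\scrH_1,\scrH_1) = 1$, which by the previous step is exactly the condition on $g \in \Comm_G(H) \setminus H$ given in the theorem. For statement (ii), a contributing non-trivial double coset directly furnishes a partial isometry intertwiner, which is then promoted to a unitary equivalence by inspecting its supports via the same bi-equivariance. Conversely any unitary equivalence is a non-zero bounded intertwiner, forcing at least one double coset to contribute and yielding the stated condition.

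I expect the main technical obstacle to lie in the boundedness step of the double-coset analysis: when $H_1 \cap g^{-1} H_2 g$ has infinite index in $H_1$, the formal bi-equivariant function on $H_2 g H_1$ does exist algebraically, and I need to verify that it cannot define a bounded operator on $\scrH_1$. I plan to rule this out by evaluating the candidate operator on a finitely supported vector of $\scrH_1$ and producing an image of infinite $\ell^2$-norm, along the lines of the standard treatment in \cite{Bekka-Harpe20}.
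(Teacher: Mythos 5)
Your double-coset strategy is the standard route to this theorem (the paper itself gives no proof; it quotes the result from \cite{Mackey51} and \cite{Bekka-Harpe20}, where precisely this intertwining-kernel analysis is carried out), and for part (i) your outline is essentially sound. One imprecision you should fix even there: the operator attached to a single double coset $H_2gH_1$ (constant-modulus bi-equivariant kernel) is bounded if and only if \emph{both} indices $[H_1:H_1\cap g^{-1}H_2g]$ and $[g^{-1}H_2g:H_1\cap g^{-1}H_2g]$ are finite, not only the first: you must run your $\ell^2$-norm argument on the columns of $T$ \emph{and} on the columns of $T^*$ (equivalently rows of $T$). This is exactly why the commensurator $\Comm_G(H)$ appears in (i) and why both finite-index conditions appear in (ii); as written, your ``bounded iff finite index in $H_1$'' would let through unbounded candidates.

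The genuine gap is in your treatment of part (ii), in the ``if'' direction. A contributing double coset gives a nonzero bounded intertwiner, but a nonzero intertwiner (even a partial isometry) between two unitary representations only identifies a subrepresentation of one with a subrepresentation of the other; it cannot be ``promoted to a unitary equivalence by inspecting its supports.'' Indeed, without an irreducibility input the biconditional you are trying to prove fails: take $G=\Z$, $H_1=\Z$, $H_2=2\Z$ and both characters trivial; the condition in (ii) holds with $g=e$, yet $\Ind_{H_1}^G\chi_1$ is one-dimensional while $\Ind_{H_2}^G\chi_2$ is two-dimensional. What your double-coset analysis actually proves is that the stated condition is equivalent to $\Hom_G(\Ind_{H_1}^G\chi_1,\Ind_{H_2}^G\chi_2)\neq\{0\}$; to get unitary equivalence you must combine this with irreducibility of the two induced representations (supplied by part (i), e.g.\ for self-commensurating subgroups such as the parabolic subgroups $X_p$ where the paper applies the criterion) and conclude via Schur's lemma. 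So either add that hypothesis and the Schur step explicitly, or reformulate (ii) as a non-disjointness criterion; the argument as proposed would fail at the promotion step.
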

In particular, all monomial representations constructed from self-commensurated subgroups are irreducible. 
Moreover, if $H_1,H_2$ are self-commensurating subgroups, then $\Ind_{H_1}^G\chi_1\cong \Ind_{H_2}^G\chi_2$ if and only if $g^{-1}H_2g=H_1$ and $\chi_2^g=\chi_1$ for some $g\in G$.
Here, unitary equivalent, denoted $\cong$, means that there exits a unitary transformation that intertwinnes the actions.

\subsection{Richard Thompson's groups $F \subset T \subset V$} \label{subsec:F-def}
We refer to \cite{Brown87,Cannon-Floyd-Parry96,Belk04} for details on the Richard Thompson groups and their diagrammatic descriptions.
%%%%%%%%%%%%%%%%%%%%%%%%%%%%%%%%%%%%%%%%%%%%%%%%%%%

{\bf Cantor space and Thompson's groups.}
Let $\cC:=\{0,1\}^{\N^*}$ be the Cantor space of infinite binary strings (also called sequences or rays).
Sequences are written from left to right. Finite binary strings or sequences are called {\it words}. We write $w\cdot u$ for the concatenation of $w$ with $u$.
We equipped these sequences with the lexicographic order.
If $w$ is a finite binary string (also called a word) we form $I_w:=w\cdot \cC$: the set of all sequences with prefix $w$. We call $I_w$ a {\it standard dyadic interval} (sdi in short). 
The terminology comes from the real interval $[0,1]$: the usual surjection $S:\cC\to [0,1],\ x\mapsto \sum_n 2^{-n} x_n$ maps $I_w$ into an interval of the form $[2^{-n}k, 2^{n}(k+1)].$
If $P:=(w_1,\dots,w_n)$ is an $n$-tuple of words such that $(I_{w_1},\dots,I_{w_n})$ forms a partition of $\cC$, then $P$ is called a {\it standard dyadic partition} (sdp in short).
We say that $P$ is oriented if $w_i<w_{i+1}$ for $1\leq i\leq n-1$ for the lexicographic order.
Two sdp's $P=(u_1,\dots,u_n)$ and $Q=(v_1,\dots,v_n)$ with the same number of sdi's defines a homeomorphism $g$ of $\cC$ such as $u_i\cdot x\mapsto v_i\cdot x.$
The collection of all of those form {\it Thompson's group} $V$.
Let $F\subset V$ (resp.~$T\subset V$) to be the subset of $g$ as above where $P$ and $Q$ are oriented (resp.~oriented up to cyclic permutation).
The subsets $F,T$ of $V$ are groups called the {\it Thompson groups} $F$ and $T$.

{\bf Slope.}
Write $|u|$ for the word-length of a word. 
If $g$ maps $I_u$ onto $I_v$ via $u\cdot x\mapsto v\cdot x$, then we say that $2^{|u|-|v|}$ is the slope of $g$ on $I_u$ and write $g'(u\cdot x)=2^{|u|-|v|}$ for all $x \in \cC$.

{\bf Trees, forests, and rays.}
Consider the infinite binary rooted tree $t_\infty$ that we geometrically identify as a graph in the plane where the root is on top, the root has two adjacent vertices to its bottom left and right, and every other vertex has three adjacent vertices: one above, one to the bottom left, and one to the bottom right. The bottom two vertices are called \textit{immediate children} of the vertex (and further down vertices are called {\it children}).
A pair of edges that have a common vertex is called a \textit{caret} and denoted by the symbol $\Y$.
We will constantly identify vertices of $t_\infty$ with words and boundary of $t_\infty$ with sequences in the usual manner (hence the digits $0$ and $1$ correspond to left-edge and right-edge, respectively, and the trivial word corresponds to the root of $t_\infty$).
Elements of the boundary of $t_\infty$ are called {\it rays}.
If $p$ is a ray and $n\geq 0$, then $p_n$ is the word made of the first $n$ digits of $p$ and ${}_np$ the rest of $p$ so that $p=p_n\cdot {}_np.$
If $p=v\cdot w^{\infty}$ for some word $v,w$, then we say that $p$ is {\it eventually periodic} and that $w$ is a {\it period} of $p$. Otherwise we say that $p$ is {\it aperiodic}.
We say that $w$ is {\it prime} when $w\neq u^n$ for all words $u$ and $n\geq 2$.
When $p=v\cdot w^{\infty}$ and $w$ is prime, then we write $|p|$ for the length $|w|$.
If $v$ is trivial, then $p=w^\infty$ and we say that $p$ is {\it periodic}.
Finally, we write $p\sim q$ when $p=q$ modulo finite prefixes, i.e.~there exists $n,k\geq 0$ such that ${}_np={}_kq.$
We write $[p]$ for the class of $p$ for $\sim$.

The term {\it tree} refers to any finite non-empty rooted sub-tree of $t_\infty$ whose each vertex has either none or two immediate children.
They form the set $\cT$. 
If $t$ is a tree, then the vertices of $t$ with no children are called {\it leaves}.
The relation ``being a rooted sub-tree" defines a partial order $\leq$ on $\cT$ for which $\cT$ is directed.
If $s\leq t$, then the diagram obtained by removing $s$ from $t$ is called a {\it forest}.
A forest $f$ is interpreted as a finite ordered list of trees $(f_1,\dots,f_n)$ that has $n$ roots.
Hence, $t$ is obtained by stacking $s$ on top of $f$. We then write $f\circ s$ for $t$. 
This extends to an associated partially defined binary operation on the set $\cF$ of all forests.
This confers to $\cF$ a structure of a small category.
Now, concatenating list of trees: $((f_1,\dots,f_n),(g_1,\dots,g_m))\mapsto (f_1,\dots,f_n,g_1,\dots,g_m)$ defines an associative binary operation on $\cF$. This is a monoidal product that we denote by $\otimes.$ It corresponds to concatenating {\it horizontally} forests.

{\bf Tree-diagrams for the Thompson groups.}
We now describe the Thompson groups using trees.
Consider $(t,\kappa,s)$ where $t,s$ are trees with same number of leaves, say $n$, and $\kappa$ is a permutation on $\{1,\dots,n\}.$
If the $j$th leaf of $s$ and $t$ correspond to the words $u_j,v_j$, respectively, then the triple defines the map $u_i\cdot x\mapsto v_{\kappa(i)}\cdot x$ for $x \in \cC$.
This is an element of $V$ and all elements of $V$ can be achieved in that way.
It is in $F$ (resp.~$T$) if and only if $\kappa$ is trivial (resp.~cyclic).
Assume for simplicity that $\kappa$ is trivial and write $(t,s)$ for $(t,\id,s).$
If $f$ is a forest composable with $t$, then note that $(f\circ t,f\circ s)$ defines the same element of $F$ as $(t,s)$.
Let $\sim$ be the equivalence relation generated by $(t,s)\sim (f\circ t,f\circ s)$ and write $[t,s]$ for the class of $(t,s).$
The set of these classes admits a group structure, isomorphic to $F$, via the composition $[t,s]\circ [s,r]=[t,r]$ and inverse $[t,s]^{-1}=[s,t].$
Similarly, we can define $T$ and $V$ using classes of triples $[t,\kappa,s].$
We call the triples $(t,\kappa,s)$ tree-diagrams and say that $u_i$ and $v_{\kappa(i)}$ are corresponding leaves of $(t,\kappa,s)$.

{\bf Specific notations.}
The trivial tree (the tree with one leaf equal to its root) is denoted $I$ or $e$.
We write $f_{k,n}:=I^{\ot k-1}\ot \Y\ot I^{\ot n-k}$ for the so-called elementary forest that has $n$ roots, $n+1$ leaves, all of its trees trivial except the $k$th one that is equal to a caret $\Y$.
The complete tree with $2^n$ leaves all at a distance $n$ from the root is denoted $t_n$.
If $f$ is a forest, then $\Root(f)$ and $\Leaf(f)$ denote its root-set and leaf-set, respectively.
We write $\Ver$ for the vertex-set of the rooted infinite complete binary tree $t_\infty$.
If $p\in\cC$, then $\Ver_p$ denotes all the finite prefixes $p_n$ of $p$ (i.e.~the vertices that the ray $p$ is passing through).

%%%%%%%%%%%%%%%%%%%%%%%%%%%%%%%%%%%%%%%%%%%%%%%%%%%%%%

\subsection{Parabolic subgroups of the Thompson groups}
For this subsection we shall take $X$ to denote any of $F,T,V$. 
Given $p\in\cC$ we form the so-called {\it parabolic subgroups}
$$X_p:=\{g\in X:\ g(p)=p\}.$$
Note that $X_p\neq X$ except when $X=F$ and $p$ is an endpoint of $\cC$. 

\subsubsection{Description of parabolic subgroups using tree-diagrams} \label{parabolic desc subsection}
Consider a ray $p$ with $n$th digit $x_n$.
Given $g=[t,\kappa,s]\in X$ we have that there exists a unique leaf $\nu$ of $t$ and $\omega$ of $s$ so that $\nu,\omega$ lie in the ray $p$ (equivalently $p\in I_\nu\cap I_\omega$).
If $|\nu|=m$ and $|\omega|=n$, then $\nu=p_m$ and $\omega=p_n$.
By definition of the action $V\act \cC$ we have that $g \in X_p$ if and only if $p_m$ and $p_n$ are corresponding leaves of $g$ and ${}_mp = {}_np$. 

Assume that $p$ is eventually periodic such that $p=v \cdot w^\infty$. Take $v$ as small as possible.
Then from the preceding paragraph we have
\begin{equation}\label{parabolic subgroup condition eqn}
	\begin{cases*}
		m = n, &\quad \text{ if } $m,n \leq |v|$ \\
		n-m \in |w|\Z, &\quad \text{ if } $m,n > |v|$.
	\end{cases*}
\end{equation} 
In the case when $p$ is {\it not eventually periodic} we obtain that $X_p$ is the group elements acting trivially on a neighbourhood of $p$  (i.e.~have slope $1$ at $p$).

\subsubsection{Monomial representations associated to parabolic subgroups} \label{general monomial rep section}
It is standard that that $X_p\subset X$ is self-commensurating.
Then Mackey--Schoda implies the following.

\begin{lemma} \label{lem:monomial-rep-class}
	Consider rays $p_i$ and one-dimensional representations $\chi_i$ of $X_{p_i}$ for $i=1,2$. 
	\begin{enumerate}[i]
		\item The monomial representation $\Ind_{X_{p_1}}^X \chi_1$ of $X$ is irreducible.
		\item If $(X,\vert p_i \vert) \neq (F,1)$ then $\Ind_{X_{p_1}}^X \chi_1\simeq \Ind_{X_{p_2}}^X \chi_2$ if and only if $\chi_1 \cong \chi_2$, and $[p_1] = [p_2]$.
	\end{enumerate}
\end{lemma}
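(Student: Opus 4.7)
The plan is to reduce both statements to the Mackey--Shoda criterion (Theorem \ref{Mackey--Shoda criteria}) applied to the pairs $X_{p_i} \subset X$. This rests on two structural facts, both of which I would extract from the tree-diagram description in Section \ref{parabolic desc subsection}: (a) each parabolic subgroup $X_p \subset X$ is self-commensurating, and (b) under the hypothesis $(X,|p|)\neq (F,1)$, the assignment $p\mapsto X_p$ is injective.

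For fact (a) I would argue by contraposition: given $g\in X$ with $q:=g^{-1}(p)\neq p$, use \eqref{parabolic subgroup condition eqn} to construct an infinite family of elements of $X_p$ that are pairwise inequivalent modulo $X_p\cap X_q = X_p\cap g^{-1}X_pg$, by acting freely on small neighbourhoods of $q$ while remaining trivial near $p$. This forces $[X_p:X_p\cap g^{-1}X_pg]=\infty$, so $g\notin\Comm_X(X_p)$. Part (i) is then immediate: for any one-dimensional $\chi$ the hypothesis of Theorem \ref{Mackey--Shoda criteria}(i) is vacuous and $\Ind_{X_p}^X\chi$ is irreducible.

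For part (ii) I would apply Theorem \ref{Mackey--Shoda criteria}(ii). A direct computation gives $g^{-1}X_{p_2}g = X_{g^{-1}(p_2)}$ for all $g\in X$, so by self-commensuration the criterion simplifies to: $\Ind_{X_{p_1}}^X\chi_1\cong\Ind_{X_{p_2}}^X\chi_2$ iff there exists $g\in X$ with $X_{g^{-1}(p_2)}=X_{p_1}$ and $\chi_2^g|_{X_{p_1}}=\chi_1$. Invoking fact (b), the subgroup equation forces $g^{-1}(p_2)=p_1$, equivalently $p_2$ lies in the $X$-orbit of $p_1$; a short case analysis on $X=V,T,F$ shows this orbit equals the full class $[p_1]$ under our hypothesis. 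The character condition then becomes $\chi_1\cong\chi_2$ under the conjugation isomorphism $X_{p_1}\cong X_{p_2}$. The converse is immediate: given $[p_1]=[p_2]$, pick $g\in X$ with $g(p_1)=p_2$ and transport $\chi_2$ to $\chi_1$.

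The main obstacle is establishing fact (b). The excluded case $(X,|p|)=(F,1)$ is genuinely pathological: the endpoints $0^\infty$ and $1^\infty$ are fixed by all of $F$, and more generally pairs such as $v\cdot 01^\infty$ and $v'\cdot 10^\infty$ corresponding to the same dyadic rational in $[0,1]$ share the same $F$-stabiliser. Outside this case, I would exploit the fact that for any two distinct rays $p\neq q$, the first position where they diverge splits $\cC$ into two sdi's, in one of which I can construct an element of $X_p$ acting non-trivially and moving $q$, using that $p$ (with $|p|>1$ or aperiodic) is not an endpoint of its containing sdi.
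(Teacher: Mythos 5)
Your route is the same as the paper's: the paper disposes of this lemma by noting that the parabolic subgroups are self-commensurating and invoking the Mackey--Shoda criterion, and your fact (a), the identity $g^{-1}X_{p_2}g=X_{g^{-1}(p_2)}$, and the orbit analysis (the $X$-orbit of $p_1$ being $[p_1]$ under the stated hypotheses) fill in exactly the details left implicit there; those parts are fine.

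The genuine gap is your fact (b). For $X=T$ the map $p\mapsto T_p$ is \emph{not} injective on rays with $\vert p\vert=1$: every element of $T$ preserves the cyclic (lexicographic) order of $\cC$ and hence preserves ``gap pairs'' (pairs with empty open cyclic interval between them), so any $g\in T$ fixing $v\cdot 01^\infty$ automatically fixes $v\cdot 10^\infty$; likewise $T_{0^\infty}=T_{1^\infty}=F$, since an element of $T$ fixing $0^\infty$ (or $1^\infty$) must send the extreme interval of its domain partition to the extreme interval of its range partition and is therefore order preserving, while all of $F$ fixes both endpoints. So the pathology you attribute solely to $(F,1)$ occurs for $(T,1)$ as well, the separating element of $X_p$ moving $q$ that you propose to construct does not exist for such pairs, and the forward direction of (ii) cannot be closed by your scheme in that regime: taking $p_1=0^\infty$, $p_2=1^\infty$ and $\chi_1=\chi_2$ trivial gives $\Ind_{T_{p_1}}^T\chi_1=\Ind_{T_{p_2}}^T\chi_2=\lambda_{T/F}$ although $[p_1]\neq[p_2]$. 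To repair your write-up you must either add $(T,\vert p_i\vert)=(T,1)$ to the excluded cases or treat it separately, e.g.\ by observing that there $T_{p_1}=T_{p_2}=F$, that $\Comm_T(F)=F$ and the normaliser of $F$ in $T$ is $F$ itself (no element of $T$ maps $0^\infty$ to $1^\infty$, as orbits are cofinality classes), so equivalence of the two induced representations is governed solely by equality of the two characters of $F$, with no constraint relating $[p_1]$ and $[p_2]$. For $V$ (all rays), for $T$ with $\vert p_i\vert\neq 1$ or $p_i$ aperiodic, and for $F$ with $\vert p_i\vert\neq 1$, your argument is correct.
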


\subsection{Universal $C^*$-algebras} \label{sec:cuntz-algebra}
\textbf{The Cuntz algebra.}
The Cuntz algebra $\cO := \cO_2$ is the universal $C^*$-algebra with two generators $s_0,s_1$ satisfying the below relations:
$$s_0^*s_0 = s_1^*s_1 = s_0s_0^* + s_1s_1^* = 1.$$
Thus, any representation of $\cO$ on a Hilbert space $\scrH$ is given by two isometries, $S_0$ and $S_1$, with orthogonal ranges that span $\scrH$. 
If $\nu$ is a binary word in $0$'s and $1$'s, then denote $s_\nu$ to be the corresponding composition respecting the order of the digits (hence if $\nu = 01$ then $s_\nu = s_0s_1$).
Birget and Nekrashevyvch independently made the remarkable discovery that Thompson's group $V$ embeds inside the unitary group of the Cuntz algebra $\cU(\cO)$ \cite{Birget04, Nekrashevych04}. Indeed, take $g \in V$ and let $\{\nu_i\}_{i=1}^n$, $\{\omega_i\}_{i=1}^n$ be vertices in $\Ver$ such that $g(\omega_i \cdot x) = \nu_{\kappa(i)} \cdot x$ for $x \in \cC$ and some permutation $\kappa$.
The formula
\[V \ni g \mapsto \sum_{i=1}^ns_{\nu_i}s^*_{\omega_i} \in \cO\]
defines an embedding of $V$ into $\cU(\cO)$. 
In fact, $V$ corresponds to the normaliser subgroup of the diagonal sub-algebra $\cA$ inside $\cO$ (where $\cA$ is generated by all the projections $s_{\nu}s_{\nu}^*$).
\begin{center}{\bf From now on we identify $V$ and its image inside $\cO$.}\end{center}
Consequently, every representation of $\cO$ restricts to a (unitary) representation of $V$.

\textbf{The Pythagorean algebra.}
The Pythagorean algebra is the universal $C^*$-algebra $\cP$ with two generators $a$,$b$ satisfying the Pythagorean equality:
\[a^*a+b^*b = 1.\]
Hence, $a\mapsto s_0^*,b\mapsto s_1^*$ defines a surjective *-morphism $\cP\onto\cO$.
Note that $\cP$ has many (non-zero) finite-dimensional representations while $\cO$ has none.

\subsection{Pythagorean representations} \label{sec:def-pyth}
We introduce the specific class of Jones' representations that we will focus on.

\subsubsection{Pythagorean module}\label{sec:P-module}
Here is the main concept of our study.

\begin{definition} \label{universal pythag algebra definition}
	A \emph{Pythagorean module} (in short P-module) is a triple $m=(A,B,\fH)$ where $\fH$ is a Hilbert space and $A,B \in B(\fH)$ are bounded linear operators satisfying the so-called Pythagorean equality
	\[A^*A+B^*B = \id_{\fH}\]
	where $\id_\fH$ is the identity operator of $\fH$ and $A^*$ is the adjoint of $A$. For convenience, we may interchangeably refer to $\fH$ and $m$ as being a P-module.
\end{definition}

For P-modules $m = (A,B,\fH),\ \ti m = (\ti A, \ti B, \ti \fH)$ we say that:
\begin{itemize}
	\item $\fK \subset \fH$ defines a \textit{sub-module} if $\fK$ is closed under $A$ and $B$, in which case we equip $\fK$ with the P-module structure obtained by taking the restrictions of $A$ and $B$;
	\item $m$ is \textit{irreducible} if $m$ does not contain any proper non-trivial sub-modules;
	\item $\theta : \fH \rightarrow \ti \fH$ is an \textit{intertwinner} or \textit{morphism} between $m$ and $\ti m$ if it is a bounded linear operator satisfying
	\[\theta \circ A = \ti A \circ \theta \textrm{ and } \theta \circ B = \ti B \circ \theta;\]
	\item $m$ and $\ti m$ are \textit{unitarily equivalent} (we often drop the term ``unitarily'') if there exists a unitary intertwinner between them. In that case we write $m\cong \ti m$;
	\item $\fK \subset \fH$ is a \textit{complete} sub-module of $m$ if the orthogonal complement $\fK^\perp$ of the subspace $\fH$ inside $\fH$ does not contain any non-trivial sub-modules;
	\item $m$ is a \textit{full} sub-module if $m$ does not contain any proper complete sub-modules;
	\item $\fZ \subset \fH$ is a \textit{residual} subspace if $\fZ$ does not contain any non-trivial sub-modules and $\fZ^\perp$ is a sub-module of $m$.
\end{itemize}

\begin{remark} The P-modules and their morphisms form a category denoted $\Mod(\cP)$. 
Let $\Rep(\cP)$ denote the usual category of representations of the $C^*$-algebra $\cP$. Then $\Mod(\cP)$ and $\Rep(\cP)$ have same class of objects. However, there are more morphisms in $\Mod(\cP)$ than there are in $\Rep(\cP)$. 
Although, it can be proven that a morphism between two {\it full} P-module is in fact a morphism of the associated representations.
\end{remark}

\subsubsection{From a P-module to a Hilbert space}
Fix a P-module $(A,B,\fH)$. 
For each tree $t$ with $n$ leaves we consider the Hilbert space $\fH_t:=\fH^{\Leaf(t)}$ of all maps from the leaves-set of $t$ to $\fH$. We identify $\fH_t$ with $\fH^{\oplus n}$ and write $(t,\xi)$ for an element of it. We may write $\xi_\ell$ or $\xi_i$ for the component corresponding to the leaf $\ell$ of $t$ or to the $i$th leaf.
For each forest $f$ with $n$ roots and $m$ leaves we have an isometry 
$$\Phi(f):\fH_t\to \fH_{ft}$$
obtained by placing the operator $R:=A\oplus B:\fH\to \fH\oplus\fH$ at each node of $f$. 
For instance $\Phi(I\otimes \Y)(\Y,\xi)=((I\ot \Y)\Y, \xi_1, A\xi_2,B\xi_2).$
This defines a functor $\Phi:\cF\to\Hilb$ from the category of binary forests to the category of Hilbert spaces. 
It is monoidal for the horizontal concatenation of forests and the direct sum of Hilbert spaces.

This forms a directed system of Hilbert spaces, indexed by the directed set of trees $\fT$, with maps being the $\Phi(f)$.
The limit is a pre-Hilbert space $\scrK:=\varinjlim_{t\in\fT}\fH_t$ that we complete into $\scrH$. (It indeed has an inner-product because all the $\Phi(f)$ are isometric.)
Equivalently, $\scrH$ is the disjoint union of the $\fH_t$ modulo the equivalence relation generated by $(t,\xi)\sim (ft,\Phi(f)\xi).$
We write $[t,\xi]$ for the class of $(t,\xi)$ inside $\scrH$ and note that $(t,\xi)\mapsto [t,\xi]$ defines an isometric embedding $\fH_t\into \scrH$. 
Moreover, $\xi\mapsto (I,\xi)$ defines an isomorphism $\fH\simeq \fH_I$. 
We will often identify $\fH$ with $\fH_I$ and $\fH_t$ with its image inside $\scrH$.
We note that $\dim(\scrH)=\infty$ (except when $\fH=\{0\}$).

\subsubsection{Partial Isometries on $\scrH$} \label{subsec:partial-isom}

Fix $\nu \in \Ver$ (i.e.~$\nu$ is a finite binary sequence) and consider $[t, \xi] \in \scrK$. Up to growing $t$ we can assume that $\nu$ is a vertex of $t$. Define $t_\nu$ to be the sub-tree of $t$ with root $\nu$ and whose leaves are the leaves of $t$ which are children of $\nu$. Then we set $\tau_\nu([t, \xi]) := [t_\nu, \eta]$ where $\eta$ is the decoration of the leaves of $t$ in $[t, \xi]$ that are children of $\nu$. It can be shown that $\tau_\nu$ is well-defined and extends to a surjective partial isometry from $\scrH$ onto itself 
(see \cite[Section 2.1]{Brothier-Wijesena22} for details).

We will be considering projections $\rho_\nu:=\tau_\nu^*\tau_\nu$.
More generally, consider $\tau^*_\nu\tau_\omega$ the partial isometry which ``snips'' the tree at $\omega$ and attaches the resulting sub-tree, along with its components, at the vertex $\nu$ while setting all other components to $0$. Here is an example:

\begin{center}
	\begin{tikzpicture}[baseline=0cm, scale = 1]
		\draw (0,0)--(-.7, -.5);
		\draw (0,0)--(.7, -.5);
		\draw (-.7, -.5)--(-1.1, -1);
		\draw (-.7, -.5)--(-.3, -1);
		\draw[thick] (.7, -.5)--(.3, -1);
		\draw[thick] (.7, -.5)--(1.1, -1);
		
		\node[label={[yshift=-22pt] \normalsize $\xi_1$}] at (-1.1, -1) {};
		\node[label={[yshift=-22pt] \normalsize $\xi_2$}] at (-.3, -1) {};
		\node[label={[yshift=-22pt] \normalsize $\xi_3$}] at (.3, -1) {};
		\node[label={[yshift=-22pt] \normalsize $\xi_4$}] at (1.1, -1) {};
		
		\node[label={[yshift= -3pt] \normalsize $\tau_{1}$}] at (1.6, -.7) {$\longmapsto$};
	\end{tikzpicture}%
	\begin{tikzpicture}[baseline=0cm, scale = 1]
		\draw[thick] (0,-.3)--(-.5, -.8);
		\draw[thick] (0,-.3)--(.5, -.8);
		
		\node[label={[yshift=-22pt] \normalsize $\xi_3$}] at (-.5, -.8) {};
		\node[label={[yshift=-22pt] \normalsize $\xi_4$}] at (.5, -.8) {};
		
		\node[label={[yshift= -3pt] \normalsize $\tau^*_{0}$}] at (1.1, -.7) {$\longmapsto$};
	\end{tikzpicture}%
	\begin{tikzpicture}[baseline=0cm, scale = 1]
		\draw (0,0)--(-.7, -.5);
		\draw (0,0)--(.7, -.5);
		\draw[thick] (-.7, -.5)--(-1.1, -1);
		\draw[thick] (-.7, -.5)--(-.3, -1);
		
		\node[label={[yshift=-22pt] \normalsize $\xi_3$}] at (-1.1, -1) {};
		\node[label={[yshift=-22pt] \normalsize $\xi_4$}] at (-.3, -1) {};
		\node[label={[yshift=-22pt] \normalsize $0$}] at (.7, -.5) {};		
	\end{tikzpicture}%
\end{center}

\begin{observation}
Note that $(\tau_0, \tau_1, \scrH)$ forms a P-module. Moreover, $\tau_0\restriction_\fH = A$ and $\tau_1\restriction_\fH = B$. Hence, $(A,B,\fH)$ is a sub-module of $(\tau_0, \tau_1, \scrH)$. 
\end{observation}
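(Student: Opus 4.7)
The plan is to verify the three claims of the observation in the order they appear: compute the restrictions of $\tau_0, \tau_1$ to $\fH \cong \fH_I$, deduce that $\fH$ is a sub-module, and then verify the Pythagorean equality $\tau_0^*\tau_0 + \tau_1^*\tau_1 = \id_{\scrH}$ on the ambient Hilbert space.

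First I would take $\xi \in \fH$, identified with $[I,\xi] \in \fH_I$. Since the trivial tree $I$ contains neither vertex $0$ nor vertex $1$, I would grow $I$ to the caret $\Y$ via the functorial map $\Phi(\Y): \fH_I \to \fH_\Y$, which, by definition of $\Phi$ and the fact that $R = A \oplus B$ sits at the single node of $\Y$, gives the identification $[I,\xi] = [\Y, (A\xi, B\xi)]$. The sub-tree rooted at vertex $0$ (resp.\ vertex $1$) of $\Y$ is the trivial tree, and the corresponding decoration is $A\xi$ (resp.\ $B\xi$). By definition of $\tau_\nu$ we get $\tau_0([I,\xi]) = [I, A\xi]$ and $\tau_1([I,\xi]) = [I, B\xi]$, which under the identification $\fH \cong \fH_I$ read $\tau_0\restriction_\fH = A$ and $\tau_1\restriction_\fH = B$. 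In particular $\fH_I$ is stable under both $\tau_0$ and $\tau_1$, so it defines a sub-module of $(\tau_0, \tau_1, \scrH)$.

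Next, to obtain the Pythagorean equality on $\scrH$, I would argue that $\tau_0^*\tau_0$ and $\tau_1^*\tau_1$ are mutually orthogonal projections that sum to $\id_{\scrH}$. Since $\tau_\nu$ is a surjective partial isometry, $\tau_\nu^*\tau_\nu$ is the projection onto the image of $\tau_\nu^*$. A representative of a vector in $\tau_0^*(\scrH)$ can be taken of the form $[\Y \circ (s_0, s_1), (\eta_0, 0)]$ (decorations on the right sub-tree vanish), and symmetrically for $\tau_1^*$; these two closed subspaces are orthogonal. Conversely, given any $[t,\xi] \in \scrH$, I would grow $t$ if necessary so that $\Y$ sits at the root, write $t = \Y \circ (s_0,s_1)$, and split $\xi = (\xi_0, \xi_1)$ accordingly, giving $[t,\xi] = [t, (\xi_0, 0)] + [t, (0, \xi_1)]$, a decomposition into the two orthogonal images. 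Therefore $\tau_0^*\tau_0 + \tau_1^*\tau_1 = \id_{\scrH}$.

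The main point requiring care is that $\tau_\nu$ and the decomposition above are well defined on equivalence classes, i.e.\ invariant under the relation $(t,\xi) \sim (ft, \Phi(f)\xi)$. This compatibility is precisely the content of the construction of $\tau_\nu$ recalled from \cite[Section 2.1]{Brothier-Wijesena22}, and boils down to the fact that $R = A \oplus B$ is itself an isometry by the Pythagorean equality satisfied by $(A,B,\fH)$; I would not reprove this but simply invoke it. Combining the three steps yields both that $(\tau_0, \tau_1, \scrH)$ is a P-module and that $(A,B,\fH)$ embeds into it as a sub-module.
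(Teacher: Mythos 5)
Your verification is correct and is exactly the routine check the paper leaves implicit: the identification $[I,\xi]=[\Y,(A\xi,B\xi)]$ gives the restrictions $\tau_0\restriction_\fH=A$, $\tau_1\restriction_\fH=B$, and the orthogonal splitting of any $[t,\xi]$ into its parts supported under the vertices $0$ and $1$ gives $\tau_0^*\tau_0+\tau_1^*\tau_1=\id_{\scrH}$, with well-definedness on equivalence classes correctly delegated to the construction of $\tau_\nu$ in \cite[Section 2.1]{Brothier-Wijesena22}. Since the paper states this as an observation without proof, there is nothing further to compare; your argument fills in the intended details faithfully.
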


\subsubsection{Pythagorean representations from P-modules}
Consider as above our fixed P-module $m=(A,B,\fH)$, the functor $\Phi:\cF\to\Hilb$, and the Hilbert space $\scrH$.
Take $g\in F$ and $[r,\xi]\in \scrH$.
There exists some trees $t,s$ such that $g=[t,s]$.
Now, there exists forests $f,h$ such that $fs=hr$.
We set $$g\cdot [r,\xi]:=[ft,\Phi(h)\xi].$$
In particular, if $s=r$, then $[t,s]\cdot [s,\xi]:=[t,\xi].$
This defines a unitary representation $\sigma:F\act\scrH$ called the Pythagorean representation (in short P-representation) associated to the P-module $m$.
Now, if $v\in V$, then $v=[t,\kappa,s]$ for some permutation $\kappa$.
We set 
$v\cdot [s,\xi]:=[t,\xi_{\kappa}]$ where $\xi_{\kappa}(i):=\xi_{\kappa(i)}$ (the $i$th coordinate of $\xi_{\kappa}$ is the $\kappa(i)$th coordinate of $\xi$).
This define a unitary representation $\sigma^T,\sigma^V$ of $T,V$ on the same Hilbert space $\scrH$.
Using the partial isometries of the previous subsection we deduce the following formula:
\begin{equation} \label{action rearrange equation}
	\sigma(v) = \sum_{i = 1}^n \tau^*_{\nu_{\kappa(i)}} \tau_{\omega_i},
\end{equation}
where $\nu_i,\omega_i$ are the $i$th leaves of $t,s$, respectively, and where $t$ has $n$ leaves.

\begin{example}An example of the action $\sigma$ is shown below. In this case, $\sigma$ only changes the tree while retaining the original decoration.

\begin{center}
	\begin{tikzpicture}[baseline=0cm]
		\draw (0,0)--(-.5, -.5);
		\draw (0,0)--(.5, -.5);
		\draw (-.5, -.5)--(-.9, -1);
		\draw (-.5, -.5)--(-.1, -1);
		
		\node[label={\normalsize $\sigma($}] at (-1.1, -1) {};
		\node[label={\normalsize $,$}] at (.65, -1) {};
	\end{tikzpicture}%
	\begin{tikzpicture}[baseline=0cm]
		\draw (0,0)--(-.5, -.5);
		\draw (0,0)--(.5, -.5);
		\draw (.5, -.5)--(.1, -1);
		\draw (.5, -.5)--(.9, -1);
		
		\node[label={\normalsize $)$}] at (1.1, -1) {};
		\node[label={\normalsize $\cdot$}] at (1.35, -.9) {};
	\end{tikzpicture}%
	\begin{tikzpicture}[baseline=0cm]
		\draw (0,0)--(-.5, -.5);
		\draw (0,0)--(.5, -.5);
		\draw (.5, -.5)--(.1, -1);
		\draw (.5, -.5)--(.9, -1);
		
		\node[label={[yshift=-22pt] \normalsize $\xi_1$}] at (-.5, -.5) {};
		\node[label={[yshift=-22pt] \normalsize $\xi_2$}] at (.1, -1) {};
		\node[label={[yshift=-22pt] \normalsize $\xi_3$}] at (.9, -1) {};	
		
		\node[label={\normalsize $=$}] at (1.35, -1) {};
	\end{tikzpicture}%
	\begin{tikzpicture}[baseline=0cm]
		\draw (0,0)--(-.5, -.5);
		\draw (0,0)--(.5, -.5);
		\draw (-.5, -.5)--(-.9, -1);
		\draw (-.5, -.5)--(-.1, -1);
		
		\node[label={[yshift=-22pt] \normalsize $\xi_1$}] at (-.9, -1) {};
		\node[label={[yshift=-22pt] \normalsize $\xi_2$}] at (-.1, -1) {};
		\node[label={[yshift=-22pt] \normalsize $\xi_3$}] at (.5, -.5) {};		
	\end{tikzpicture}%
\end{center}
\end{example}

\textbf{Extension to $\cO$.}
Using the partial isometries $\tau_\nu$ we can easily observe that $\sigma^V$ extends to a representation $\sigma^\cO$ of $\cO$ via the formula
\[\sigma^\cO : \cO \rightarrow B(\scrH),\ s_0 \mapsto \tau_0^*,\ s_1 \mapsto \tau_1^*.\]
Using \eqref{action rearrange equation} we deduce that the representation $\sigma^\cO$ restricted to $V$ is equal to $\sigma^V$.
Surprisingly, every representation of $\cO$ can be obtained in this manner (\cite[Proposition 7.1]{Brothier-Jones19a}). 
Indeed, if $(\pi,\scrH)\in \Rep(\cO)$, then $m=(\pi(s_0)^*,\pi(s_1)^*,\scrH)$ is a P-module. If we re-apply the construction of above to $m$ we obtain a new representation $\sigma^\cO$ of $\cO$ which is equivalent to $\pi$.

\subsubsection{Sub-modules and sub-representations}
Sub-modules of $\scrH$ define sub-representations of $\sigma$ as explained below.

\begin{definition}
	Let $\scrX \subset \scrH$ be a sub-module (i.e.~$\tau_i(\scrX)\subset \scrX$ for $i=0,1$). Define the closed subspace
	\[\sub{\scrX} := \cspan\{\cup_{\nu \in \Ver} \tau_\nu^*(\scrX)\} \subset \scrH.\]
	By construction $\scrX$ is closed under the action of $\sigma^\cO$ and defines a sub-representation denoted $\sigma^\cO_\scrX$. Similarly, we define $\sigma^Y_\scrX$ for $Y=F,T,V$.
\end{definition}

Informally, $\sub{\scrX}$ is the closure of the set of trees with leaves decorated by vectors in $\scrX$.

\begin{observation}
If $\scrX$ is a sub-module of $\scrH$, then the P-representation associated to the P-module $(\tau_0\restriction_\scrX, \tau_1\restriction_\scrX, \scrX)$ is equivalent to $\sigma_\scrX$. Hence, all sub-representations that are induced by sub-modules are also Pythagorean. For the remainder of the article we will freely identify these two representations.
\end{observation}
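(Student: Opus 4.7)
The plan is to exhibit an explicit unitary intertwiner between the two representations in question. Let me write $\sigma'$ for the P-representation constructed from the P-module $m' := (\tau_0\restriction_\scrX, \tau_1\restriction_\scrX, \scrX)$, acting on the Hilbert space $\scrH' := \varinjlim_{t\in\cT}\scrX^{\Leaf(t)}$ built by the Jones construction applied to $m'$. I will construct a map
\[\Psi: \scrH' \to \sub{\scrX},\quad [t,\xi] \mapsto \sum_{\ell \in \Leaf(t)} \tau_{\nu_\ell}^* \xi_\ell,\]
where $\nu_\ell$ is the word addressing the leaf $\ell$ of $t$, and the $\xi_\ell \in \scrX \subset \scrH$, and show it is a unitary intertwiner.

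\textbf{Well-definedness.} The equivalence relation defining $\scrH'$ is generated by $(t,\xi) \sim (ft, \Phi'(f)\xi)$. By induction on the caret count it suffices to treat elementary forests $f_{k,n}$, where the required identity reduces to
\[\tau_{\nu_k}^*\xi_k = \tau_{\nu_k \cdot 0}^* (\tau_0\xi_k) + \tau_{\nu_k \cdot 1}^* (\tau_1\xi_k).\]
Under the composition convention dictated by the Birget--Nekrashevych embedding one has $\tau_{\nu i}^* = \tau_\nu^* \tau_i^*$, so the right-hand side equals $\tau_{\nu_k}^*(\tau_0^*\tau_0 + \tau_1^*\tau_1)\xi_k$, and this is $\tau_{\nu_k}^*\xi_k$ by the Pythagorean equality satisfied by the P-module $(\tau_0,\tau_1,\scrH)$.

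\textbf{Isometry.} On vectors of the form $[t,\xi],[t,\eta]$ with a common tree, I compute
\[\langle \Psi[t,\xi], \Psi[t,\eta]\rangle = \sum_{\ell,m} \langle \tau_{\nu_m}\tau_{\nu_\ell}^* \xi_\ell, \eta_m\rangle.\]
Because $s_0,s_1$ satisfy the Cuntz relations, their images $\tau_0^*,\tau_1^*$ are isometries with orthogonal ranges; inductively this gives $\tau_{\nu_m}\tau_{\nu_\ell}^* = 0$ whenever $\nu_\ell \neq \nu_m$ are incomparable (as distinct leaves of a common tree always are), and $\tau_{\nu_\ell}\tau_{\nu_\ell}^* = \id_\scrH$. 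The sum collapses to $\sum_\ell \langle \xi_\ell,\eta_\ell\rangle = \langle[t,\xi],[t,\eta]\rangle_{\scrH'}$. Since every pair of elements of $\scrH'$ admits a common representative tree (by the directed structure of $\cT$), $\Psi$ is a densely defined isometry, extending to an isometry on $\scrH'$.

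\textbf{Dense range and intertwining.} By construction $\Psi(\scrH')$ contains every $\tau_\nu^*\xi$ with $\nu\in\Ver$ and $\xi\in\scrX$, so its closure is $\sub{\scrX}$ by definition. For the intertwining with $\sigma^V$ and $\sigma^\cO$, using the representative form $g = [t,\kappa,s]$ and formula \eqref{action rearrange equation} one compares $\Psi\circ\sigma'(g)[s,\xi]$ with $\sigma(g)\circ\Psi[s,\xi]$: both become a sum $\sum_i\tau_{\nu_{\kappa(i)}}^* \xi_i$ (after applying the same orthogonality $\tau_{\omega_j}\tau_{\omega_i}^* = \delta_{ij}\id$ used above). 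The intertwining with the Cuntz generators $s_0,s_1$ is immediate since $\sigma'^\cO(s_i) = (\tau_i\restriction_\scrX)^*$ agrees with the restriction of $\sigma^\cO(s_i) = \tau_i^*$ to $\sub{\scrX}$. The main (but routine) obstacle is bookkeeping the convention $\tau_{\nu i} = \tau_i \tau_\nu$ consistently; once fixed, every verification is a direct application of the Cuntz/Pythagorean relations.
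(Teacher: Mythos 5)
Your proof is correct and is precisely the canonical identification that the paper leaves implicit (the statement appears there as an unproved Observation): the unitary $[t,\xi]\mapsto\sum_{\ell}\tau^*_{\nu_\ell}\xi_\ell$ from the limit space of $(\tau_0\restriction_\scrX,\tau_1\restriction_\scrX,\scrX)$ onto $\sub{\scrX}$, with well-definedness, isometry, density of the range and intertwining all reduced to the Cuntz relations satisfied by $\tau_0^*,\tau_1^*$ and the convention $\tau_{\nu i}=\tau_i\tau_\nu$. One cosmetic slip: $\sigma'^{\cO}(s_i)$ is the adjoint of the partial isometry of the \emph{new} limit space $\scrH'$, not literally $(\tau_i\restriction_\scrX)^*$ (a type mismatch), but the identity your computation actually needs and does establish, namely $\Psi\circ\sigma'^{\cO}(s_i)=\tau_i^*\circ\Psi$ with range $\sub{\scrX}$ invariant under $\tau_i^*$, is correct.
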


\subsubsection{Functoriality of Jones' technology} \label{subsec:p-functor}
Recall that $\Mod(\cP)$ denotes the category of P-modules with morphisms being bounded linear maps intertwinning the $A$'s and $B$'s. Moreover, $\Rep(X)$ denotes the usual category of unitary representations when $X=F,T,V$ and the usual category of bounded linear *-representations when $X=\cO$.

Let $m = (A,B,\fH)$ and $\ti m = (\ti A, \ti B, \ti \fH)$ be two P-modules with associated P-representations $(\sigma, \scrH)$ and $(\ti\sigma, \ti\scrH)$, and functors $\Phi,\ti\Phi$, respectively. Let $\theta : \fH \rightarrow \ti \fH$ be an intertwinner between the two P-modules. For any tree $t$, this gives a map $\theta_t : \fH_t \rightarrow \ti\fH_t$ by
\[\theta_t : (t, \xi_1, \xi_2, \dots, \xi_n) \mapsto (t, \theta(\xi_1), \theta(\xi_2), \dots, \theta(\xi_n))\]
where $n$ is the number of leaves of $t$. Diagrammatically, $\theta_t$ works as follows when $t=\Y$:
\begin{center}
	\begin{tikzpicture}[baseline=0cm, scale = 1]
		\draw (0,0)--(-.5, -.5);
		\draw (0,0)--(.5, -.5);
		
		\node[label={[yshift=-22pt] \normalsize $\xi_1$}] at (-.5, -.5) {};
		\node[label={[yshift=-22pt] \normalsize $\xi_2$}] at (.5, -.5) {};
		
		\node[label={[yshift= -3pt] \normalsize $\theta_{\Y}$}] at (1.4, -.7) {$\longmapsto$};
	\end{tikzpicture}%
	\begin{tikzpicture}[baseline=0cm, scale = 1]
		\draw (0,0)--(-.5, -.5);
		\draw (0,0)--(.5, -.5);
		
		\node[label={[yshift=-22pt] \normalsize $\theta(\xi_1)$}] at (-.55, -.5) {};
		\node[label={[yshift=-22pt] \normalsize $\theta(\xi_2)$}] at (.55, -.5) {};		
	\end{tikzpicture}.
\end{center}
Since $\theta$ is an intertwinner this implies
\begin{equation} \label{eqn:intertwinner-direct-system}
	\theta_t \circ \Phi(f) = \ti\Phi(f) \circ \theta_t
\end{equation}
for any composable forest $f$. 
From there we deduce a bounded linear map 
$$\Theta:\scrH\to \ti\scrH,\ [t,\xi]\mapsto [t,\theta_t(\xi)]$$
that intertwinnes the P-representations.
We deduce four functors that we name the {\it Pythagorean functors}:
$$\Pi^X:\Mod(\cP)\to \Rep(X).$$
\textbf{Notation.} From a P-module $m=(A,B,\fH)$ we have canonically constructed the P-representations $\sigma^X=\Pi^X(m)$, for $X = F,T,V,\cO$, all acting on the same Hilbert space $\scrH$. 
We may drop the super-script $X$ if it is clear from context or when 
making statements that hold true for all $X$. Additionally, despite all representations of $\cO$ coming from P-modules, we may term a representation of $\cO$ as being ``Pythagorean'' to emphasise we are viewing it as arising from a P-module

\begin{center}\textbf{For the remainder of the paper, we shall assume that all P-modules are \emph{finite-dimensional}. }
\end{center}

%%%%%%%%%%%%%%%%%END PRELIMINARIES%%%%%%%%%%%%%%%%%%%%%%%%%%%%%%
%%%%%%%%%%%%%%%%%%%%%%%%%%%%%%%%%%%%%%%%%%%%%%%%%%%%%%%%%%%

%%%%%%%%%%%%%%%%%%%%%%%%%%%%%%%%%%%%%%%%%%%%%%%%%%%%%%%%%%%
%%%%%%%%%%%%%%%%%%%STRUCTURE%%%%%%%%%%%%%%%%%%%%%%%%%%%%%

\section{Decomposition of P-modules} \label{sec:p-mod-decomp}
In this section we will introduce the important notions of the ``atomic'' and ``diffuse'' parts of a P-module. Furthermore, we will develop a powerful decomposition theory. 

\subsection{Complete sub-modules.}
It should be emphasised that sub-modules of P-modules are only required to be closed under $A$ and $B$, but not necessarily under $A^*$ or $B^*$. 
Hence, the orthogonal complement of a sub-module may not be a sub-module making $\Mod(\cP)$ not semi-simple (unlike $\Rep(\cP)$).
However, a weaker property holds as shown below. 
We refer to Section \ref{sec:P-module} for the definition of residual and complete.

\begin{lemma} \label{lem:p-mod-irr-decomp}
	Let $\fH$ be a P-module. 
There exists $n\geq 1$, some irreducible sub-modules $\fH_i\subset \fH, 1\leq i\leq n$, and a vector subspace $\fH_{\res}\subset\fH$ such that $\fH=(\oplus_{i=1}^n \fH_i)	\oplus \fH_{\res}.$
We call $\fH_{\res}$ the \emph{residual subspace} of $\fH$ and $\fH_{\comp}:=\oplus_{i=1}^n\fH_i$ the \emph{complete sub-module} of $\fH$.
\end{lemma}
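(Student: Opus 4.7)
The plan is a straightforward dimension maximality argument that extracts an ``as-semisimple-as-possible'' sub-module, with the unavoidable residual piece appearing only as a vector space complement, since orthogonal complements of sub-modules in $\Mod(\cP)$ need not themselves be sub-modules. I may assume $\fH\neq 0$ (the case $\fH=0$ being vacuous).

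First I would observe that every non-zero sub-module $\fK\subset\fH$ contains an irreducible sub-module: since $\dim\fH<\infty$, one can pick a sub-module $\fK'\subset\fK$ of minimal positive dimension, and any proper sub-module of $\fK'$ would have strictly smaller dimension hence be zero, so $\fK'$ is irreducible. In particular $\fH$ itself admits at least one irreducible sub-module.

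Next I would consider the family $\mathcal{S}$ of sub-modules of $\fH$ that can be written as an internal direct sum $\bigoplus_{i=1}^n \fH_i$ of irreducible sub-modules $\fH_i\subset\fH$. The family $\mathcal{S}$ is non-empty and the dimensions of its members are bounded by $\dim(\fH)$, so I pick $\fH_{\comp}=\bigoplus_{i=1}^n \fH_i\in\mathcal{S}$ with $\dim\fH_{\comp}$ maximal. Being a finite sum of sub-modules, $\fH_{\comp}$ is a sub-module. I then set $\fH_{\res}:=\fH_{\comp}^\perp$ (Hilbert space orthogonal complement in $\fH$), which immediately yields the vector space decomposition $\fH=\fH_{\comp}\oplus\fH_{\res}$.

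The only remaining task is to verify that $\fH_{\res}$ carries no non-trivial sub-module (so that $\fH_{\res}$ is genuinely residual and $\fH_{\comp}$ is genuinely complete in the sense of Section~\ref{sec:P-module}). Suppose for contradiction that $\fK\subset\fH_{\res}$ is a non-zero sub-module. By the first observation $\fK$ would contain an irreducible sub-module $\fK'$. Since $\fK'\subset\fH_{\res}$, we have $\fH_{\comp}\cap\fK'=0$, so $\fH_{\comp}+\fK'=\fH_{\comp}\oplus\fK'$ is a sub-module lying in $\mathcal{S}$ of dimension strictly greater than that of $\fH_{\comp}$, contradicting maximality. There is no serious obstacle in this argument; the only point worth emphasising is that $\fH_{\res}$ serves solely as a vector space complement and is generally not a sub-module itself, which is precisely why $\Mod(\cP)$ fails to be semisimple even though it admits this weaker decomposition.
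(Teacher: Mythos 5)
Your argument is correct in substance and is close in spirit to the paper's proof: the paper proceeds greedily, picking an irreducible sub-module $\fK_1\subset\fH$ (which exists by finite-dimensionality, exactly as in your first observation), then repeating inside $\fK_1^\perp$ until the leftover subspace contains no non-trivial sub-module, whereas you replace the iteration by choosing an element of maximal dimension in the family $\mathcal S$ of internal direct sums of irreducible sub-modules; the key contradiction (an irreducible sub-module of the leftover would enlarge the sum) is the same in both treatments, and your version avoids even having to discuss termination. One caveat: as you define $\mathcal S$, its members are only algebraic direct sums, so the maximal element $\fH_{\comp}=\oplus_{i}\fH_i$ you obtain need not have mutually orthogonal summands, while the paper's iteration automatically produces pairwise orthogonal $\fH_i$ (each is chosen inside the orthogonal complement of the previously chosen ones). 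That orthogonality is used implicitly downstream, e.g.\ to conclude $\Pi^X(\fH_{\comp})\cong\oplus_i\Pi^X(\fH_i)$ in Proposition \ref{prop:complete-mod-rep}: the Pythagorean functors split orthogonal direct sums of P-modules, but a merely algebraic splitting into non-orthogonal sub-modules does not obviously induce a direct sum decomposition of $\scrH$. The repair costs nothing: take $\mathcal S$ to consist of \emph{orthogonal} internal direct sums of irreducible sub-modules; your maximality contradiction goes through verbatim, since the new irreducible $\fK'$ lives in $\fH_{\comp}^\perp$ and is therefore orthogonal to all the existing summands. With that adjustment your proof yields exactly the statement, including the orthogonality of $\fH_{\comp}$ and $\fH_{\res}$, which you already have by construction.
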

	
\begin{proof}[Proof of the lemma]
	Consider a P-module $\fH$. If $\fH$ is not irreducible, then there exists an irreducible sub-module $\fK \subset \fH$ (since $\fH$ is finite-dimensional). If $\fK^\perp$ is an irreducible sub-module or a residual subspace we are done. Otherwise, $\fK^\perp$ contains a proper non-trivial sub-module and iteratively repeat the above process which must eventually terminate since $\fH$ is finite-dimensional.
\end{proof}

\begin{example}
Here is an example where the decompositions in $\Mod(\cP)$ is thinner than in $\Rep(\cP).$
Consider
	\[m = ( 
	\begin{pmatrix}
		\frac{1}{\sqrt{2}} & 0 & \frac{1}{\sqrt{6}} \\
		0 & -\frac{1}{\sqrt{2}} & \frac{1}{\sqrt{6}} \\
		0 & 0 & \frac{1}{\sqrt{6}}
	\end{pmatrix},
	\begin{pmatrix}
		\frac{1}{\sqrt{2}} & 0 & -\frac{1}{\sqrt{6}} \\
		0 & -\frac{1}{\sqrt{2}} & -\frac{1}{\sqrt{6}} \\
		0 & 0 & \frac{1}{\sqrt{6}}
	\end{pmatrix},
	\C^3).\]
	Let $e_1,e_2,e_3$ be the standard basis elements of $\C^3$. We have $\fH_{\comp} = \C e_1 \oplus \C e_2$ which decomposes into irreducible sub-modules $(1\sqrt{2},1\sqrt{2},\C) \oplus (-1\sqrt{2},-1\sqrt{2},\C)$, and $\fH_{\res} = \C e_3$. Yet, it is easy to verify that the associated $^*$-representation of $\cP$ on $\C^3$ is irreducible.
\end{example}

\begin{remark}
	If $\fH$ is full (that is, $\fH_{\comp} = \fH$ and $\fH_{\res} = \{0\}$) then the decomposition with respect to the action of the P-module and the action of the Pythagorean algebra $\cP$ do coincide. This is because in a full P-module, every $A,B$-invariant subspace is also a $A^*,B^*$-invariant subspace. 
\end{remark}

\begin{proposition} \label{prop:complete-mod-rep}
	Let $m = (A,B,\fH)$ be a P-module decomposed as in in Lemma \ref{lem:p-mod-irr-decomp} and let $\fK\subset\fH$ be a sub-module. For each $X=F,T,V,\cO$ we have:
	\begin{enumerate}[i]
		\item $\Pi^X(\fK) \cong \Pi^X(\fH)$ if and only if $\fK$ is a complete sub-module;
		\item there exists a sub-module $\fX \subset \fK^\perp \subset \fH$ such that $\Pi^X(\fK) \oplus \Pi^X(\fX) \cong \Pi^X(\fH)$;
		\item $\Pi^X(\fH) \cong \oplus_{i=1}^n \Pi^X(\fH_i).$
	\end{enumerate}
\end{proposition}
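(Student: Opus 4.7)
The plan rests on the following structural observation: the triple $(\tau_0,\tau_1,\scrH)$ is not merely a P-module but a full representation of the Cuntz algebra $\cO$, since $\tau_i\tau_i^*=\id_{\scrH}$ holds in addition to $\tau_0^*\tau_0+\tau_1^*\tau_1=\id_{\scrH}$. In particular, any P-submodule of $\scrH$ is automatically closed under the adjoints $\tau_i^*$, so its orthogonal complement in $\scrH$ is again a P-submodule. This ``Cuntz property'' on the ambient $\scrH$, unavailable at the level of the finite-dimensional $\fH$, is what permits orthogonal decomposition arguments.

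I will first establish two auxiliary lemmas. The \emph{extraction lemma}: every non-zero P-submodule $\scrL\subset\scrH$ meets $\fH$ non-trivially. Indeed, given $0\neq[t,\xi]\in\scrL$, pick a leaf $w$ of $t$ with $\xi_w\neq 0$; then $\tau_w([t,\xi])=\xi_w\in\scrL\cap\fH$, using that $\scrL$ is closed under every $\tau_\nu$ since it is closed under $\tau_0,\tau_1$. The \emph{orthogonality lemma}: if $\fL_1,\fL_2\subset\fH$ are P-submodules with $\fL_1\perp\fL_2$, then $\sub{\fL_1}\perp\sub{\fL_2}$ inside $\scrH$ and consequently $\sub{\fL_1\oplus\fL_2}=\sub{\fL_1}\oplus\sub{\fL_2}$. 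The key point is that for $\xi\in\fL_i$ and any vertex $\nu$, $\tau_\nu\xi$ equals a word in $A,B$ applied to $\xi$ and therefore stays in $\fL_i$, so $\langle\tau_\nu^*\xi,\eta\rangle=\langle\xi,\tau_\nu\eta\rangle=0$ for $\eta\in\fL_j$, $j\neq i$.

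For (i), assume $\fK$ is complete and set $\scrL:=\sub{\fK}^\perp\subset\scrH$, which is a P-submodule by the Cuntz property. Its intersection $\scrL\cap\fH$ is a P-submodule of $\fH$ orthogonal to $\fK$, hence contained in $\fK^\perp$; completeness of $\fK$ then forces $\scrL\cap\fH=\{0\}$, and the extraction lemma gives $\scrL=\{0\}$. Thus $\sub{\fK}=\scrH$ and the canonical inclusion realises the desired unitary equivalence $\Pi^X(\fK)\cong\Pi^X(\fH)$. Conversely, if $\fK$ is not complete, pick a non-zero P-submodule $\fL\subset\fK^\perp$; the orthogonality lemma produces $\{0\}\neq\sub{\fL}\perp\sub{\fK}$, so the canonical inclusion is proper. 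Reading the equivalence in (i) as equality via the canonical inclusion settles the statement; ruling out ``abstract'' cancellation (that $\Pi^X(\fH)\cong\Pi^X(\fH)\oplus\pi$ might hold for $\pi\neq 0$) is the one non-formal point and I expect to invoke an appropriate invariant, e.g.\ from the atomic/diffuse decomposition of Theorem \ref{theo:atom-diff-decomp}.

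Part (ii) follows from (i): by finite-dimensionality pick a P-submodule $\fX\subset\fK^\perp$ of maximal dimension; then $\fK\oplus\fX$ is complete, since any non-zero P-submodule in $(\fK\oplus\fX)^\perp\subset\fK^\perp$ would enlarge $\fX$. Applying (i) and the orthogonality lemma yields $\Pi^X(\fH)\cong\Pi^X(\fK\oplus\fX)\cong\Pi^X(\fK)\oplus\Pi^X(\fX)$. Part (iii) is the special case where one applies (i) to the complete sub-module $\fH_{\comp}=\bigoplus_{i=1}^n\fH_i$ and iterates the orthogonality lemma along the mutually orthogonal irreducible components. The main obstacle is verifying the Cuntz property of $(\tau_0,\tau_1,\scrH)$: this requires a careful direct check that $\tau_i\tau_i^*=\id_{\scrH}$ from the tree-snipping description of the partial isometries, but once in hand everything else reduces to finite-dimensional linear algebra on $\fH$ together with the two lemmas above.
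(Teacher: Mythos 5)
Your proposal has two genuine problems, and they sit exactly where the analytic content of the proposition lies. First, the structural claim you build on is false: a subspace of $\scrH$ closed under $\tau_0,\tau_1$ is \emph{not} automatically closed under $\tau_0^*,\tau_1^*$, and orthogonal complements of P-submodules of $\scrH$ need not be P-submodules. The copy $\fH\cong\fH_I\subset\scrH$ is itself a counterexample: it is a sub-module of $(\tau_0,\tau_1,\scrH)$ by the paper's Observation, yet $\tau_0^*\xi=[\Y,(\xi,0)]$ lies in $\fH_I$ only if $\xi=A\eta$ for some $\eta$ with $B\eta=0$; for $m=(1/\sqrt2,1/\sqrt2,\C)$ (Koopman picture, $\fH=$ constants in $L^2[0,1]$) this fails, and the complement of the constants is visibly not $\tau_0$-invariant. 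The particular space you need, $\sub{\fK}^\perp$, \emph{is} $\tau_0,\tau_1$-invariant, but only because $\sub{\fK}$ is invariant under the whole $*$-algebra $\sigma^\cO(\cO)$, not because of the general principle you state. (Your orthogonality lemma is essentially fine, though you must also treat $\langle\tau_\nu^*\xi,\tau_\mu^*\eta\rangle$ using the relations for $\tau_\mu\tau_\nu^*$, not only $\langle\tau_\nu^*\xi,\eta\rangle$.)

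Second, and more seriously, the extraction lemma is unproved, and it is precisely the hard step. A nonzero element of a closed invariant subspace $\scrL\subset\scrH$ need not have the form $[t,\xi]$: such vectors only constitute the dense algebraic subspace $\scrK$, and a closed subspace of a Hilbert space can meet a dense subspace trivially, so your one-line argument does not apply to general elements of $\scrL$. Establishing that $\scrL\cap\fH\neq\{0\}$ — or what you actually need, that $\sub{\fK}=\scrH$ whenever $\fK$ is complete — requires a limiting argument; the paper does this via compactness of the unit ball of $\fH_{\res}$: setting $\psi_n(\xi)=\sum_{|w|=n}\norm{P(w\xi)}^2$ with $P$ the projection onto $\fH_{\res}$, it shows some maximum $M_N<1$ and then $\psi_n(\xi)\to 0$, which exhibits every residual vector as a norm-limit of vectors in $\sub{\fH_{\comp}}$. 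Nothing in your proposal replaces this step, so the main implication is assumed rather than proved. Finally, your suggested fix for the ``abstract cancellation'' issue — invoking Theorem \ref{theo:atom-diff-decomp} — is circular, since items i and iii of that theorem are exactly this proposition and are deduced from it. Your reductions of (ii) and (iii) to (i) are fine, so the whole proposal stands or falls with the extraction lemma, and as written it falls.
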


\begin{proof}
The last two items immediately follows from the first item and Lemma \ref{lem:p-mod-irr-decomp}. Hence we only need to prove the first item. For the forward implication, let $\fX \subset \fK^\perp$ be a sub-module. Then $\sub{\fX} \subset \sub{\fK}^\perp$. However, $\sub{\fK} = \scrH$ and thus $\sub{\fK}^\perp = \{0\}$. This implies that $\fX$ is the trivial sub-modules and $\fK$ is a complete sub-modules.
	
The converse of item i is equivalent to showing that $\sub{\fH_{\comp}} = \scrH$ as $\fH_{\comp}$ is contained inside every complete sub-module of $\fH$. Since $\sub{\fH} = \scrH$, it is suffice to show that $\fH_{\res} = \fH_{\comp}^\perp \subset \sub{\fH_{\comp}}$.
Heuristically we will proceed as follows: fix $\xi\in\fH_{\res}$ and let $\xi_t$ its representative in $\fH_t$ inside $\scrH$ (i.e.~$\xi_t:=\Phi(t)\xi$).
Then, when $t$ grows we note that the distance between each component of $\xi_t$ and $\fH_{\comp}$ tends to zero. Moreover, we will show that this convergence is ``global'' among all the leaves in such a way that the distance between the whole vector $\xi_t$ and $\sub{\fH_{\comp}}$ is manifestly small for $t$ large. 
This will be achieved by making a compactness argument on the closed unit ball $(\fH_{\res})_1$.
	
Assume $\fH_{\res}$ is non-zero (the zero case being trivially true). 
Let $P:\fH\to\fH$ be the orthogonal projection onto $\fH_{\res}$. 
For each $n$ consider $t_n$ the complete binary with $2^n$ leaves all at distance $n$ from the root and the isometry
$$\varphi_n:\fH\to \fH_{t_n},\ \xi\mapsto \Phi(t_n)\xi.$$
Note that $\fH_{t_n}$ is isomorphic to the $2^n$-fold direct sum of $\fH$ and the map is conjugated to 
$\xi\mapsto \bigoplus_{|w|=n} w\xi$
where $w$ is any word in $A,B$ made of $n$ letters.
Define now
\[\psi_n : \fH_{\res} \rightarrow \R,\ \xi \mapsto \sum_{\vert w \vert = n} \norm{P(w\xi)}^2\]
which is the restriction to $\fH_{\res}$ of the composition of $\varphi_n$, $P\ot \id$, and the norm square.
This is obviously continuous with operator norm smaller or equal to $1$.
Since the space $\fH$, and thus $\fH_{\res}$, is finite-dimensional its closed unit ball $(\fH_{\res})_1$ is compact. Therefore, each of the maps $\psi_n$ restricted to $(\fH_{\res})_1$ attain a maximum value $M_n\leq 1$ at some vector $\xi_n \in (\fH_{\res})_1$.
The remainder of the proof will be separated into individual claims.
	
	\textbf{Claim 1:}
	There exists a natural number $N > 0$ such that $M_N < 1$.
	
	Suppose that $M_n = 1$ for all $n$. Again, by appealing to the compactness of the closed unit ball, there exists a sub-sequence $(\xi_{m_n})_n$ which converges to some $\xi \in (\fH_{\res})_1$ and satisfies $\psi_n(\xi)=1$.
	This implies that necessarily $w\xi$ is in $\fH_{\res}$ for all words $w$. Hence, $\xi$ defines a non-zero sub-module of $\fH_{\res}$ yielding a contradiction.
		
	For the remainder of the proof we fix a unit vector $\xi \in \fH_{\res}$. Since $\fH_{\comp}$ is a sub-module it is clear that $(\psi_{n}(\xi) : n > 0)$ forms a decreasing sequence which is bounded below by $0$. Hence, the sequence limits to some constant $M_\xi \in [0,1)$. 
	
	\textbf{Claim 2.}
	The constant $M_\xi$ is equal to zero.
	
	Assume $M_\xi$ is strictly positive.
	Using Claim 1 we fix $N\geq 1$ such that $M_N<1$.
	There exists $K>0$ 
	such that $\psi_k(\xi) - M _\xi< M_\xi(1-M_N)$ for all $k \geq K$. 
	We can then write for any $j > 0$:
	\[\psi_{j+k}(\xi) = \sum_{|v|=j,|u|=k} \norm{P(vu\xi)}^2 = \sum_{|v|=j,|u|=k}  \norm{Pv(Pu\xi)}^2 = \sum_{|u| = k} \psi_j(P(u\xi))\]
	where the second equality follows by noting that $PvP = Pv$ for all words $v$ because $\fH_{\comp}$ is invariant under $A,B$. Hence we obtain for $k \geq K$:
	\begin{align*}
		\psi_k(\xi) - \psi_{N+k}(\xi) &= \sum_{\vert w \vert = k} \norm{P(w\xi)}^2 - \psi_N(P(w\xi)) \geq \sum_{\vert w \vert = k} \norm{P(w\xi)}^2 - M_N\norm{P(w\xi)}^2 \\
		&= \psi_k(\xi)(1-M_N) \geq M_\xi(1 - M_N). 
	\end{align*}
We have obtained a contradiction.

We are now able to conclude. Indeed, by Claim 2, for each $\epsilon>0$ there exists $k\geq 1$ such that $\psi_k(\xi)<\epsilon.$
Hence, for all $\nu$ of length $k$ there is $\eta_\nu\in\fH_{\comp}$ so that 
\[\norm{\xi - \sum_{\vert \nu \vert = k} \tau^*_\nu(\eta_\nu)}^2 = \psi_k(\xi) < \epsilon.\]
Since $\eta$, the vector of $\fH_{t_n}$ with $\nu$-coordinate being $\tau^*_\nu(\eta_\nu)$, is in $\langle \fH_{\comp}\rangle$ we deduce that the distance $d(\xi,\langle \fH_{\comp}\rangle)$ between $\xi$ and $\langle \fH_{\comp}\rangle$ is smaller than $\epsilon$.
Hence, $d(\xi,\langle \fH_{\comp}\rangle)=0$ and thus $\xi\in\langle \fH_{\comp}\rangle$ since the latter is topologically closed.
\end{proof}

\begin{remark} \label{rem:fH-decomp}
The above proposition motivates the terminology of complete sub-modules and residual subspaces. Indeed, complete sub-modules contain the ``complete'' information of $\Pi(\fH)$ while in contrast residual subspaces are the ``residual'' space left over from complete sub-modules and do not contain any information of $\Pi(\fH)$. 
\end{remark}

\subsection{Classes of vectors in P-modules}
We now introduce two classes of vectors in $\fH$ that will play an important role in our decomposition of Pythagorean representations. Recall we identify rays in $\cC$ as an infinite sequence of $0,1$ which is read from left to right.

\begin{definition} \label{class of vectors definition}
	Fix a P-module $(A,B,\fH)$. Let $p$ be a ray and $\xi$ a vector in $\fH$. 
	For a finite word $w$ in binary digits we write $w\xi$ to denote the action of the operator obtained by replacing the digits of $w$ with $A,B$ and reversing the order (e.g.~if $w = 011$ then $w\xi = BBA\xi$).
	Recall $p_n$ is the first $n$ digits of $p$ for $n \in \N$. If $\xi$ is \emph{non-zero} then we say:
	\begin{itemize}
		\item $\xi$ is \textit{contained} in the ray $p$ if $\lim_{n \to \infty}\norm{p_n\xi} = \norm{\xi}$ (i.e.~$\norm{p_n\xi} = \norm{\xi}$ for all $n \in \N$);
		\item $\xi$ is \textit{annihilated} by all rays if $\lim_{n \to \infty} \norm{q_n\xi} =0$ for all rays $q$.
	\end{itemize}
\end{definition}

The following observation explains the diagrammatic natures of the definitions of above.

\begin{observation} \label{contained vector norm obs}
Consider a ray $p$ and recall that $\Ver_p$ denotes the vertices equal to all finite prefixes $p_n$ of $p$.
A non-zero vector $\xi \in \fH$ is contained in $p$ if and only if $\norm{\tau_{\nu}(\xi)} = \norm{\rho_{\nu}(\xi)} = \norm{\xi}$ for $\nu \in \Ver_p$ and $\tau_\nu(\xi) = \rho_{\nu}(\xi) = 0$ for $\nu \notin \Ver_{p}$. 
\end{observation}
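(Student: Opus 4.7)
The plan is to reduce the equivalence to two structural facts: (a) for $\xi\in\fH=\fH_I\subset\scrH$ and a vertex $\nu\in\Ver$, the value $\tau_\nu(\xi)$ coincides with the word action $\nu\xi$ of Definition~2.11 (viewed as an element of $\fH$), and (b) the monoidal functor $\Phi$ is isometric, so at depth $n$ one has $\sum_{|w|=n}\|w\xi\|^2=\|\xi\|^2$. Once these are in hand, the rest is bookkeeping.

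First I would establish (a) by growing $I$ to the complete binary tree $t_n$ of depth $n=|\nu|$ via a forest $f$ made of carets. By construction of $\Phi$ one places the operator $R=A\oplus B$ at each caret, so the decoration of the leaf $w$ of $t_n$ in $\Phi(f)\xi$ is exactly the word action $w\xi$ of Definition~2.11 (digits read right-to-left). Since $\nu$ is itself a leaf of $t_n$, the subtree $t_\nu$ is trivial and thus $\tau_\nu\xi=\nu\xi$ as an element of $\fH$. Because $\tau_\nu$ is a partial isometry, $\rho_\nu=\tau_\nu^*\tau_\nu$ is its initial projection, giving $\|\rho_\nu\xi\|=\|\tau_\nu\xi\|=\|\nu\xi\|$; in particular $\rho_\nu\xi=0$ iff $\tau_\nu\xi=0$. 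Statement (b) then follows from (a) together with $\Phi(t_n)$ being an isometry $\fH\to\fH^{\oplus 2^n}$.

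For the forward implication, suppose $\xi$ is contained in $p$, so $\|p_n\xi\|=\|\xi\|$ for every $n$. Applying (b) at depth $n$ yields
\[\|\xi\|^2=\|p_n\xi\|^2+\sum_{|w|=n,\,w\neq p_n}\|w\xi\|^2,\]
which forces every summand with $w\neq p_n$ to vanish. For arbitrary $\nu\notin\Ver_p$, setting $n=|\nu|$ gives $\nu\neq p_n$, hence $\tau_\nu\xi=\rho_\nu\xi=0$; for $\nu=p_n\in\Ver_p$, (a) directly yields $\|\tau_\nu\xi\|=\|\rho_\nu\xi\|=\|p_n\xi\|=\|\xi\|$. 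The reverse implication is immediate: specialising the hypothesis to $\nu=p_n$ and invoking (a) recovers $\|p_n\xi\|=\|\xi\|$ for all $n$, which is precisely the defining condition for $\xi$ to be contained in $p$.

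The main thing to watch for is the reversal convention in Definition~2.11 when identifying $\tau_\nu$ with a composition of $A$'s and $B$'s; beyond that, there is no serious obstacle. Note as an aside that the equivalence between the limit and pointwise formulations of ``contained'' already used in the definition is automatic, since $(\|p_n\xi\|)_n$ is non-increasing because $A,B$ are contractions by the Pythagorean equality.
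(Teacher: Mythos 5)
Your proof is correct, and it is exactly the argument the paper leaves implicit (the statement is given as an unproved Observation): identify $\tau_\nu\restriction_\fH$ with the word action $\nu\xi$ by growing the trivial tree to the complete tree $t_{|\nu|}$, use that $\rho_\nu=\tau_\nu^*\tau_\nu$ is the support projection of a partial isometry, and exploit the isometry of $\Phi(t_n)$ to get the depth-$n$ Pythagorean identity $\sum_{|w|=n}\norm{w\xi}^2=\norm{\xi}^2$, which forces the off-ray components to vanish. No gaps; the aside about the non-increasing sequence $(\norm{p_n\xi})_n$ correctly justifies the parenthetical equivalence in the paper's definition of ``contained''.
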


\begin{example}
Consider the P-module 
\[(
\begin{pmatrix}
	0 & 0 \\
	1 & 0
\end{pmatrix},
\begin{pmatrix}
	0 & 1 \\
	0 & 0
\end{pmatrix},
\C^2).\] 
Then $e_1= (1\ 0)^T  \in \fH$ is contained in the zig-zag ray $(01)^\infty=0101\dots$ as shown below.

\begin{center}
	\begin{tikzpicture}[baseline=0cm, scale = 0.9]
		\draw[thick] (0,0)--(-.5, -.5);
		\draw (0,0)--(.5, -.5);
		\draw (-.5, -.9)--(-1, -1.4);
		\draw[thick] (-.5, -.9)--(0, -1.4);
		\draw[thick] (0, -1.8)--(-.5, -2.3);
		\draw (0, -1.8)--(.5, -2.3);
		
		\node[label={[yshift=-5pt] \footnotesize $e_1$}] at (0, 0) {};
		\node[label={[yshift=-18pt] \footnotesize $e_2$}] at (-.5, -.5) {};
		\node[label={[yshift=-18pt] \footnotesize $0$}] at (.5, -.5) {};
		\node[label={[yshift=-18pt] \footnotesize $0$}] at (-1, -1.4) {};
		\node[label={[yshift=-18pt] \footnotesize $e_1$}] at (0, -1.4) {};
		\node[label={[yshift=-18pt] \footnotesize $e_2$}] at (-.5, -2.3) {};
		\node[label={[yshift=-18pt] \footnotesize $0$}] at (.5, -2.3) {};
	\end{tikzpicture}%
\end{center}

Similarly, the diagram shows that $e_2$ is contained in the ray $(10)^\infty$. 
\end{example}

\begin{lemma} \label{lem:vec-rays-ortho}
	Let $\xi, \eta \in \fH$ be two vectors contained in rays $p,q$, respectively. If $p \neq q$ then $\xi, \eta$ are orthogonal vectors.
	
	In particular, if $\xi \in \fH$ is contained in a ray, then $p$ is eventually periodic and the length of a period of $p$ is smaller or equal to $\dim(\fH)$.
\end{lemma}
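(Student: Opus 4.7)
For the orthogonality claim, I would iterate the Pythagorean relation $A^{*}A+B^{*}B=\id_{\fH}$. A direct induction yields, for every $u,v\in\fH$ and every $n\geq 1$,
\[\langle u,v\rangle \;=\;\sum_{|w|=n}\langle wu, wv\rangle.\]
Setting $v=u$ recovers $\sum_{|w|=n}\|w\xi\|^{2}=\|\xi\|^{2}$, which combined with $\|p_{n}\xi\|=\|\xi\|$ forces $w\xi=0$ for every length-$n$ word $w\ne p_{n}$, and similarly $w\eta=0$ for every $w\ne q_{n}$. Assuming $p\ne q$, fix the smallest $n$ with $p_{n}\ne q_{n}$; then in the expansion above every summand involves either a vanishing $w\xi$ (when $w\ne p_{n}$) or a vanishing $w\eta$ (when $w=p_{n}\ne q_{n}$), so $\langle\xi,\eta\rangle=0$.

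For the second assertion, I would introduce $\eta_{n}:=p_{n}\xi\in\fH$; each $\eta_{n}$ is non-zero since $\|\eta_{n}\|=\|\xi\|$. The key claim is that $\eta_{n}$ is itself contained in the shifted ray ${}_{n}p$. This is a bookkeeping check with the reversal convention: composing the operator attached to $p_{n}$ with the operator attached to $({}_{n}p)_{k}$ produces the operator attached to the concatenation $p_{n}\cdot({}_{n}p)_{k}=p_{n+k}$, so $({}_{n}p)_{k}\,\eta_{n}=p_{n+k}\xi$ has norm $\|\xi\|=\|\eta_{n}\|$. Applying part (i), $\eta_{n}\perp\eta_{m}$ whenever ${}_{n}p\ne{}_{m}p$. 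Since $\fH$ admits at most $\dim(\fH)$ pairwise orthogonal non-zero vectors, among the $\dim(\fH)+1$ vectors $\eta_{0},\eta_{1},\ldots,\eta_{\dim(\fH)}$ there must exist indices $0\leq n<m\leq\dim(\fH)$ for which ${}_{n}p={}_{m}p$. Equality of these tails reads precisely as $p=p_{n}\cdot w^{\infty}$ with $w:=p_{n+1}\cdots p_{m}$, a word of length $m-n\leq\dim(\fH)$.

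The only delicate point is consistency with the reversal convention when composing words, so that $({}_{n}p)_{k}\eta_{n}=p_{n+k}\xi$ really holds; once this identification is unambiguous, everything else reduces to iterating the Pythagorean identity, invoking the orthogonality already established, and performing a pigeonhole argument in a finite-dimensional Hilbert space.
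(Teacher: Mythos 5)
Your proof is correct and follows essentially the same route as the paper: the iterated Pythagorean identity shows the length-$n$ components of $\xi$ are supported only at $p_n$ (the paper phrases this via the isometry $\Phi(t_n)$ into $\scrH$, you do it directly in $\fH$), giving orthogonality, and then the observation that $p_n\xi$ is contained in the tail ${}_np$ plus a pigeonhole on pairwise orthogonal vectors yields eventual periodicity with period length at most $\dim(\fH)$. The only quibble is notational: in the paper's conventions the period should be written as the digits in positions $n+1$ through $m$ of $p$ rather than ``$p_{n+1}\cdots p_m$'', since $p_k$ denotes a prefix, not a single digit.
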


\begin{proof}
	 We will show the first statement by identifying $\xi, \eta$ with their images inside $\Pi(\fH)$. Since $p \neq q$, there exists $n \in \N$ such that $p_n \neq q_n$. 	 
	 We have:
	\[\langle \xi, \eta \rangle = \langle [e, \xi] , [e, \eta] \rangle = \langle [t_n, \Phi(t_n)\xi], [t_n, \Phi(t_n)\eta] \rangle\]
	where recall $\Phi$ is the functor $\cF \rightarrow \Hilb$ used to construct the P-representation.
	Now, $(t_n,\Phi(t_n)\xi)$ corresponds to a map $\Leaf(t_n)\to \fH$ supported at $p_n$. 
	Since $p_n\neq q_n$ we deduce that $(t_n,\Phi(t_n)\xi)$ and $(t_n,\Phi(t_n)\eta)$ have disjoint support. Therefore, $\langle\xi,\eta\rangle=0$.
	
	Consider now a periodic ray $p$ and $\xi\in\fH$ contained in $p$. If $p$ has a period of length $m$, then note that the ${}_np$ are all distinct for $0\leq n\leq m-1$. Since $p_n\xi$ is contained in ${}_np$ we deduce that the $p_n\xi$ are mutually orthogonal giving $m\leq \dim(\fH)$, and that no vector can be contained in an aperiodic ray.
\end{proof}

We deduce below a strong dichotomy using the compactness of the unit ball of $\fH$.

\begin{proposition} \label{prop:dichotomy convergence prop}
	Let $\fH$ be a P-module. Then either:
	\begin{itemize}
		\item every non-zero vector in $\fH$ is annihilated by all rays; or
		\item there exists a vector contained in some periodic ray $p$.
	\end{itemize}  
\end{proposition}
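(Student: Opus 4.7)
I would prove the contrapositive: suppose there exists a non-zero $\xi_0\in\fH$ and a ray $p^{(0)}$ with $\norm{p^{(0)}_n\xi_0}\not\to 0$, and construct a vector contained in a periodic ray. Since $A^*A+B^*B=\id_\fH$ forces $\norm{X\eta}\le\norm{\eta}$ for $X\in\{A,B\}$, the sequence $\norm{p^{(0)}_n\xi_0}$ is non-increasing, hence its limit is a strictly positive number.

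\textbf{Step 1 (uniform exponential decay is the only alternative).} For each $n\ge 1$ set
\[V_n:=\bigl\{\xi\in\fH : \norm{\xi}=1,\ \exists\, w\in\{0,1\}^n \text{ with } \norm{w\xi}=1\bigr\}.\]
The map $\xi\mapsto \max_{|w|=n}\norm{w\xi}$ is continuous on the unit sphere $S$, so $V_n$ is a closed subset of the compact space $S$. I would first show $V_n\neq\emptyset$ for every $n$. If $V_N=\emptyset$ for some $N$, then by compactness of $S$ and finiteness of length-$N$ words there is $c<1$ with $\norm{w\xi}\le c\norm{\xi}$ for all $\xi\in\fH$ and all $|w|=N$. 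Splitting any longer word into blocks of length $N$ yields $\norm{u\xi}\le c^{\lfloor |u|/N\rfloor}\norm{\xi}$, so every ray annihilates every vector, contradicting the standing hypothesis.

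\textbf{Step 2 (produce an invariant ray by compactness and König's lemma).} Norm-monotonicity along the tree ($\norm{X\eta}\le \norm{\eta}$) gives $V_{n+1}\subseteq V_n$: any prefix of a length-$(n+1)$ witness is a length-$n$ witness. Hence $(V_n)$ is a decreasing sequence of non-empty closed subsets of the compact space $S$, so $\bigcap_n V_n\neq\emptyset$. Pick $\eta$ in this intersection. The set $T:=\{w\in\{0,1\}^* : \norm{w\eta}=\norm{\eta}\}$ is a rooted subtree of the binary tree that is closed under taking prefixes (by monotonicity) and is infinite (it meets every level). As $T$ has binary branching, König's lemma produces an infinite path, i.e., a ray $q$ with $\norm{q_n\eta}=\norm{\eta}$ for all $n$. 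Hence $\eta$ is contained in $q$.

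\textbf{Step 3 (extract periodicity).} By Lemma~\ref{lem:vec-rays-ortho}, such a ray $q$ is eventually periodic; write $q=v\cdot w^\infty$ with $v$ of minimal length. Then $\eta':=v\eta=q_{|v|}\eta$ is a unit vector, and for every $m\ge 0$,
\[\norm{(w^\infty)_m\eta'}=\norm{({}_{|v|}q)_m\,q_{|v|}\eta}=\norm{q_{|v|+m}\eta}=1,\]
so $\eta'$ is contained in the periodic ray $w^\infty$, as required.

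\textbf{Main obstacle.} The delicate point is Step~2: merely knowing that for each $n$ \emph{some} length-$n$ word witnesses norm~$1$ does not a priori provide a coherent ray. This is where both compactness of $S$ (finite-dimensionality of $\fH$) and finiteness of the branching of the binary tree (König's lemma) enter crucially; Step~1 then ensures the witnesses exist at every level.
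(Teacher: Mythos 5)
Your proof is correct, but it follows a genuinely different route from the paper. The paper starts from a specific vector $\xi$ not annihilated along a specific ray $p$, extracts an accumulation point $\eta$ of the sequence $(p_n\xi)_n$ by compactness of the closed unit ball, and then uses the Pythagorean equality in a delicate digit-by-digit selection argument to build a ray $\hat p$ (assembled from tails of the digits of $p$) along which $\eta$ has constant norm; periodicity is then absorbed into Lemma \ref{lem:vec-rays-ortho}. You instead run a nested-compactness argument on the closed sets $V_n$ of unit vectors admitting a length-$n$ norm-preserving word (nonempty by your uniform-contraction dichotomy, nested by contractivity of $A,B$), and then extract a coherent ray via K\"onig's lemma applied to the prefix-closed tree of norm-preserving words; your Step 3 also makes explicit the shift from an eventually periodic ray to a genuinely periodic one, which the paper leaves implicit. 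What each approach buys: yours is arguably cleaner, using only contractivity (never the full Pythagorean identity beyond that) and avoiding the paper's somewhat technical $I_\varep$ digit-selection; the paper's construction is more constructive in that the produced ray $\hat p$ is built from tails of the original ray $p$, a refinement the statement does not require but which the paper later invokes (in the proof of Corollary \ref{cor:intro-ind-weak-mix}, non-decay of $A^n\xi$ is upgraded to a vector contained in $0^\infty$ itself by citing the \emph{proof} of the proposition). Your argument, as written, loses that link between the original ray and the constructed one, though it adapts easily (e.g.\ replace $V_n$ by the unit vectors $\xi$ with $\norm{A^n\xi}=1$) if that refinement is needed.
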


\begin{proof}
	Consider a P-module $(A,B,\fH)$.
	Assume that there exists a unit vector $\xi$ that is not annihilated by all rays.
	Thus, there exists a ray $p$ satisfying $\lim_{n\to\infty}\|p_n\xi\|\neq 0.$
	Since both $A,B$ have norm smaller than $1$, the sequence $(\|p_n\xi\|)_{n\geq 1}$ is decreasing. 
	It must then converge to a certain $\ell>0$.
The sequence $(p_n\xi)_{n\geq 1}$ is contained in the closed unit ball of $\fH$ which is compact since $\dim(\fH)<\infty$. 
	Therefore, this sequence admits at least one accumulation point $\eta$.
	We necessarily have that $\|\eta\|=\ell$ and thus $\eta$ is non-zero.
	We now construct a new ray $\hat p$ satisfying $\|\hat p_n\eta\|=\ell$ for all $n\geq 1$.
	
	For any $\varep>0$ define $I_\varep$ the set of indices $n\geq 1$ satisfying that $\| \eta - p_n\xi\|<\varep.$
	By definition $I_\varep$ is infinite for any choice of $\varep>0$ (and obviously nested in $\varep$).
	If $n<m$ we write $p_n^m$ for the word in $a,b$ satisfying $p_m = p_n^m\cdot p_n$.
	Observe that if $n,m\in I_\varep$ and $n<m$, then 
	\begin{align*}
		\|\eta-p_n^m\eta\| & \leq \| \eta - p_m\xi\| + \|p_m\xi - p_n^m\eta\| \\
		& \leq \| \eta - p_m\xi\| + \|p_n^m(p_n\xi - \eta)\| \\
		& \leq \| \eta - p_m\xi\| + \|p_n^m\| \cdot \|(p_n\xi - \eta)\| 
	 \leq 2\varep.
	\end{align*}
	This implies that 
	$$\|p_n^m\eta\| \geq \|\eta\| - \|p_n^m\eta -\eta\| \geq \|\eta\| - 2 \varep.$$
	Write $x_k$ the $k$th digits of $p$.
	The previous inequality implies that 
	$$\|x_{n+1} \eta\| \geq \|\eta\| - 2\varep \text{ for all } n\in I_\varep.$$
	From now on we choose $\varep>0$ small enough with respect to $\ell=\|\eta\|$ so that $\|\eta\| - 2\varep> \|\eta\|/\sqrt 2.$
	This implies that $I_\varep\ni n\mapsto x_{n+1}$ is constant.
	Indeed, if both digits $0,1$ would appear as $x_{n+1}$ and $x_{m+1}$ for $n,m\in I_\varep$ we would contradict the Pythagorean equation of $(A,B)$ since we would have:
	$$\|\eta\|^2 = \|A\eta\|^2+\|B\eta\|^2 = \|x_{n+1}\eta\|^2+\|x_{m+1}\eta\|^2> \|\eta\|^2,$$
	a contradiction.
	Let $y_1\in\{0,1\}$ be the digits equal to $x_{n+1}$ for $n\in I_\varep.$
	Observe that $$\|y_1\eta\|\geq \|\eta\|-2\varep' \text{ for all } 0<\varep'\leq\varep.$$
	We deduce that $\|y_1\eta\|=\|\eta\|$.
	A similar reasoning can be applied to the second digits of $p_n^m$ (for $m\geq n+2$ and $n,m\in I_\varep$) and the vector $y_1\eta$ to prove that $n\ni I_\varep\mapsto x_{n+2}$ is constant equal to a certain $y_2$ and moreover $\|y_2y_1\eta\|=\|\eta\|$.
	By induction we obtain a sequence $(y_n)_{n\geq 1}$ and thus a new ray $\hat p$ from it satisfying that 
	\[\|\hat p_k\eta\|=\|y_k\dots y_1 \eta\| = \|\eta\| \text{ for all } k\geq 1. \qedhere\]
\end{proof}

\subsection{Atomic and diffuse P-modules}
The next lemma will permit to only consider periodic rays rather than all {\it eventually} periodic ones.
\begin{lemma} \label{lem:contain-ray-res}
If $\xi\in\fH$ is contained in an eventually periodic ray $p$ that is not periodic, then $\xi \in \fH_{\res}.$
\end{lemma}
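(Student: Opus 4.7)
The plan is to prove that $\xi$ is orthogonal to every irreducible sub-module of $\fH$; since $\fH_{\comp}$ decomposes as a finite direct sum of such sub-modules by Lemma \ref{lem:p-mod-irr-decomp} and $\fH_{\res} = \fH_{\comp}^{\perp}$, this gives $\xi \perp \fH_{\comp}$, hence $\xi \in \fH_{\res}$.

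Fix an irreducible sub-module $\fK \subset \fH$. Since $\fK$ is finite-dimensional I apply the dichotomy of Proposition \ref{prop:dichotomy convergence prop} to it: either (A) every non-zero vector in $\fK$ is annihilated by all rays, or (B) some non-zero $\mu \in \fK$ is contained in a periodic ray $q$.

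To handle (A), I would use the iterated Pythagorean identity $\sum_{|w|=n} w^{*}w = \id_{\fH}$, which follows by a one-line induction from $A^{*}A + B^{*}B = \id_{\fH}$. Since $\xi$ is contained in $p$, all words $w$ of length $n$ other than $p_n$ satisfy $w\xi = 0$, so the sum collapses to $\xi = p_n^{*} p_n \xi$. For any $\eta \in \fK$ this gives
\[|\langle \xi, \eta \rangle| = |\langle p_n\xi, p_n\eta \rangle| \leq \|\xi\| \cdot \|p_n\eta\|,\]
and the right-hand side tends to $0$ as $n \to \infty$ by hypothesis (A). Hence $\xi \perp \fK$.

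For (B), the sub-module of $\fK$ generated by $\mu$ equals $\mathrm{span}\{\mu_n := q_n\mu : n \geq 0\}$, because $w\mu = 0$ whenever $w$ is not a prefix of $q$; by irreducibility this span must be all of $\fK$. Each $\mu_n$ is contained in the ray ${}_nq$, which is a cyclic shift of $q$ and thus again periodic. This is where the hypothesis that $p$ is not periodic enters crucially: it forces $p \neq {}_nq$ for every $n$, and Lemma \ref{lem:vec-rays-ortho} then gives $\langle \xi, \mu_n \rangle = 0$ for all $n$, whence $\xi \perp \fK$. I expect case (B) to be the delicate one since it is the only place that uses the non-periodicity of $p$; case (A) does not exploit any particular feature of $p$ beyond its being a ray.
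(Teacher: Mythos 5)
Your proof is correct, but it takes a genuinely different route from the paper's. The paper works inside the sub-module generated by $\xi$ itself: writing $p=v\cdot w^\infty$ with $v$ of minimal (non-zero) length $n$, that sub-module is $\Span\{p_k\xi : k\geq 0\}$, the tail $\Span\{p_k\xi : k\geq n\}$ is a sub-module, and Lemma \ref{lem:vec-rays-ortho} identifies its orthogonal complement as $\Span\{p_j\xi : 0\leq j<n\}$, which contains $\xi$ and is residual. You instead prove that $\xi$ is orthogonal to \emph{every} irreducible sub-module $\fK\subset\fH$, splitting along the dichotomy of Proposition \ref{prop:dichotomy convergence prop}: in the diffuse case via the iterated Pythagorean identity giving $\xi=p_n^*p_n\xi$ together with $\norm{p_n\eta}\to 0$, and in the atomic case via Lemma \ref{lem:vec-rays-ortho} applied to $\fK=\Span\{q_n\mu : n\geq 0\}$, where non-periodicity of $p$ forces $p\neq {}_nq$ for all $n$; you then conclude from $\fH_{\res}=\fH_{\comp}^\perp$ and Lemma \ref{lem:p-mod-irr-decomp}. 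Your argument is a bit longer and leans on the dichotomy, but it buys something: it avoids the paper's reduction ``we may assume $\xi$ generates $\fH$'' (whose passage from a residual subspace of the generated sub-module back to $\fH_{\res}$ is left implicit), and since it shows orthogonality to every irreducible sub-module it is insensitive to any non-uniqueness in the decomposition of Lemma \ref{lem:p-mod-irr-decomp}. The paper's argument, in turn, is shorter, uses only Lemma \ref{lem:vec-rays-ortho}, and locates $\xi$ explicitly inside the span of its own orbit $\{p_k\xi\}$.
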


\begin{proof}
Consider $p$ and $\xi$ as above.
We have that $p=v \cdot w^\infty$ for some words $w,v$. 
We choose $v$ with the smallest possible length $n$ which is non-zero by assumption.
By restricting to sub-modules we can assume that $\xi$ generates the P-module $\fH$. 
This means that $\fH$ is the span of all the $u\xi$ with $u$ any finite word in binary digits.
Since $\xi$ is contained in $p$ we can restrict to words $u=p_k$ for all $k\geq 0$.
Define $\fK$ as the span of the $p_k\xi$ with $k\geq n$.
Lemma \ref{lem:vec-rays-ortho} shows that $\fK^\perp$ is the span of $p_j\xi$ with $0\leq j< n$.
Therefore, $\fK^\perp$ is residual.
\end{proof}

\begin{definition}
For a P-module $\fH$ define the following subspaces:
\begin{align*}
\fH_{\atom} &:= \textrm{span}\{\xi \in \fH_{\comp} : \xi \textrm{ is contained in some periodic ray } p\} \\
\fH_{\diff} &:= \{\xi \in \fH_{\comp} : \xi \textrm{ is annihilated by all rays}\}.
\end{align*}
Then we say:
\begin{itemize}
\item $\fH_{\atom}$ (resp.~$\fH_{\diff}$) is the atomic (resp.~diffuse) part of $\fH$;
\item $\fH$ is \textit{atomic} (resp.~\textit{diffuse}) if $\fH_{\atom} = \fH_{\comp}$ (resp.~$\fH_{\diff} = \fH_{\comp}$).
\end{itemize}
\end{definition}

\begin{theorem} \label{thm:atomic-diff-decomp}
The subspaces $\fH_{atom}$ and $\fH_{\diff}$ are sub-modules of $\fH$. Furthermore,
\[\fH_{\comp} = \fH_{\atom} \oplus \fH_{\diff}\]
giving the decomposition
\[\fH = \fH_{\atom} \oplus \fH_{\diff} \oplus \fH_{res} \textrm{ and } \sigma = \sigma_{\atom} \oplus \sigma_{\diff}\]
where $\sigma_{\atom} := \sigma_{\fH_{\atom}}$ and $\sigma_{\diff} := \sigma_{\fH_{\diff}}$ are the atomic and diffuse parts of $\sigma$, respectively.
\end{theorem}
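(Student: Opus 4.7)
The plan is to establish, in order: (i) that $\fH_{\atom}$ and $\fH_{\diff}$ are sub-modules of $\fH_{\comp}$; (ii) that $\fH_{\atom} \cap \fH_{\diff} = \{0\}$; (iii) that $\fH_{\comp} \subset \fH_{\atom} + \fH_{\diff}$. The representation-level assertion $\sigma = \sigma_{\atom} \oplus \sigma_{\diff}$ then follows from Proposition \ref{prop:complete-mod-rep} applied to the orthogonal sub-module decomposition $\fH_{\comp} = \fH_{\atom} \oplus \fH_{\diff}$.

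For (i), the sub-module property of $\fH_{\atom}$ reduces to a single observation: if $\xi$ is contained in a periodic ray $p = w^\infty$ with $w = w_1 \cdots w_d$ prime, then $\norm{p_1\xi} = \norm{\xi}$ combined with the Pythagorean relation forces one of $A\xi$, $B\xi$ to vanish, while the other (determined by $w_1$) has norm $\norm{\xi}$ and is contained in the shifted periodic ray $(w_2 \cdots w_d w_1)^\infty$. Linearity extends closure to the entire span $\fH_{\atom}$, even when its elements come from different rays. For $\fH_{\diff}$, if $\xi$ is annihilated by every ray and $q$ is any ray, then $q_n A \xi = (0 \cdot q)_{n+1} \xi$ (prepending a $0$ to $q$) tends to zero in norm, and analogously for $B$; closure under addition is the triangle inequality. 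Finally, $\fH_{\atom}, \fH_{\diff} \subset \fH_{\comp}$ is preserved because $\fH_{\comp}$ is itself a sub-module by Lemma \ref{lem:p-mod-irr-decomp}.

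For (ii), the key computation uses that any $\eta$ contained in $p$ satisfies $p_n^* p_n \eta = \eta$: indeed $\norm{p_n} \leq 1$ together with $\norm{p_n \eta} = \norm{\eta}$ forces equality throughout
\[\norm{\eta}^2 = \langle p_n^* p_n \eta, \eta \rangle \leq \norm{p_n^* p_n \eta} \cdot \norm{\eta} \leq \norm{\eta}^2.\]
Therefore, for any $\xi$ annihilated by all rays,
\[|\langle \xi, \eta \rangle| = |\langle p_n \xi, p_n \eta \rangle| \leq \norm{p_n \xi} \cdot \norm{\eta} \longrightarrow 0,\]
so $\xi \perp \eta$, and by bilinearity $\fH_{\diff} \perp \fH_{\atom}$, giving in particular $\fH_{\atom} \cap \fH_{\diff} = \{0\}$. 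For (iii), decompose $\fH_{\comp} = \bigoplus_{i=1}^n \fH_i$ into irreducible sub-modules via Lemma \ref{lem:p-mod-irr-decomp} and apply Proposition \ref{prop:dichotomy convergence prop} to each $\fH_i$: either every non-zero vector of $\fH_i$ is annihilated by all rays (so $\fH_i \subset \fH_{\diff}$), or $\fH_i$ contains a non-zero vector $\eta$ in a periodic ray. In the latter case, the $A,B$-invariant subspace generated by $\eta$ is a non-zero sub-module of $\fH_i$; by irreducibility it equals $\fH_i$, and by step (i) it lies inside $\fH_{\atom}$, so $\fH_i \subset \fH_{\atom}$. Combining, $\fH_{\comp} \subset \fH_{\atom} + \fH_{\diff}$, which together with (ii) yields the orthogonal direct-sum decomposition.

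The delicate point is passing from the \emph{pointwise} dichotomy of Proposition \ref{prop:dichotomy convergence prop} to a \emph{global} decomposition of $\fH_{\comp}$; the bridge is the irreducible decomposition of Lemma \ref{lem:p-mod-irr-decomp}, which lets us sort each irreducible piece cleanly into either $\fH_{\atom}$ or $\fH_{\diff}$. The other subtle step is verifying that $\fH_{\atom}$ is genuinely $A,B$-stable when its vectors are linear combinations of ray-contained vectors coming from distinct periodic rays, but this follows immediately from the linearity of $A,B$ once the single-vector case is handled.
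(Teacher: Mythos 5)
Your proof is correct and follows essentially the same route as the paper: closure of $\fH_{\atom}$ and $\fH_{\diff}$ under $A,B$, then the irreducible decomposition of Lemma \ref{lem:p-mod-irr-decomp} combined with the dichotomy of Proposition \ref{prop:dichotomy convergence prop} to sort each irreducible summand into the atomic or diffuse part, and finally Proposition \ref{prop:complete-mod-rep} for the statement about $\sigma$. Your explicit Cauchy--Schwarz argument that $p_n^*p_n\eta=\eta$ for a ray-contained vector $\eta$, which yields $\fH_{\atom}\perp\fH_{\diff}$ directly, is a nice point the paper leaves implicit, but it does not change the overall strategy.
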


\begin{proof}
Let $\xi$ be a non-zero vector in $\fH_{\comp}$. If $\xi$ is contained in a periodic ray $p$, then either $A\xi$ is contained in the periodic ray $_1p$ and $B\xi = 0$ or vice versa. In contrast, if $\xi$ is annihilated by all rays, it is clear that $A\xi$ and $B\xi$ are also annihilated by all rays. Thus, this shows that $\fH_{\atom}$ and $\fH_{\diff}$ are sub-modules. 
	
For the second statement, since $\fH_{\comp}$ decomposes as a direct sum of irreducible sub-modules, it is suffice to show that every irreducible P-module is either atomic or diffuse. Suppose $\fK$ is an irreducible P-module which is not diffuse. Then by Proposition \ref{prop:dichotomy convergence prop} there exists a vector $\eta \in \fK$ which is contained in some periodic ray $p$. Consider the sub-module generated by $\eta$ which must be equal to $\fK$ by assumption of irreducibility of $\fK$. Observe that for a binary word $w$, if $w = p_n$ for some $n$ then $w\eta$ is contained in the ray $_np$, otherwise $w\xi$ is zero. Thus, we deduce $\fK$ is equal to the linear span of the vectors $\{p_k\xi\}_{k \in \N}$ where $p_k\xi$ is contained in the periodic ray $_kp$. Therefore, we can conclude $\fK$ is an atomic P-module. The rest of the theorem follows from Propositions \ref{prop:complete-mod-rep}. 
\end{proof}

\begin{remark}
	Before continuing, it is important to caution the reader of the subtleties present in the above definitions and explain the choice of definitions. 
	\begin{enumerate}[i]
		\item By Lemma \ref{lem:contain-ray-res}, the requirement for the ray to be periodic in the definition of $\fH_{\atom}$ is superfluous. 
		\item If $\fH$ is an atomic (resp.~diffuse) P-module, then this does not necessarily imply that $\fH = \fH_{\atom}$ (resp.~$\fH = \fH_{\diff}$). For example, consider the P-module $(A,B,\C^2)$ with $Ae_1 = Be_2 = e_1$ and $Ae_2 = Be_1 = 0$ (here $e_1,e_2$ denote the standard basis vectors of $\C^2$). Then $\fH_{\comp} = \C e_1$, $\fH_{\res} = \C e_2$, and the vector $e_1$ is contained in the ray $0^\infty$. Thus, $\fH_{\atom} = \C e_1 = \fH_{\comp}$ and $\fH$ is atomic, but the atomic part of $\fH$ does not coincide with $\fH$. An analogous example can be constructed for diffuse P-modules. However, at the level of P-representations, we do define $\sigma$ to be atomic (resp. diffuse) if and only if $\sigma$ is equal to its atomic (resp. diffuse) part. This is because the residual subspace $\fH_{\res}$ is ``forgotten" by the P-representation.
		\item The atomic and diffuse parts of $\fH$ have been defined to be sub-modules of $\fH_{\comp}$. This is to ensure that $\fH_{\atom}$ and $\fH_{\diff}$ are full P-modules and can be decomposed into a direct sum of irreducible atomic and diffuse P-modules, respectively. It is possible to alternatively define the atomic and diffuse parts of $\fH$ as sub-modules $\ti\fH_{\atom}$ and $\ti\fH_{\diff}$, respectively, of $\fH$ by using the same definitions for $\fH_{\atom}$ and $\fH_{\diff}$, respectively, but instead taking $\xi \in \fH$ rather than $\xi \in \fH_{\comp}$. This will still yield a canonical decomposition $\fH = \ti\fH_{\atom} \oplus \ti\fH_{\diff} \oplus \fZ$, where $\fZ$ is some residual subspace, and $\sigma = \ti\sigma_{\atom} \oplus \ti\sigma_{\diff}$. 
		We have that $\ti\sigma_{\atom}\cong \sigma_{\atom}, \ti\sigma_{\diff}\cong \sigma_{\diff},$ and by Lemma \ref{lem:contain-ray-res}, $\fH_{\atom} = \ti\fH_{\atom}$. Additionally, $\fZ\subset \fH_{\res}$ and $\ti\fH_{\diff}\supset \fH_{\diff}$ but these inclusions may be proper.
		\item The notion of diffuse P-modules were first introduced in \cite{Brothier-Wijesena22} where it was also defined for infinite-dimensional P-modules using the same definition. 
		\item The notions of diffuse and atomic are automatically well-defined for \emph{P-modules} but for \emph{P-representations} this requires a non-trivial argument.
		One approach consists in applying Theorem \ref{theo:atom-diff-decomp} and Corollary \ref{cor:intro-ind-weak-mix}.
		Indeed, if a representation is diffuse (resp. atomic) then it is $\NInd$ (not $\NInd$) which is a property preserved by the class of representations.
	\end{enumerate}	
\end{remark}

\begin{center}\textbf{The remainder of the paper will solely study full atomic P-modules and atomic P-representations.}\end{center}
We aim to provide a complete classification up to unitary equivalence for such objects.

\subsection{Atomic sub-representations associated to rays}
We now provide a decomposition of the atomic part of a sub-module using periodic rays.

\begin{definition}
	For a P-module $\fH$ and equivalence class $[p]$ of a periodic ray $p$ define the sub-module
	\[\fH_{\atom}^{[p]} := \textrm{span}\{\xi \in \fH_{\atom} : \xi \textrm{ is contained in } _{k}p \textrm{ for some } k\} \subset \fH\]
	and define the sub-representation $\sigma_{\atom}^{[p]} := \Pi(\fH_{\atom}^{[p]}) \subset \sigma := \Pi(\fH)$. 
\end{definition}

Note that the space $\fH_{\atom}^{[p]}$ is indeed a sub-module since if $\xi$ is contained in the ray $_kp$ then $A\xi$ is either $0$ or contained in $_{k+1}p$, and similarly for $B\xi$. 
Moreover, note that if $p,q$ are periodic and $p\sim q$, then $\fH_{\atom}^{[p]}=\fH_{\atom}^{[q]}.$ Hence, $\fH_{\atom}^{[p]}$ only depends on the class $[p].$

\begin{proposition} \label{prop:atom-decomp-rays}
	Given a P-module $\fH$ with associated P-representation $\sigma$ we have the following direct sum decomposition of the atomic part:
	\[\fHatom = \oplus_{[p]} \fHray{p} \textrm{ and } \sigma_{\atom} = \oplus_{[p]}\sigma_{\atom}^{[p]}\]
	where the direct sum runs over all equivalence classes of periodic rays whose period has length smaller or equal to $\dim(\fHatom)$.
\end{proposition}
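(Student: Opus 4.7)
The plan is to first prove the P-module decomposition $\fHatom = \oplus_{[p]} \fHray{p}$ and then transfer it to the representation level by functoriality. The entire argument is driven by Lemma \ref{lem:vec-rays-ortho}. For the spanning, note that by definition $\fHatom$ is the linear span of vectors $\xi \in \fH_{\comp}$ contained in some periodic ray; each such $\xi$ lies in $\fHray{p}$, and conversely $\fHray{p} \subset \fHatom$ by construction. Hence $\fHatom = \sum_{[p]} \fHray{p}$ as subspaces, ranging a priori over all classes of periodic rays.

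To upgrade this to a direct sum decomposition, I would invoke the orthogonality part of Lemma \ref{lem:vec-rays-ortho}: if $[p] \neq [q]$ then any generator $\xi \in \fHray{p}$ (contained in some ${}_kp$) and any generator $\eta \in \fHray{q}$ (contained in some ${}_jq$) are contained in genuinely distinct rays, hence orthogonal. Passing to spans yields $\fHray{p} \perp \fHray{q}$, so the sum is orthogonal and in particular direct. For the index bound, I would apply the second part of Lemma \ref{lem:vec-rays-ortho} inside the finite-dimensional P-module $\fHatom$ itself: any non-zero vector of $\fHatom$ contained in a periodic ray forces the period length to be at most $\dim(\fHatom)$. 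This makes the sum finite and establishes the claimed bound.

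For the passage to representations, since each $\fHray{p}$ is a sub-module, functoriality of $\Pi$ gives a well-defined sub-representation $\sigma_{\atom}^{[p]} = \Pi(\fHray{p})$ of $\sigma_{\atom}$. Orthogonality of the P-modules $\fHray{p}$ and $\fHray{q}$ transfers to orthogonality of $\sub{\fHray{p}}$ and $\sub{\fHray{q}}$ inside $\scrH$ by working on a common tree refinement and computing inner products leaf-by-leaf. Combined with $\sub{\fHatom} = \cspan\{\cup_{[p]} \sub{\fHray{p}}\}$, which follows from the P-module identity and the definition of $\sub{\cdot}$, this yields $\sigma_{\atom} = \oplus_{[p]} \sigma_{\atom}^{[p]}$; alternatively one may invoke Proposition \ref{prop:complete-mod-rep}(iii) on the finite list of non-zero sub-modules obtained in the previous step. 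The only delicate points are refining the period bound from $\dim(\fH)$ to $\dim(\fHatom)$ (by applying the lemma within the sub-module rather than the ambient space) and turning a direct sum of sub-modules into a direct sum of sub-representations; both are essentially bookkeeping once Lemma \ref{lem:vec-rays-ortho} and Proposition \ref{prop:complete-mod-rep} are in hand.
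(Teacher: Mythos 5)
Your proposal is correct and follows essentially the same route as the paper: Lemma \ref{lem:vec-rays-ortho} gives both the pairwise orthogonality of the $\fHray{p}$ for distinct classes $[p]$ and, applied to the sub-module $\fHatom$ itself, the bound $|p|\leq\dim(\fHatom)$, while the spanning statement is immediate from the definitions. Your extra care in transferring the orthogonal decomposition to the spaces $\sub{\fHray{p}}$ (via a common tree refinement) only fills in details the paper treats as immediate, so there is no substantive difference.
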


\begin{proof}
	Lemma \ref{lem:vec-rays-ortho} shows that if $[p] \neq [q]$ then $\fHray{p}, \fHray{q}$ are orthogonal sub-modules. By definition, every vector in $\fHatom$ belongs in $\fHray{p}$ for some periodic ray $p$. Therefore, we immediately obtain the decomposition:
	\[\fHatom = \oplus_{[p]} \fHray{p} \textrm{ and } \sigma_{\atom} = \oplus_{[p]}\sigma_{\atom}^{[p]}\]
	where the direct sum runs over all equivalence classes of periodic rays.
Lemma \ref{lem:vec-rays-ortho} implies that if $|p|>\dim(\fH_{\atom})$, then $\fH^{[p]}_{\atom}=\{0\}$.	
\end{proof}

\subsection{Irreducible classes of atomic P-modules} \label{subsec:atomic-pmod-model}

We construct explicit representatives of irreducible classes of atomic P-modules.

\begin{notation} \label{not:atom-p-mod-ray}
	Fix $W_d$ to be a set of representatives of all prime binary words $w$ of length $d$ up to cyclic permutation (by prime we mean there does not exist a word $v$ and $n > 1$ such that $w = v^n$). Set $W := \cup_{d \geq 1} W_d$.
	Consider a prime word $w \in W_d$. 
	Write $\{e_1,\dots,e_d\}$ for the standard basis of $\C^d$.
	Define matrices $A_w,B_w \in B(\C^d)$ such that $(A_w)_{n+1,n} = 1$ if the $n$th digit of $w$ is $0$, otherwise $(B_w)_{n+1,n} = 1$ for $n=1,\dots, d$ (by $(A_w)_{d+1,d}$ we mean $(A_w)_{1,d}$). Set all other entries of $A_w$ and $B_w$ to be $0$. Thus, $A_w,B_w$ represent partial shift maps such that $A_w+B_w$ maps $e_j$ to $e_{j+1}$ with $j$ modulo $d$.
	For $\varphi \in S_1$ define $D_\varphi$ to be $d$ by $d$ diagonal matrix whose diagonal entries are all ones except for the last entry which is $\varphi$. We then set
	\[m_{w, \varphi} := (A_wD_\varphi, B_wD_\varphi, \C^d).\] 
\end{notation}

\begin{proposition} \label{prop:atom-p-mod-classif}
Consider two atomic P-modules $m_{w, \varphi}$ and $m_{\ti w, \ti\varphi}$ for any two prime words $w,\ti w \in W$ and unital scalars $\varphi,\ti\varphi\in S_1$. Then we have the following:
\begin{enumerate}[i]
\item $m_{w, \varphi}$ is an irreducible P-module and is equal to $(\C^d)_{\atom}^{[p]}$ where $p = w^\infty$ and $d = \vert w \vert$;
\item If $\fH$ is atomic and irreducible then $\fH\simeq m_{v,\lambda}$ for some $v\in W_d$, $\lambda\in S_1$ with $d = \dim(\fH)$;
\item $m_{w, \varphi}$ and $m_{\ti w, \ti\varphi}$ are equivalent if and only if $(w, \varphi) = (\ti w, \ti\varphi)$. 
\end{enumerate}	
\end{proposition}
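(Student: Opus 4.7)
The plan is to establish the three items in order, each relying on the key observation that $(A_w + B_w) D_\varphi$ acts as the cyclic shift $e_n \mapsto e_{n+1}$ (with wrap-around $e_d \mapsto \varphi e_1$), while individually $A_w D_\varphi$ and $B_w D_\varphi$ kill the basis vectors at positions where $w$ has the opposite digit.

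For item i, I would first verify that $e_{k+1}$ is contained in the periodic ray ${}_k p$ for $p = w^\infty$: writing $W_n := A_w D_\varphi$ when $w_n = 0$ and $W_n := B_w D_\varphi$ otherwise, iterating the transition maps cycles $e_1 \mapsto e_2 \mapsto \cdots \mapsto e_d \mapsto \varphi e_1$, so $\norm{p_n e_1} = 1$ for all $n$. Hence $\C^d \subset (\C^d)_{\atom}^{[p]}$, and equality holds. For irreducibility, pick any non-zero sub-module $\fK$, take a non-zero $\xi = \sum c_n e_n \in \fK$ and an index $n_0$ with $c_{n_0} \neq 0$, and apply successively the operators $W_{n_0}, W_{n_0+1}, \ldots, W_{n_0+d-1}$ (indices mod $d$). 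A component $c_m e_m$ survives all $d$ steps exactly when the cyclic shift of $w$ starting at position $m$ coincides with the one starting at $n_0$; primeness of $w$ forces $m = n_0$, so the end result is a non-zero scalar multiple of $e_{n_0}$, and $\fK = \C^d$.

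For item ii, irreducibility forces $\fH = \fH_{\comp}$, and atomicity then gives $\fH = \fHatom$; Proposition \ref{prop:atom-decomp-rays} reduces to $\fH = \fHray{p}$ for a single class $[p]$, say $p = w^\infty$ with $w$ prime of length $e$. Let $V_{{}_k p} \subset \fH$ denote the subspace spanned by vectors contained in ${}_k p$; these are pairwise orthogonal by Lemma \ref{lem:vec-rays-ortho}, each of $A$ and $B$ either kills $V_{{}_k p}$ or maps it isometrically into $V_{{}_{k+1} p}$, and the direct sum $\bigoplus_k V_{{}_k p}$ is an $A, B$-invariant subspace of $\fH$, hence equal to $\fH$ by irreducibility. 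The cyclic composition $T := p_e \restriction V_p$ is an isometry of the finite-dimensional space $V_p$, so it is unitary and all $\dim V_{{}_k p}$ coincide with a common value $m$. Picking a unit eigenvector $\xi \in V_p$ of $T$ with eigenvalue $\varphi \in S_1$, the span $\{p_k \xi : 0 \leq k < e\}$ is $A, B$-invariant, so irreducibility yields $m = 1$ and $d = e$; the orthonormal assignment $p_{k-1} \xi \mapsto e_k$ then furnishes a unitary intertwiner $\fH \cong m_{w, \varphi}$, as a direct comparison with the definition of $A_w D_\varphi$ and $B_w D_\varphi$ confirms.

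For item iii, a unitary intertwiner $U : m_{w, \varphi} \to m_{\ti w, \ti \varphi}$ forces equal dimensions $d = \ti d$. Since $\norm{p_n U e_1} = \norm{U p_n e_1} = 1$ for all $n$ with $p = w^\infty$, the vector $U e_1$ is contained in $w^\infty$ inside $m_{\ti w, \ti \varphi}$. The analysis of item ii applied to $m_{\ti w, \ti \varphi}$ shows that non-zero vectors contained in a single ray exist only for cyclic shifts of $\ti w^\infty$, forcing $w^\infty \sim \ti w^\infty$; since $w$ and $\ti w$ both lie in $W_d$ this gives $w = \ti w$. Writing $U e_1 = c \ti e_1$ for some scalar $c$ of modulus $1$ and comparing $U(p_d e_1) = U(\varphi e_1) = \varphi c \ti e_1$ with $p_d(U e_1) = \ti \varphi c \ti e_1$ yields $\varphi = \ti \varphi$. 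The main technical subtlety throughout is careful bookkeeping with the reversal convention for word-to-operator correspondence; primeness of $w$ is the essential feature ensuring the required non-degenerate cyclic behaviour in all three items.
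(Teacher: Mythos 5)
Your proof is correct and follows essentially the same approach as the paper: the same partial-shift analysis with primeness of $w$ for irreducibility in (i), building an orthogonal cyclic family from a vector contained in the periodic ray in (ii), and tracking where contained vectors are sent by the unitary intertwiner in (iii). The only cosmetic differences are that in (ii) you obtain the scalar as an eigenvalue of the (unitary) period map on $V_p$ where the paper asserts $q_d\xi=\lambda\xi$ directly from irreducibility, and in (iii) you conclude $\varphi=\ti\varphi$ by evaluating on $e_1$ rather than by the paper's determinant trick.
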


\begin{proof}
\textit{Proof of i.} Let $\xi=\sum_{i=1}^d \alpha_i e_i\in \C^d$ be a non-zero vector and let $k$ be the smallest number such that $\alpha_k\neq 0$. Define $\ti w$ to be the cyclic permutation of $w$ so that the first digit of $\ti w$ is the $k$th digit of $w$.  Of the basis elements $\{e_i\}_i$, only $e_k$ is contained in the ray $\ti w^\infty$. Thus, $\ti w\xi = \alpha_k \varphi e_k$. We now apply $(A_w+B_w)D_\varphi$ which is the shift map times $D_\varphi$. We obtain all the basis elements $e_j$ proving that $m_{w,\varphi}$ is irreducible.
	
\textit{Proof of ii.}
Consider an irreducible and atomic P-module $\fH$. By Proposition \ref{prop:atom-decomp-rays} there exists $v\in W_d$ such that $\fH=\fH_{\atom}^{[q]}$ where $d \leq \dim(\fH)$ and $q=v^\infty$.
Since $\fH\neq\{0\}$ there exists $\xi$ in it of norm one. Up to applying a sub-word of $q$ to $\xi$ we may assume that $\xi$ is contained in $q$. Lemma \ref{lem:vec-rays-ortho} implies that $\Xi:=\{\xi, q_1\xi,\cdots, q_{d-1}\xi\}$ are pairwise orthogonal and of norm one. Moreover, $q_d\xi$ is orthogonal to $q_i\xi$ for $i = 1, \dots, d-1$. 
Irreducibility of $\fH$ forces to have $q_d\xi=\lambda\xi$ for some $\lambda\in S_1$. Hence, $\Xi$ is an orthonormal basis of $\fH$. 
By taking matrices over $\Xi$ we obtain $\fH\simeq m_{v,\lambda}.$
	
	\textit{Proof of iii.} Let $U : C^{\vert w \vert} \rightarrow C^{\vert \ti w \vert}$ be a unitary intertwinner between $m_{w, \varphi}$ and $m_{\ti w, \ti\varphi}$. 
	Write $d$ for $|w|$ and note that $d=|\ti w|$ since $U$ is unitary.	
	If $\xi \in \C^{d}$ is contained in a ray $p$ then the intertwinning conditions gives that $U\xi$ must also be contained in $p$. This immediately implies that $\ti w$ is some cyclic permutation of $w$ and thus $w = \ti w$ by definition of $W$. 
	Since $A_w+B_w$ is the shift operator $S$ we deduce that $US D_\varphi = SD_{\ti \varphi}U$.	
	Taking the determinant yields $\varphi=\ti \varphi.$
\end{proof}

\begin{corollary} \label{cor:atom-p-mod-class}
	The set of equivalence classes of irreducible atomic P-modules is in bijection with $W \times S_1$. Geometrically, this is a disjoint union of circles indexed by $W$.
\end{corollary}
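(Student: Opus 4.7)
The plan is to observe that this corollary is essentially a direct repackaging of Proposition \ref{prop:atom-p-mod-classif}, together with the structure of the indexing set $W$. So the proof will be short and the main content is already established.

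First, I would define the map
\[\Psi : W \times S_1 \to \{\text{equivalence classes of irreducible atomic P-modules}\},\quad (w,\varphi) \mapsto [m_{w,\varphi}].\]
By Proposition \ref{prop:atom-p-mod-classif}(i), each $m_{w,\varphi}$ is indeed irreducible and atomic, so $\Psi$ is well-defined. Surjectivity of $\Psi$ is exactly the content of Proposition \ref{prop:atom-p-mod-classif}(ii): any irreducible atomic P-module $\fH$ is equivalent to some $m_{v,\lambda}$ with $v \in W_{\dim(\fH)}$ and $\lambda \in S_1$. Injectivity of $\Psi$ is exactly Proposition \ref{prop:atom-p-mod-classif}(iii), which says that $m_{w,\varphi} \simeq m_{\ti w,\ti\varphi}$ forces $(w,\varphi) = (\ti w,\ti\varphi)$. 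Together these yield that $\Psi$ is a bijection.

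For the geometric description, I would simply recall that by definition $W = \bigsqcup_{d \geq 1} W_d$ where each $W_d$ is a finite set of representatives of prime binary words of length $d$ modulo cyclic permutation. Hence
\[W \times S_1 \;=\; \bigsqcup_{d \geq 1} \bigsqcup_{w \in W_d} \{w\} \times S_1,\]
which is a (countable) disjoint union of circles, one for each element of $W$.

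There is essentially no obstacle here: the entire substantive content — namely constructing the representatives $m_{w,\varphi}$, showing they are irreducible atomic, showing every irreducible atomic P-module is of this form, and showing distinct parameters give inequivalent modules — has already been done in Proposition \ref{prop:atom-p-mod-classif}. The corollary merely assembles these three facts into a bijection statement and remarks on the evident geometry of the parameter space $W \times S_1$.
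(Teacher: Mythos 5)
Your proposal is correct and matches the paper's intent: the corollary is stated without a separate proof precisely because it is the immediate assembly of items i--iii of Proposition \ref{prop:atom-p-mod-classif} (well-definedness, surjectivity, injectivity of $(w,\varphi)\mapsto[m_{w,\varphi}]$), plus the observation that $W\times S_1$ is a disjoint union of circles. Nothing is missing.
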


\begin{remark}
One could define a more general class of atomic P-modules by
	\[m_{w,D} = (A_wD, B_wD, \C^d)\]
where $D$ is \textit{any} $d$ by $d$ diagonal unitary matrix. However, we do not obtain more irreducible classes of P-modules.
Indeed, if $U$ is the unitary diagonal matrix with $j$th coefficient $\prod_{l=1}^{j-1} D_{l,l}$, then $U$ conjugates $m_{w,D}$ with $m_{w,\varphi}$ where $\varphi=\det(D)$.
\end{remark}

\subsection{Examples of decomposition of Pythagorean representations} \label{subsec:ex-atom-p-rep}

To conclude this section we provide some instructive examples of the decomposition of P-modules.

\begin{example}
	Consider the P-module $m = (A,B,\C^4)$ with $A,B$ given by the below matrices
	\[A = 
	\begin{pmatrix}
		1 & 0 & 0 & 0 \\
		0 & 0 & i & 0 \\
		0 & 0 & 0 & \frac{1}{\sqrt{2}} \\
		0 & 0 & 0 & 0
	\end{pmatrix}
	, \quad B = 
	\begin{pmatrix}
		0 & 0 & 0 & \frac{1}{\sqrt{2}} \\
		0 & 0 & 0 & 0 \\
		0 & -i & 0 & 0 \\
		0 & 0 & 0 & 0
	\end{pmatrix}.
	\]
The vector $e_1$ is contained in the ray $\ell := 0^\infty$ which is the ray going down the left side of $t_\infty$ while $e_2, e_3$ are contained in the zig-zag rays $p:=(10)^\infty$ and ${}_1p=(01)^\infty$, respectively. 
Hence, $\C e_1$ and $\C e_2 \oplus \C e_3$ are atomic sub-modules of $\C^4$. However, since $Ae_4 = 1/\sqrt{2}e_3$ and $Be_4 = 1/\sqrt{2}e_1$, $\C e_4$ is not a sub-module. Therefore, we obtain the following decomposition: 
	\[\C^4 = (\C^4)_{\atom}^{[\ell]} \oplus (\C^4)_{\atom}^{[p]} \oplus (\C^4)_{\res},\]
	where 
	\[(\C^4)_{\atom}^{[\ell]} = \C e_1,\ (\C^4)_{\atom}^{[p]} = \C e_2 \oplus \C e_3,\ (\C^4)_{\res} = \C e_4.\]
	By Proposition \ref{prop:atom-p-mod-classif} we obtain that the complete sub-module of $m$ is equivalent to $m_{0, \varphi} \oplus m_{10, \varphi}$ where $\varphi$ is the scalar $1\in S_1$. By Proposition \ref{prop:complete-mod-rep} we deduce 
	\[\Pi(m) \cong \Pi(m_{0, \varphi}) \oplus \Pi(m_{10, \varphi}).\]	
	The following section will provide a precise classification for the above sub-representations.
\end{example}

\begin{example}
	Consider the P-module $m = (A,B,\C^4)$ where	\[A =
	\begin{pmatrix}
		\frac{1}{\sqrt{2}} & \frac{1}{2} & 0 & 0 \\
		0 & \frac{1}{2} & 0 & 0 \\
		0 & 0 & 0 & 0 \\
		0 & 0 & 1 & 0
	\end{pmatrix}
	,\ B = 
	\begin{pmatrix}
		\frac{1}{\sqrt{2}} & -\frac{1}{2} & 0 & 0 \\
		0 & \frac{1}{2} & 0 & 0 \\
		0 & 0 & 0 & 0 \\
		0 & 0 & 0 & 1
	\end{pmatrix}
	.\]
	We have that
	\[(\C^4)_{\comp} = \C e_1 \oplus \C e_4,\ (\C^4)_{\res} = \C e_2 \oplus \C e_3.\]
	Additionally, $(\C^4)_{\diff} = \C e_1$ and $(\C^4)_{\atom} = \C e_4$ as the vector $e_4$ is contained in the ray $r := 1^\infty$. Note that $\C e_1 \oplus \C e_2$ forms a sub-module of $m$ and is diffuse, but is not equal to \textit{the} diffuse sub-module of $\C^4$ since it contains a residual subspace. Similarly, $\C e_3 \oplus \C e_4$ forms a sub-module of $m$ and is atomic with $e_3$ being contained in the ray $0\cdot r=0\cdot 1^\infty$. However, $\C e_3\subset (\C^4)_{\res}$. 
	The sub-module $\C e_4$ is equivalent to $m_{w, \varphi}$ where $w$ is the prime word $1$ and $\varphi$ the scalar $1$. Hence, we obtain:
	\[\Pi(m) \cong \Pi(m_{w, \varphi}) \oplus \Pi(1/\sqrt{2}, 1/\sqrt{2}, \C).\]
	The diffuse representation $\Pi^F(1/\sqrt{2}, 1/\sqrt{2},\C)$ is the Koopman representation of $F\act [0,1]$ as explained in \cite[Section 6.2]{Brothier-Jones19a}.
\end{example}

%%%%%%%%%%%%%%%%%%%%END STRUCTURE%%%%%%%%%%%%%%%%%%%%%%%%%%%%%%%%%
%%%%%%%%%%%%%%%%%%%%%%%%%%%%%%%%%%%%%%%%%%%%%%%%%%%%%%%%%%%

%%%%%%%%%%%%%%%%%%%%%%%%%%%%%%%%%%%%%%%%%%%%%%%%%%%%%%%%%%%%%%%%%%
%%%%%%%%%%%%%%%%%CLASSIFICATION%%%%%%%%%%%%%%%%%%%%%%%%%%%%%%%%%

\section{Classification of Atomic Pythagorean Representations} \label{rep from rays section}

In this section we describe and classify the atomic part of arbitrary P-representations. For this section $X$ denotes any of the Thompson groups $F,T,V$ unless specified otherwise.

\subsection{Family of one-dimensional representations of the Thompson groups and their parabolic subgroups.} \label{subsec:atomic-rep-fam}
Recall the family of parabolic subgroups $X_p:=\{g\in X:\ g(p)=p\} \subset X$ given by periodic rays $p=w^\infty$ and let $|p|$ be the length $w$. Each of these subgroups are proper except for $F_{0^\infty}, F_{1^\infty}$ which are both equal to $F$. 
Define the family $\{\chi_\varphi^{X,p}\}_{\varphi \in S_1}$ of one-dimensional representations of $X_p$ given by
\[\chi_\varphi^{X,p}(g) = \varphi^{\log(2^{\vert p \vert})(g'(p))} \textrm{ for all $g \in X_p$}\]
where $\log(2^{\vert p \vert})$ is the logarithm function in base $2^{\vert p \vert}$ and recall the definition of the derivative from Section \ref{subsec:F-def}. 
To lighten notation, we shall drop the super-script $X$.

We consider the following class of monomial representations of $X$:
\[\{\Ind_{X_p}^X \chi_\varphi^p: \varphi \in S_1 \textrm{ and } p \textrm{ is an eventually periodic ray}\}.\]
Each of the representations are irreducible by Lemma \ref{lem:monomial-rep-class}. 
When $(X,\vert p \vert) \neq (F,1)$, then the above representations are infinite-dimensional since $X_p\subset X$ has infinite index and otherwise $\Ind_{F_p}^F \chi_\varphi^p = \chi_\varphi^p$ is one-dimensional. 
When $(X,\vert p\vert) \neq (F,1)$, then the equivalence class of $\Ind_{X_p}^X \chi_\varphi^p$ only depends on the ray $p$ up to finite prefixes and $\varphi$ (see Lemma \ref{lem:monomial-rep-class}).

\begin{definition} \label{monomial rep in Pythag rep def}
Write $\ell:=0^\infty,r:=1^\infty$ for the endpoints of $\cC$ and let $d\geq 1$ be a natural number.
Define:
\begin{align*}
R^F_{1} &= \{ \chi_\varphi^\ell \oplus \Ind_{F_{1\cdot \ell }}^F\chi_\varphi^{1\cdot\ell}:\ \varphi \in S_1\} 
\cup \{ \chi_\varphi^r \oplus \Ind_{F_{0\cdot r }}^F\chi_\varphi^{0\cdot r}:\ \varphi \in S_1, \varphi \neq 1 \},\\
R^X_{d} &= \{ \Ind_{X_p}^X\chi_\varphi^p:\ p = w^d \textrm{ for } w \in W_d,\ \varphi \in S_1\},\ (X,d) \neq (F,1).
\end{align*}
\end{definition}

\subsection{Decomposition of atomic P-representations} \label{subsec:atom-rep-decomp}
Propositions \ref{prop:atom-decomp-rays} and \ref{prop:atom-p-mod-classif} showed that all complete atomic P-modules can be decomposed into a finite direct sum of irreducible atomic P-modules of the form $m_{w,\varphi}$ for some $w \in W, \varphi \in S_1$. Hence, to classify all atomic representations, it is sufficient to only classify the class of representations $\{\sigma^Y_{w, \varphi}\}_{w \in W, \varphi \in S_1}$ where $\sigma^Y_{w, \varphi} := \Pi^Y(m_{w, \varphi})$ and $Y = F,T,V,\cO$.

\begin{theorem} \label{thm:atom-rep-class}
	Take $X = F,T,V$, $\varphi \in S_1$, $w \in W$ and set $p := w^\infty$. We have the following.
	\begin{enumerate}[i]
		\item If $(X,\vert w \vert) \neq (F,1)$ then 
		\[\sigma^X_{w, \varphi} \cong \Ind_{X_p}^X \chi_{\varphi}^p.\]
		\item If $(X, \vert w \vert) = (F,1)$ then 
		\[\sigma^F_{w, \varphi} \cong \chi_\varphi^p \oplus \Ind_{F_{q}}^F \chi_{\varphi}^q\]
		where $q=1\cdot \ell$ if $w=0$ and $q=0\cdot r$ when $w=1$.
		\item A representation of $X$ is atomic if and only if it is a finite direct sum of ones belonging in $R^X_{d}$ for $d \geq 1$.
		\item If $\Pi^X(\fH) \cong \oplus_{d,j} \pi_{d,j}$ with $\pi_{d,j} \in R^X_d$ and $j$ is in some index set $J_d$, then $\sum_{d,j} d = \dim(\fH_{\atom})$. 
	\end{enumerate}
\end{theorem}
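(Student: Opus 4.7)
The plan is to prove (i) and (ii) by exhibiting explicit cyclic eigenvectors in $\scrH$ whose $X$-orbits realise the relevant monomial representations, then to deduce (iii) and (iv) formally from the decomposition theory already developed. Fix $w\in W_d$, $\varphi\in S_1$, set $p=w^\infty$, and let $e_1,\dots,e_d$ be the standard basis of $\C^d$. A direct computation from Notation~\ref{not:atom-p-mod-ray} shows that $e_1$ is contained in $p$ with $p_n\cdot e_1=\varphi^{\lfloor n/d\rfloor}e_{(n\bmod d)+1}$, while $u\cdot e_1=0$ for any word $u$ that is not a prefix of $p$.

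First, I would check that $[I,e_1]\in\scrH$ is a $\chi_\varphi^p$-eigenvector for $X_p$. Given $g=[t,\kappa,s]\in X_p$, let $\omega=p_n$ (at position $i_0$ in $s$) and $\nu=p_m$ be the unique leaves of $s,t$ lying on $p$; then $n\equiv m\pmod d$ and $g'(p)=2^{n-m}$, so $\log_{2^d}(g'(p))=(n-m)/d$. Rewriting $[I,e_1]=[s,\Phi(s)e_1]$, the atomic formula above forces $\Phi(s)e_1$ to vanish outside position $i_0$ with value $\varphi^{\lfloor n/d\rfloor}e_{(n\bmod d)+1}$; applying $\sigma^X(g)$ relocates this entry onto $\nu$, and comparing with the analogous rewrite $[I,e_1]=[t,\Phi(t)e_1]$ yields $\sigma^X(g)[I,e_1]=\varphi^{(n-m)/d}[I,e_1]=\chi_\varphi^p(g)[I,e_1]$. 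For $g\notin X_p$ the image is supported on a leaf of $t$ distinct from $\nu$, hence orthogonal to $[I,e_1]$. Combined with irreducibility of $\Ind_{X_p}^X\chi_\varphi^p$ (Lemma~\ref{lem:monomial-rep-class}), Frobenius reciprocity yields an isometric $X$-equivariant embedding $\Ind_{X_p}^X\chi_\varphi^p\hookrightarrow\sigma^X_{w,\varphi}$.

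The main obstacle is surjectivity of this embedding in (i), equivalently that the $X$-cyclic subspace generated by $[I,e_1]$ exhausts $\scrH$. Since $\scrH=\sub{\fH}$ is spanned by $\{\tau_\nu^*[I,e_j]:\nu\in\Ver,\ 1\leq j\leq d\}$, it suffices to realise each such vector, up to non-zero scalar, as $\sigma^X(g)[I,e_1]$. Given $\nu$ and $j$, pick $n$ large with $n\equiv j-1\pmod d$, a target tree $t$ containing $\nu$ at some leaf position $i$, and a source tree $s$ of the same size whose $i$-th leaf is $p_n$; then $g=[t,\kappa,s]$ with $\kappa$ suitably chosen sends $[I,e_1]$ to $\varphi^{\lfloor n/d\rfloor}\tau_\nu^*[I,e_j]$. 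For $V$ the permutation freedom makes this straightforward, and for $T$ cyclic permutations suffice. The technical heart is $F$ with $d\geq 2$, where $\kappa=\id$ forces the position $i$ to be the same in $t$ and $s$: here we use that $p_n$ lies strictly inside $\cC$ for $n$ large (since $w$ is prime of length $\geq 2$), so by refining $t$ sufficiently on either side of $\nu$ we obtain enough flexibility to build the required sdp's on either side of $p_n$ in $s$.

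For (ii), with $X=F$ and $w=0$ (the case $w=1$ being symmetric), we have $F_p=F$, so $\C[I,e_1]$ directly realises $\chi_\varphi^p$. Set $\eta:=\tau_1^*[I,e_1]$ and $q:=1\cdot\ell$; the same template gives $\sigma^F(g)\eta=\chi_\varphi^q(g)\eta$ for $g\in F_q$. Crucially, for arbitrary $g\in F$ the image $\sigma^F(g)\eta$ is always supported at a leaf not of the form $0^k$, because any source tree with a leaf on $q$ automatically has at least one leaf in $I_0$ to its left; this forces orthogonality to $\C[I,e_1]$. Hence the smallest closed $F$-invariant subspace containing $\eta$ is equivalent to $\Ind_{F_q}^F\chi_\varphi^q$ (using irreducibility of the latter), and splitting the spanning family $\{\tau_\nu^*[I,e_1]\}_{\nu\in\Ver}$ according to whether $\nu$ lies on $p=0^\infty$ yields the orthogonal direct sum $\sigma^F_{w,\varphi}\cong\chi_\varphi^p\oplus\Ind_{F_q}^F\chi_\varphi^q$. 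Parts (iii) and (iv) follow formally: by Theorem~\ref{thm:atomic-diff-decomp} and Proposition~\ref{prop:atom-p-mod-classif}, any atomic P-module decomposes as $\fH_{\atom}\cong\bigoplus_i m_{w_i,\varphi_i}$ with $\sum_i|w_i|=\dim\fH_{\atom}$, whence $\Pi^X(\fH)$ is a finite direct sum of elements of $\bigcup_dR^X_d$ by (i)--(ii); conversely any such finite sum comes from an atomic P-module and is atomic, giving (iii). Invariance of $\sum d$ in (iv) uses that the $R^X_d$ are pairwise disjoint up to equivalence: for $(X,d)\neq(F,1)$ the period length is a Mackey invariant via Lemma~\ref{lem:monomial-rep-class}(ii), while elements of $R^F_1$ are reducible and cannot coincide with the irreducible elements of $R^X_{d'}$ for $d'\geq 2$.
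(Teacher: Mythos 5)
Your strategy is the same as the paper's: take $e_1$ (resp.\ $\tau_1^*\xi$ in the $(F,1)$ case) as a $\chi_\varphi^p$-eigenvector for $X_p$, show its matrix coefficient vanishes off $X_p$, identify the cyclic subspace it generates with the monomial representation, prove cyclicity via transitivity of $X$ on suitable standard dyadic intervals, and deduce (iii)--(iv) formally from the module classification. Two steps need repair, though. First, your justification of $\langle\sigma(g)e_1,e_1\rangle=0$ for $g\notin X_p$ (``the image is supported on a leaf of $t$ distinct from $\nu$'') fails in one of the two sub-cases: if the leaves $p_n$ of $s$ and $p_m$ of $t$ lying on the ray \emph{are} corresponding leaves of $g$ but $d\nmid(n-m)$ (cf.\ Equation \ref{parabolic subgroup condition eqn}), then $g\notin X_p$ and yet $\sigma(g)e_1$ is supported exactly at the ray-leaf $p_m$. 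Orthogonality still holds, but for a different reason: $\langle p_ne_1,p_me_1\rangle=0$ because these vectors are contained in distinct rays (Lemma \ref{lem:vec-rays-ortho}); this is precisely the second half of the paper's Claim 3 and must be included. Relatedly, ``Frobenius reciprocity'' is not what yields the embedding $\Ind_{X_p}^X\chi_\varphi^p\hookrightarrow\sigma^X_{w,\varphi}$ (unitary induction from an infinite-index subgroup admits no such adjunction); what does the work is exactly the pair of facts you verify, eigenvector property plus vanishing of the coefficient off $X_p$, via the GNS argument for the positive-definite function $g\mapsto\langle\sigma(g)e_1,e_1\rangle$. Once the missing sub-case is supplied, that step is sound.

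Second, in the cyclicity argument for $F$ with $d\geq 2$, ``refining $t$ on either side of $\nu$'' breaks down when the target vertex $\nu$ is of the form $0^k$ or $1^k$: such a $\nu$ is the extreme leaf of every tree containing it, while $p_n$ is not extreme in $s$ for $n$ large, so no $g\in F$ with $\kappa=\id$ can have $p_n$ and $\nu$ as corresponding leaves. This is repaired by first rewriting, using the defining relations of $\scrH$ and the fact that $e_j$ is contained in the ray ${}_{j-1}p$,
\[\tau_\nu^*(e_j)=\tau_{\nu\cdot({}_{j-1}p)_k}^*\bigl(({}_{j-1}p)_k\,e_j\bigr),\]
choosing $k$ large enough that the new vertex contains both digits (possible since $w$ is prime of length at least $2$), after which your transitivity argument applies verbatim; this is the same transitivity statement the paper's Claim 1 relies on. With these two local fixes, parts (i)--(ii) match the paper's proof, and your deduction of (iii)--(iv) from Theorem \ref{thm:atomic-diff-decomp}, Proposition \ref{prop:atom-p-mod-classif} and the pairwise inequivalence of the families $R^X_d$ is the paper's argument as well.
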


\begin{proof}
	Most of the proof consists in showing the first two statements. This will be achieved by finding suitable cyclic vectors and by comparing matrix coefficients.
	Fix $d\geq 1$ and consider an atomic P-module $m_{w, \varphi}=(A_wD_\varphi,B_wD_\varphi,\C^d)$ for $(w,\varphi)\in W_d\times S_1$ and take $X$ in $F,T,V$. Hence, $|w|=d$.
	Recall that $(e_1,\dots,e_d)$ is the standard basis of $\C^d$.

	{\bf We first assume that $(X,d)\neq (F,1)$.}
	Define $p := w^\infty$ and $\sigma := \sigma_{w, \varphi}^X$.
	Here we identify $w$ with a vertex in $\Ver$.
	Observe that $w$ lies in the ray $p$ and by definition of $m_{w, \varphi}$ we have
	$$\tau_{w^n}(e_1) = \varphi^n e_1 \textrm{ and } \tau^*_{w^n}(e_1) = \varphi^{-n}e_1 \text{ for all } n \geq 1.$$
		
	\textbf{Claim 1.} The vector $e_1$ is cyclic for $\sigma$. 
	
	It is suffice to show that $\tau_v^*(e_j) \in \Span\{\sigma(X)e_1\}$ for all words $v$ and all $1\leq j\leq d$. 
	Fix such $v$ and $j$. 
	Note that $e_1=\varphi\tau_w^*(e_1)$ and moreover there exists a sub-word $x$ of $w$ and a scalar $\lambda\in S_1$ such that $e_1=\tau_x(e_j).$
	Moreover, if $g=[t,\kappa,s]\in T$ and $y,w$ are corresponding leaves, then 
	$$\sigma(g)(e_1) = \tau_{y}^* \tau_w \varphi \tau_w^*(e_1) = \varphi \tau_y^*(e_1)=\varphi\lambda \tau_y^*\tau_x(e_j).$$
	Since $T$ acts transitively on the non-trivial sdi's and $w$ is non-trivial we can choose $y$ to be anything we want.
	Taking $y:=vx$ yields the result.
	This proves the $T$-case. The $V$-case follows since $T\subset V$.
	Now, for the $F$-case we proceed in the same way. Since $|w|\neq 1$ and is prime we must have that $w$ does not lie on any of the two endpoints of $\cC$.
	We then use the fact that $F$ acts transitively on sdi's that do not contain an endpoint of $\cC$.

	Denote $\theta_p$ to be the cyclic representation of $X_p$ given by the sub-representation of $\sigma\restriction_{X_p}$ (the restriction of $\sigma$ to the subgroup $X_p$) generated by $e_1$.

	\textbf{Claim 2:} The representation $\theta_p$ of $X_p$ is equivalent to $\chi_{\varphi}^p$.
	
	We shall make use of the description of $X_p$ using tree-diagrams as discussed in Section \ref{parabolic desc subsection}.
Let $g := [t,\kappa,s] \in X_p$. Since $g(p)=p$ there exists $i,j\geq 1$ such that $g$ restricts into $p_i\cdot z\mapsto p_j\cdot z.$ Moreover, we must have ${}_ip={}_jp$ implying that $|i-j|= |w|n$ for some $n\in\Z.$
We deduce that $\sigma(g)e_1=\varphi^n e_1.$
Hence, $\theta_p$ is one-dimensional and equivalent to $\chi_\varphi^p$.
	
	Now consider $g := [t,\kappa,s] \in X$ and set $\Leaf(t) = \{\nu_i\}_{i \in I}$ and $\Leaf(s) = \{\omega_i\}_{i \in I}$. Let $\nu_k, \omega_l$ be the leaves of $t$ and $s$, respectively, which the ray $p$ passes through. Denote 
	$$\phi : X\to \R, \  g \mapsto \langle \sigma(g)e_1, e_1\rangle$$ the associated matrix coefficient.
	
	\textbf{Claim 3:} If $g \notin X_p$, then $\phi(g) = 0$.
	
	By the discussion in Section \ref{parabolic desc subsection}, $g \notin X_p$ if and only if $k \neq l$ (equivalently, $\nu_k$ and $\omega_l$ are not corresponding leaves of $g$) or $k = l$ and Equation \ref{parabolic subgroup condition eqn} is not satisfied. First suppose $k \neq l$. 	
	Then by Equation \ref{action rearrange equation}:
	\begin{align*}
		\phi(g) &= \langle \sigma(g)e_1, e_1 \rangle 
		= \sum_{i \in I} \langle \tau_{\omega_i}(e_1), \tau_{\nu_i}(e_1) \rangle \\
		&= \sum_{i \neq k,l} \langle \tau_{\omega_i}(e_1), \tau_{\nu_i}(e_1) \rangle + \langle \tau_{\omega_k}(e_1), \tau_{\nu_k}(e_1) \rangle + \langle \tau_{\omega_l}(e_1), \tau_{\nu_l}(e_1) \rangle.  
	\end{align*}
	From Observation \ref{contained vector norm obs}, $\tau_{\nu_i}(e_1) = 0$ for $i \neq k$ since $\nu_i \notin \Ver_{p}$. Similarly $\tau_{\omega_j}(e_1) = 0$ for $j \neq l$. Thus, each of the terms in the above equation is $0$ and we obtain $\phi(g) = 0$. 
	
	Then suppose $k = l$ and Equation \ref{parabolic subgroup condition eqn} is not satisfied for $m = \length(\nu_k)$ and $n = \length(\omega_l)$. Subsequently, $\nu_k, \omega_l$ are corresponding leaves and $m-n \notin d\N$. Then by a similar reasoning as before we have
	\begin{equation*}
		\phi(g) = \sum_{i \neq k} \langle \tau_{\omega_i}(e_1), \tau_{\nu_i}(e_1) \rangle + \langle \tau_{\nu_k}(e_1), \tau_{\omega_k}(e_1) \rangle = \langle p_ne_1, p_me_1 \rangle.
	\end{equation*}
	Since $m-n \notin d\N$ and the length of a period of $p$ is $d$ it follows that $p_ne_1, p_me_1$ are vectors contained in different rays. Therefore $\langle p_ne_1, p_me_1 \rangle = 0$ by Lemma \ref{lem:vec-rays-ortho} and thus $\phi(g) = 0.$ 

	The three claims yield item (i).
	
	{\bf We shall now treat the remaining case $(X,d) = (F,1)$.}
	There are only two cases to consider here: $w = 0$ or $w = 1$. 
	We consider the first. The second one follows via a similar proof.	
	Set $p = \ell=0^\infty$ and $q = 1\cdot \ell=1\cdot 0^\infty$. Note now that $m_{w,\varphi}=(\varphi,0,\C)$.
	Write $\xi$ for a unit vector of $\C$. The dense subspace $\scrK$ of $\scrH$ are then trees with leaves decorated by scalars. 
	Given $g=[t,s]\in F$ we have that 
	$$\sigma(g) \xi=(\tau_0^k)^* \tau_0^n \xi=\varphi^{n-k}\xi$$ where $k=|y|,n=|x|$ and $y,x$ are the first leaves of $t,s$, respectively. 
	We deduce that $\scrH_1:=\C \xi\subset \scrH$ defines a sub-representation $\sigma_1$ of $\sigma$ equivalent to $\chi_\varphi^p$.\\
	Consider now the vector $\tau_1^*(\xi)$. It is orthogonal to $\xi$ inside $\scrH$ and thus span a sub-representation $(\sigma_2,\scrH_2)\subset (\sigma,\scrH)$  so that $\scrH_1\perp \scrH_2.$
	Consider any word $v\neq 0^n$ for all $n$.
	By transitivity of the action of $F$ on sdi's not containing endpoints there exists $g=[t,s]\in F$ where $v\cdot 0$ and $10$ are corresponding leaves of $(t,s)$.
	Then, $\sigma(g) \tau_1^*(\xi) = \tau_v^*(\xi)$.
	The space $\scrH$ is the closed linear span of the $\tau_u^*(\xi)$ with $u$ any word.
	Moreover, note that $\tau_0^*(\xi)=\varphi^{-1} \xi$.
	From there we deduce that $\scrH=\scrH_1\oplus\scrH_2.$
	By slightly adapting the proof of item (i) we deduce that $\sigma_2\simeq \Ind_{F_q}^F\chi_\varphi^q$. 
	This yields item (ii).

	The third and fourth statements follow from the fact that: P-functors preserve direct sums, the classification of all atomic irreducible P-modules from Proposition \ref{prop:atom-p-mod-classif}, and the first two statements of the theorem.
\end{proof}

We can now prove several useful corollaries. 

\begin{proof}[Proof of Corollary \ref{cor:intro-classif-atom-general}]
	Take $X = F,T,V$, $\varphi_i \in S_1$, $w_i \in W$ and set $p_i := w_i^\infty$ for $i = 1,2$. It is elementary to show that all the parabolic subgroups $X_p \subset X$ are self-commensurated. Hence, by the Mackey--Shoda criterion we have that $\Ind_{X_p}^X \chi_{\varphi}^{p}$ is irreducible. Moreover, the representations $\Ind_{X_p}^X \chi_{\varphi_1}^{p_1}, \Ind_{X_p}^X \chi_{\varphi_2}^{p_2}$ are equivalent if and only if either $(w_1, \varphi_1) = (w_2, \varphi_2)$ or $X = F$ and $(\vert w_i \vert, \varphi_i) = (1,1)$ for $i = 1,2$ (note we only require $w_2$ to be a cyclic permutation of $w_1$; however, by definition of $W$ this implies $w_1 = w_2$). Then the corollary for $F,T,V$, and thus $\cO$, immediately follows from Proposition \ref{prop:complete-mod-rep} and Theorem \ref{thm:atom-rep-class}.
\end{proof}

Recall that a representation of a group $G$ is weakly mixing (resp.~Ind-mixing) when it does not contain any (resp.~induction of a) non-zero finite-dimensional representation.

\begin{proof}[Proof of Corollary \ref{cor:intro-ind-weak-mix}]
	Consider a P-module $m=(A,B,\fH)$ and $X=F,T,V$.
	Using the main results of our previous article we only need to prove the two reverse implications that is: $\Pi^X(m)$ weak-mixing (resp.~$\NInd$) implies either $X=T,V$ or $\lim_nA^n\xi=\lim_nB^n\xi=0$ (resp.~$\lim_n p_n\xi=0$ for all rays $p$) for all vectors $\xi\in\fH$ \cite{Brothier-Wijesena22}.
	
	Assume $X=F$ and there exists $\xi\in\fH$ so that $\lim_nA^n\xi\neq 0$. 
	Then the proof of Proposition \ref{prop:dichotomy convergence prop} implies there exists a vector $\eta \in \fH$ contained in the ray $\ell=0^\infty$. Then from Theorem \ref{thm:atom-rep-class} we obtain that $\Pi^F(m)$ contains a one-dimensional representation $\chi_{\varphi}^\ell$. Hence, $\Pi^F(m)$ is not weakly mixing. 
	A similar proof works by swapping $A$ by $B$ and the endpoints of $\cC$. 
	For $X = T,V$ Theorem \ref{thm:atom-rep-class} shows that $\Pi^X(m)$ is always weak-mixing.
	This proves the first statement of the corollary.
	
	Assume now that there exists $\xi\in\fH$ and a ray $p$ so that $\lim_n p_n\xi \neq 0$. Applying Proposition \ref{prop:dichotomy convergence prop} again implies there exists a vector $\eta \in \fH$ contained in a periodic ray $q$. By Theorem \ref{thm:atom-rep-class}, $\Pi^X(m)$ contains a monomial representation induced from a parabolic subgroup of $X$. Therefore, $\Pi^X(m)$ is not $\NInd$. 
\end{proof}

\subsection{Manifolds of atomic representations}\label{sec:geometry}
From the above proof of Corollary \ref{cor:intro-classif-atom-general} we can deduce that for an atomic P-module $\fH$, if $\fH_{\comp} \cong m_{w, \varphi}$ for some $\varphi \in S_1, w \in W$ then $\vert w \vert$ is an invariant for $\Pi(\fH)$. That is, if $v$ is any prime word (not necessarily in $W$) and $\lambda \in S_1$ such that $\fH_{\comp} \cong m_{v, \lambda}$ then necessarily $\vert v \vert = \vert w \vert$ (i.e.~they have minimal periods of same length). 
Hence, this provides a dimension number $\dim_{\cP}(\Pi(\fH)) = \dim_{\cP}(\fH) := \vert w \vert$ for both the atomic representation and underlying P-module which we term as the Pythagorean dimension (P-dimension for short). Observe the P-dimension coincides with the usual dimension of $\fH_{\comp}$ as a complex vector space. Using the results from the previous subsection, we obtain a powerful classification result for atomic representations for each P-dimension.

Fix $d\geq 1$ and consider the Hilbert space $\C^d$ equipped with its standard basis.
Let $PM(d)$ be the set of all P-modules $(A,B,\C^d)$ where now $A,B$ are $d$ by $d$ matrices.
The group $\PSU(d)$ acts by conjugation on $PM(d)$ and by definition two P-modules are equivalent if they are in the same $\PSU(d)$-orbit.
Define now $\Irr_{\atom}(d)\subset PM(d)$ as the subset of irreducible atomic P-modules.
It is of course globally stabilised by $\PSU(d)$.
Section \ref{subsec:atomic-pmod-model} implies that $\{m_{w,\varphi}:\ (w,\varphi)\in W_d\times S_1\}$ forms a set of representatives of the orbit space of $\Irr_{\atom}(d)$.
Then Corollary \ref{cor:intro-classif-atom-general} shows that if $(X,d)\neq (F,1)$, then $\Pi^X(m_{w,\varphi})$ is irreducible for all $(w,\varphi)\in W_d\times S_1$ and moreover $\Pi^X$ preserves equivalence classes.
All together this proves Corollary \ref{lettercor:geometry}.

%%%%%%%%%%%%%%%%%%%%%%%%%%%%%%%%%%%%%%%%%%%%%%%
%%%%%%%%%%%%%%%BIBLIOGRAPHY%%%%%%%%%%%%%%%%%%%%%%

\newcommand{\etalchar}[1]{$^{#1}$}

\end{document}